\newcommand{\ddb}{\sqrt{-1}\partial\overline{\partial}}
\renewcommand{\[}{\begin{equation} \begin{aligned} }
\renewcommand{\]}{\end{aligned} \end{equation}}
\newtheorem{thm}{Theorem}
\newtheorem{prop}[thm]{Proposition}
\newtheorem{lem}[thm]{Lemma}
\newtheorem{cor}[thm]{Corollary}
\theoremstyle{definition}
\numberwithin{equation}{section}
\begin{document}
\title{Degenerations of $\mathbf{C}^n$ and Calabi-Yau metrics}

\author{G\'{a}bor Sz\'{e}kelyhidi}
\address{Department of Mathematics\\
         University of Notre Dame\\
         Notre Dame, IN 46556}
\email{gszekely@nd.edu}

\begin{abstract}
  We construct infinitely many complete Calabi-Yau metrics on
  $\mathbf{C}^n$ for $n \geq 3$, with
  maximal volume growth, and singular tangent cones at infinity. In
  addition we construct Calabi-Yau metrics in neighborhoods of certain
  isolated singularities whose tangent cones have singular cross
  section, generalizing work of Hein-Naber~\cite{HNpreprint}. 
\end{abstract}

\maketitle
\section{Introduction}
Since the seminal work of Yau~\cite{Yau78}, Calabi-Yau metrics have
been studied extensively in K\"ahler geometry. Beyond the case of
compact K\"ahler manifolds, there have been many constructions of
non-compact Calabi-Yau metrics with various behaviors at infinity,
by Cheng-Yau~\cite{CY80}, Tian-Yau~\cite{TY91, TY90} and
others. In this paper we are concerned with constructing new
non-compact Calabi-Yau manifolds that have Euclidean volume growth at
infinity. In this case there exists a tangent cone at infinity~\cite{CC97}, which
is expected to be unique~\cite{CM14, DS15}, and a natural
problem is to try constructing complete Calabi-Yau metrics with
prescribed tangent cones. 
There are many such constructions in the literature, with tangent
cones that have smooth links~\cite{vC10,Goto12,CH13, CH15}, as well as singular
links~\cite{BG97, Joy00, CDR16}. 

Our work pushes these methods further, obtaining a
large class of new examples on $\mathbf{C}^n$, for $n\geq 3$. 
In particular these give counterexamples to a conjecture of Tian~\cite[Remark
5.3]{TianAspects}, stating that the flat metric is the unique
Calabi-Yau metric on $\mathbf{C}^n$ with maximal volume growth. 
Some of these examples have very
recently been independently obtained by Li~\cite{Li17} and
Conlon-Rochon~\cite{CR17}, using somewhat different techniques. 
See Section~\ref{sec:comparison} for a
comparison with our work. In addition we also
construct Calabi-Yau metrics in neighborhoods of certain isolated
singularities, with singular tangent cones, extending unpublished
work of Hein-Naber~\cite{HNpreprint}. 

Consider the hypersurface $X_1 \subset \mathbf{C}^{n+1}$ given by the
equation
\[ z + f(x_1,\ldots, x_n) = 0, \]
for a polynomial $f$, so that $X_1$ is biholomorphic to
$\mathbf{C}^n$. Suppose in addition that if we let the $x_i$ have
weights $w_i > 0$, then $f$ has degree $d > 1$. If we write
$F_t(z, x_1,\ldots, x_n) = (tz, t^{w_1}x_1, \ldots, t^{w_n}x_n)$, then $F_t^{-1}
X_1$ has the equation
\[ t^{1-d} z + f(x_1,\ldots, x_n) = 0, \]
and so if $d > 1$, then $F_t^{-1}X_1 \to X_0$ as $t\to\infty$, where 
\[ X_0 = \mathbf{C} \times f^{-1}(0). \]
Suppose that $X_0$ admits a (singular) Ricci flat cone metric
$\omega_0$ whose homothetic transformations are the maps $F_t$. It is then natural to
expect that we can use $\omega_0$ to define an asymptotically Ricci
flat metric on $X_1$ whose tangent cone at infinity is $(X_0,
\omega_0)$, that we can perturb to a complete Calabi-Yau metric on
$X_1$ with the same tangent cone. This almost fits into the class of
problems studied by Conlon-Hein~\cite{CH13}, except for the fact that
$X_0$ has more than just an isolated singularity at the origin. 

We restrict ourselves to the simplest situation, when $V_0 = f^{-1}(0) \subset
\mathbf{C}^n$ has an isolated normal singularity at the origin, so that $X_0$
is only singular along $\mathbf{C}\times \{0\}$. In particular $n \geq
3$. If we focus
on the slice $\{1\} \times V_0 \subset X_0$ near the singular ray, 
then the corresponding slice of $F_t^{-1}X_1$ is a smoothing
\[ t^{1-d} + f(x_1,\ldots, x_n) = 0 \]
of the cone $V_0$. Our strategy is to write down a metric on $X_1$ by
combining a perturbation of the singular Calabi-Yau metric on $X_0$
with a Calabi-Yau metric on this smoothing of $V_0$ near the
singular rays. The Calabi-Yau metric on the smoothing is obtained
using the results of Conlon-Hein~\cite{CH13}. 
Our main result then is as follows.

\begin{thm}\label{thm:1}
  Under the assumption that the hypersurface
 $V_0$ admits a Calabi-Yau cone metric,
  there is a complete Calabi-Yau metric on $\mathbf{C}^n$ whose
  tangent cone at infinity is $\mathbf{C}\times V_0$. 
\end{thm}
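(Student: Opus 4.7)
The plan is to follow the two-step scheme outlined in the introduction: first construct an approximate Calabi--Yau metric $\tilde{\omega}$ on $X_1 \cong \mathbf{C}^n$ whose behavior at infinity is modeled on the singular cone $(X_0,\omega_0) = \mathbf{C}\times (V_0,\omega_{V_0})$, and second, perturb it to an honest Calabi--Yau metric by solving a complex Monge--Amp\`ere equation $(\tilde{\omega} + \ddb u)^n = \Omega \wedge \overline{\Omega}$, where $\Omega$ is the obvious holomorphic volume form on $X_1$ induced from $\mathbf{C}^{n+1}$.

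For the construction of $\tilde{\omega}$, I would split $X_1$ into three regions, parametrized by a large scale parameter $t$ controlling both the $z$-coordinate and the distance to the singular ray $\mathbf{C}\times\{0\}$ of $X_0$. In the ``cone region'', which stays a definite distance away from the singular ray, the map $F_t$ gives a biholomorphism from a large piece of $F_t^{-1} X_1$ onto a piece of $X_0$; pulling $\omega_0$ back via $F_t$ and summing such pieces yields a Ricci flat K\"ahler metric on the corresponding part of $X_1$. In the ``smoothing region'' close to the singular ray, the slice of $F_t^{-1}X_1$ transverse to the $z$-direction is exactly the smoothing $t^{1-d} + f = 0$ of $V_0$, to which Conlon--Hein~\cite{CH13} attach an asymptotically conical Calabi--Yau metric $\omega_{V}$ asymptotic to $\omega_{V_0}$. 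Taking a twisted product of $\omega_V$ with the flat metric in the $z$-direction, and rescaling by $F_t$, gives a Calabi--Yau model near the singular ray. On the intermediate ``gluing region'' the two models agree to high order in $t$ because both are asymptotic to the Calabi--Yau cone metric on $V_0$, and I would interpolate between them at the level of K\"ahler potentials using a standard cutoff construction, so that $\tilde{\omega}$ is K\"ahler on all of $X_1$ and the error $\tilde{\omega}^n / (\Omega\wedge\overline{\Omega}) - 1$ decays at a prescribed polynomial rate.

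Given $\tilde{\omega}$, the existence of a Calabi--Yau perturbation amounts to an application of the implicit function theorem for the Monge--Amp\`ere operator in suitable weighted H\"older spaces $C^{k,\alpha}_\delta$ adapted to the geometry of $(X_1,\tilde{\omega})$. The main ingredients I would need are (a) weighted Schauder estimates for the Laplacian $\Delta_{\tilde{\omega}}$ that are uniform both in the radial direction on $X_1$ and in the transverse direction toward the singular ray; (b) a Fredholm, and for generic weight $\delta$ an isomorphism, statement for $\Delta_{\tilde{\omega}}$ on a suitable pair of weighted spaces, together with a computation of the indicial roots of the model Laplacian on $\mathbf{C}\times V_0$; and (c) an a priori $C^0$ estimate for solutions of the nonlinear problem, obtained via a blow-up/maximum principle argument that uses the Calabi--Yau structure of the tangent cone and a Liouville type statement on it. Once these are in place, the nonlinear problem is solved by a standard contraction mapping, provided the error of the approximate metric has sufficiently small weighted norm, which is arranged by pushing the gluing region out to a large enough value of $t$.

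The main obstacle, as signalled by the comparison with~\cite{CH13}, is the weighted linear analysis in the presence of a non-isolated singularity of the tangent cone. Unlike the standard asymptotically conical setting, where the singular stratum consists of a single point, here the model $(X_0,\omega_0)$ is singular along the entire complex line $\mathbf{C}\times\{0\}$, so purely radial weights will not capture the correct function space behavior; one needs two-parameter weights that keep track of both the distance to the vertex and the distance to the singular ray, and an indicial root analysis carried out fiberwise along this ray. Establishing the required Fredholm theory, and in particular ruling out indicial roots in the relevant weight range, is the technical heart of the argument and the step I expect to demand the bulk of the work.
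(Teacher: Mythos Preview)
Your outline is broadly on target and you correctly identify the crux of the problem: the linear analysis in doubly-weighted spaces reflecting both the radial distance $\rho$ and the distance to the singular ray. But two structural points deserve sharpening.

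First, the nonlinear step in the paper is genuinely two-stage, and your proposal blurs this. A pure contraction mapping cannot produce the global Calabi--Yau metric, because on the compact part of $X_1$ the Ricci potential of the approximate metric is order one, not small in any weighted norm. What the paper does is use the contraction mapping only on the exterior $\{\rho > A\}$ (Proposition~\ref{prop:perturbed}), obtaining a metric $\tilde\omega$ that is \emph{exactly} Calabi--Yau outside a compact set; then it invokes Hein's non-compact existence theorem \cite[Proposition~4.1]{HeinThesis}, which is a Tian--Yau type result requiring the $SOB(2n)$ volume-growth condition, the RCA annulus-connectedness condition, and a $C^{3,\alpha}$ quasi-atlas, to upgrade $\tilde\omega$ to a global Calabi--Yau metric. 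Your item (c), an a priori $C^0$ estimate via blow-up and Liouville, is the right flavor of ingredient for a Tian--Yau argument, but it is not an input to a contraction mapping; if you intend to bypass Hein's black box you should say so and explain how.

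Second, on the linear side the paper does not establish a Fredholm package for $\Delta_{\tilde\omega}$ directly. Instead it builds an approximate right inverse by cutting $\{\rho > A\}$ into pieces modeled on \emph{two} spaces, the punctured cone $X_0\setminus(\mathbf{C}\times\{0\})$ and the product $\mathbf{C}\times V_1$, inverting the Laplacian separately on each model (Propositions~\ref{prop:RLinvert} and \ref{prop:CV1invert2}), and patching the results with cutoffs (Section~\ref{sec:Laplaceinverse}). The Liouville-type statements (Corollaries~\ref{cor:ker0}, \ref{cor:ker02}) enter here, in the linear analysis, not in the nonlinear step. This parametrix approach is not merely a stylistic choice: the paper emphasizes that when $d\le 3$ the Ricci potential decays slower than quadratically away from the singular ray, so methods relying on convolution with a Green's function (as in Degeratu--Mazzeo or your proposed direct Fredholm route) do not obviously apply; building the inverse locally from model solutions sidesteps this. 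You should either restrict to $d>3$ or explain how your Fredholm analysis handles the slow-decay regime.
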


As an example we can consider the case when $n=3$, and 
\[ f(x_1,x_2,x_3) = x_1^2 + x_2^2 + x_3^k \]
for $k \geq 2$. The hypersurfaces $f^{-1}(0)$ are the $A_{k-1}$
singularities and they all admit flat cone metrics, being cyclic
quotients of $\mathbf{C}^2$. Therefore we obtain
infinitely many complete Calabi-Yau metrics on $\mathbf{C}^3$ with
tangent cones $\mathbf{C}\times A_{k-1}$ at infinity. Simple higher
dimensional examples can be obtained by 
taking products with $\mathbf{C}$. The
corresponding metric when $k=2$ has recently been constructed by
Li~\cite{Li17} and Conlon-Rochon~\cite{CR17} independently. 
Another example in higher dimensions is obtained with 
\[ f(x_1,\ldots, x_n) = x_1^2 + \ldots + x_n^2, \]
for $n \geq 3$, i.e. the $A_1$ singularity, which admits the Stenzel
cone metric. We therefore obtain complete Calabi-Yau metrics
on $\mathbf{C}^n$ with tangent cone $\mathbf{C} \times A_1$ at
infinity. 

Consider now the hypersurface $X_1 \subset
\mathbf{C}^{n+1}$ with an isolated singularity $0\in X_1$, given by
\[ z^p + f(x_1,\ldots, x_n) = 0, \]
where $p > 1$, and $f$ is as above. With $F_t$ as before, the equation
of $F_t^{-1}X_1$ is now 
\[ t^{p-d}z^p + f(x_1,\ldots, x_n) = 0, \]
and so if $p > d$, then we have $F_t^{-1}X_1 \to X_0$ as $t\to 0$, with
$X_0 = \mathbf{C}\times V_0$. It is therefore natural to expect that
$X_1$ admits a metric which is asymptotically Calabi-Yau as we
approach the singular point, and whose tangent cone at 0 is
$\mathbf{C}\times V_0$. Using an argument that is essentially
identical to part of the proof of Theorem~\ref{thm:1}, we prove that this is the case, and
in fact this metric can be perturbed to be Calabi-Yau in a
neighborhood of the singular point. 
This result generalizes unpublished work of Hein-Naber~\cite{HNpreprint},
which applies to the case when $f = x_1^2 + \ldots + x_n^2$, and $p >
2\frac{n-1}{n-2}$, using different techniques. 
\begin{thm}\label{thm:2}
  If $p > d$, then $X_1$ admits a Calabi-Yau metric on a neighborhood
  of the singular point $0$, whose tangent cone at $0$ is
  $\mathbf{C}\times V_0$.  
\end{thm}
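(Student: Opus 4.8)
The plan is to run essentially the same gluing-and-perturbation construction used for Theorem~\ref{thm:1}, but now working near the origin rather than near infinity, so that the analysis takes place on a \emph{bounded} neighborhood and one does not need to control behavior at infinity at all. Concretely, near the singular point $0 \in X_1$ the rescaled hypersurfaces $F_t^{-1}X_1$ converge as $t\to 0$ to $X_0 = \mathbf{C}\times V_0$, which carries the singular Calabi-Yau cone metric $\omega_0$ (with cone structure given by the $F_t$). Away from the singular ray $\mathbf{C}\times\{0\}$, a small neighborhood of $0$ in $X_1$ is uniformly close to the smooth part of the cone, and there one perturbs $\omega_0$; along the singular ray, the relevant slices of $X_1$ are the smoothings $t^{p-d} + f = 0$ of the cone $V_0$, and by Conlon--Hein~\cite{CH13} each such smoothing carries a Calabi-Yau metric asymptotic to the cone metric on $V_0$. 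One then uses a partition of unity in the radial variable to glue these two model metrics into an approximately Calabi-Yau K\"ahler metric $\omega$ on a punctured neighborhood of $0$ in $X_1$.

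The first step I would carry out is to fix the holomorphic volume form: since $X_1$ is a hypersurface with an isolated singularity, the adjunction/Poincar\'e residue construction gives a holomorphic $n$-form $\Omega$ on $X_1\setminus\{0\}$, and one checks that under $F_t$ it scales with a definite weight matching the cone, so that $|\Omega|^2_{\omega_0}$ is asymptotically constant. The second step is the gluing: on the region where $|z|$ is not too small compared to the cone distance we take $\omega = \omega_0 + \ddb(\text{correction})$, and in the neck region we interpolate with the rescaled Conlon--Hein metric on the smoothing of $V_0$, arranging that $\omega$ is K\"ahler and that the Monge--Amp\`ere error $\omega^n/(\Omega\wedge\overline\Omega) - 1$ is supported in the neck and decays at a definite polynomial rate as one approaches $0$. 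The third step is the perturbation: one solves the complex Monge--Amp\`ere equation $(\omega + \ddb u)^n = e^{F}\Omega\wedge\overline\Omega$ on the neighborhood, using weighted H\"older spaces adapted to the conical geometry, via a fixed-point or continuity argument; because the domain is a bounded neighborhood of the singularity this only requires a local a priori estimate and an invertibility statement for the Laplacian of $\omega_0$ on the appropriate weighted spaces near an isolated conical singularity, both of which are available once the tangent cone is Calabi-Yau. Finally one verifies that the resulting Calabi-Yau metric has tangent cone $\mathbf{C}\times V_0$ at $0$, which follows from the construction since $\omega$ is modeled on $\omega_0$ there and the perturbation $u$ decays.

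The main obstacle, exactly as in Theorem~\ref{thm:1}, is the neck analysis: the cross-section $\mathbf{C}\times V_0$ is not an isolated cone singularity but is singular along a whole ray, so the standard weighted-space theory for isolated conical singularities does not apply directly. One must understand the geometry of the gluing region, where the model transitions from the Conlon--Hein metric on the smoothed fiber $\{t^{p-d}+f=0\}$ (which, after rescaling, looks like the cone $V_0$) to the product cone metric on $\mathbf{C}\times V_0$, and show that the interpolated metric is uniformly equivalent to the model and that its Ricci potential decays. Establishing the linear mapping properties of $\Delta_\omega$ uniformly across this neck—so that the Monge--Amp\`ere perturbation can be solved with the desired decay—is the technical heart; once that is in place the rest is a routine implicit-function-theorem argument on the bounded neighborhood, which is in fact simpler than the complete case since there is no asymptotic regime at infinity to handle.
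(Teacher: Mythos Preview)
Your proposal is correct and follows essentially the same approach as the paper: one glues the cone potential $|z|^2+r^2$ with scaled Conlon--Hein potentials $|z|^{2p/d}\phi(z^{-p/d}\cdot x)$ near the singular ray (now with cutoff parameter $\alpha\in(1,p/d)$), proves Ricci-potential decay in doubly-weighted spaces (Proposition~\ref{prop:decayest3}), and then runs the identical model-space linear analysis (Sections~\ref{sec:modelspace}--\ref{sec:Laplaceinverse}) and fixed-point argument (Proposition~\ref{prop:perturbed}) on $\rho^{-1}(0,A^{-1}]$ instead of $\rho^{-1}[A,\infty)$. Your remark that the bounded case is ``simpler'' is accurate only in that the final Tian--Yau/Hein globalization step is not needed; the linear analysis near $0$---inverting $\Delta$ in doubly-weighted spaces adapted to a cone with singular cross-section---is exactly the same difficulty as at infinity, as you yourself correctly anticipate in your last paragraph.
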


It is likely that such constructions can be applied in a much more general
setting, and we focus on these relatively explicit examples for
simplicity. In general suppose that $X_0 \subset \mathbf{C}^N$ is
a subvariety which admits a possibly singular Calabi-Yau cone metric
whose homothetic (Reeb) vector field is $\xi = \sum w_i
z_i \partial_{z_i}$ for weights $w_i > 0$. 
This vector field generates a real one-parameter
group of biholomorphisms $F_t$ of $\mathbf{C}^N$. Let $X_1 \subset
\mathbf{C}^N$ be another subvariety, and suppose that one of the
following two conditions holds:
\begin{enumerate}
  \item $\lim_{t\to\infty} F_t^{-1} X_1 = X_0$, 
  \item or $\lim_{t\to 0} F_t^{-1} X_1 = X_0$. 
\end{enumerate} 
Case (1) is the setting of Theorem~\ref{thm:1}, and here one expects
to be able to construct a Calabi-Yau metric on $X_1$ near
infinity, with tangent cone $X_0$ at infinity. 
If $X_1$ is smooth, then one can hope to deform this into a
global Calabi-Yau metric on $X_1$ using techniques of
Tian-Yau~\cite{TY91}. 
In case (2), by contrast, we necessarily have $0\in X_1$, and 
one expects to be able to contruct a 
Calabi-Yau metric on $X_1$ on a neighborhood of $0$, whose tangent
cone at $0$ is $X_0$, as in Theorem~\ref{thm:2}. It seems likely that
our methods can be generalized to prove these expectations whenever
the singular behavior of $X_0$ has enough structure, such as the
iterated edge spaces of Degeratu-Mazzeo~\cite{DM14}. 

Note that for a given $X_1$, one typically expects infinitely many
possible $X_0$ to fit in case (1) above, as in Theorem~\ref{thm:1},
but there should be at most one $X_0$ fitting into case (2). This is
because the tangent cone of a Calabi-Yau metric on $X_1$ at the
singularity is expected to be independent of the metric (see
Hein-Sun~\cite{HS16} for a special case of this).

This discussion fits into the
framework developed by Donaldson-Sun~\cite{DS15} for analyzing the
metric tangent cones of singular Calabi-Yau metrics under certain
assumptions. They show that (under additional assumptions) the metric
tangent cone $C(Y)$ at 0 of a Calabi-Yau metric on $X_1$ is an affine
algebraic variety which can be obtained from $X_1$ by a ``two step''
degeneration. In a first step we associate to $X_1$ its weighted
tangent cone $W$ at $0$ for a suitable canonical valuation at $0$. This
valuation is obtained, roughly speaking, 
from the rate of growth of germs of holomorphic functions on $X_1$
with respect to the Calabi-Yau metric. The metric tangent cone
$C(Y)$ is then obtained by a further degeneration (or test-configuration)
of $W$. One can think of these
two steps as being analogous to the Harder-Narasimhan and
Jordan-H\"older filtrations of an unstable, respectively semistable,
vector bundle. Note that in the setting of Theorem~\ref{thm:2} both $W$ and
$C(Y)$ equal $X_0$ and so it would be interesting to construct examples
where $X_1 \ne W\ne C(Y)$. 

\subsection{Outline}
We now give an outline of the proof of
Theorem~\ref{thm:1}. First, in Section~\ref{sec:smoothing1} we
 construct a complete Calabi-Yau metric
on the smoothing $V_1 = f^{-1}(1)$ of the Calabi-Yau cone $V_0$. This
is already contained in work of Conlon-Hein~\cite{CH13}, but we give
some of the main points since they are a simpler version of what we
need later. 

In Section~\ref{sec:approx1} we write down a metric $\omega$ on $X_1$, which is
approximately Calabi-Yau near infinity, modeled on the product metric
on $X_0 = \mathbf{C}\times V_0$. This product is singular along the
rays $\mathbf{C}\times \{0\}$, and so near these singular
rays we glue in suitable scaled copies of the Calabi-Yau metric on
$V_1$ using cutoff functions. The key technical result is the estimate in
Proposition~\ref{prop:decayest2} of the Ricci potential of $\omega$,
in suitable weighted H\"older spaces. 

The next step, in Section~\ref{sec:perturb}, is to modify the metric $\omega$
to improve the decay of its Ricci potential. This is analogous to
Lemma 2.12 in Conlon-Hein~\cite{CH13} and is based on inverting the
Laplacian in suitable weighted spaces. After this we can further
deform $\omega$ to a Calabi-Yau metric using a non-compact version of
Yau's Theorem developed by Tian-Yau~\cite{TY91}. We use the
version of this due to Hein~\cite{HeinThesis}. 

The technical heart of the paper is in
Sections~\ref{sec:modelspace} and \ref{sec:Laplaceinverse}, inverting the
Laplacian in suitable weighted spaces.  On asymptotically conical
manifolds with smooth link at infinity there is a well developed
theory for this, going back to Lockhart-McOwen~\cite{LM85}, however the
tangent cone $X_0$ of $(X_1, \omega)$ has a singular cross section -
it is the double suspension of the link of $V_0$, which has a circle
of singularities modeled on $V_0$. There have been several works
dealing with the Laplacian on similar spaces, most notably the theory
of QALE spaces due to Joyce~\cite{Joy00}, and the more general QAC spaces studied
by Degeratu-Mazzeo~\cite{DM14}. On the one hand $(X_1,\omega)$ does
not quite fit into the QAC framework, but more crucially, when solving
the equation $\Delta u = f$, these works require at least quadratic
decay of $f$ (at least away from the singular rays), since they essentially rely
on taking a convolution with the Green's function. In our application,
however, we also deal with $f$ which have slower decay (this is the
case when the degree $d \leq 3$). 

In order to solve $\Delta u =f$ for $f$ which do not decay
sufficiently fast, instead of analysing the Green's function, we
construct an approximate inverse for $\Delta$ on suitable local
patches, which we can then glue together using cutoff functions. This
is analogous to the method employed in many other geometric gluing
problems (see e.g. Donaldson-Kronheimer~\cite[Chapter 7]{DK90} or
\cite{GSz10}).  One of the local models that we have to analyze is
the space $X_0\setminus(\mathbf{C}\times \{0\})$ which the part of
$X_1$ away from the singular rays of $X_0$ 
is modeled on.  The other model space is $\mathbf{C}\times
V_1$, on which neighborhoods of those points in $X_1$ are modeled on 
that are close to the singular rays of $X_0$. The basic strategy for
solving $\Delta u = f$ approximately is to first decompose $f$ into
pieces that are supported inside the model neighborhoods, then invert
the Laplacian on the model spaces, and finally patch the results back
together using cutoff functions. We then need to control the errors
that are introduced by the cutoff functions on the one hand, and by
replacing the metric $\omega$ on the model neighborhoods by the
corrsponding model metrics. 

\subsection{Relation to other recent works}\label{sec:comparison}
As this project was nearing completion, 
 two other works appeared that have significant overlap with our
 results. One is the paper of Yang Li~\cite{Li17} mentioned above. It deals
 with the particular case of constructing a metric on $\mathbf{C}^3$
 with tangent cone $\mathbf{C}\times A_1$ at infinity. It relies on
 fairly explicit calculations, exploiting the symmetries of the
 situation. It is likely that his approach gives better asymptotic
 understanding of this metric, and this may be important in
 applications to gluing problems. 

The other, even more recent, paper is Conlon-Rochon~\cite{CR17},
constructing Calabi-Yau metrics on $\mathbf{C}^n$ with tangent cones
$\mathbf{C}\times V_0$ for suitable Calabi-Yau cones $V_0$. 
It relies on extending the work of Degeratu-Mazzeo~\cite{DM14} to a
class of ``warped QAC'' metrics, and as such it still requires faster
than quadratic decay of the Ricci potential of the approximate
solution. In terms of our Theorem~\ref{thm:1} this is the case when
the degree $d > 3$. One important case that this does not cover is
when $f(x) = x_1^2 + \ldots + x_n^2$, and $n > 3$, since here
$d = 2\frac{n-1}{n-2} \leq 3$, although in a subsequent version of
\cite{DM14} the authors have overcome this issue by constructing a
metric whose Ricci potential decays more rapidly. 
Nevertheless, our approach to inverting the Laplacian
also has the advantage that the same method applies to
Theorem~\ref{thm:2}. 

\subsection*{Acknowledgements}
I am grateful to Hajo Hein for multiple helpful discussions. This work
was supported in part by NSF grant DMS-1350696. In addition I thank
the anonymous referees for their careful reading of the paper and helpful
suggestions. 

\section{The smoothing model}\label{sec:smoothing1}
Suppose that $V_0\subset \mathbf{C}^n$ is a hypersurface given by the
equation $f(x_1,\ldots, x_n)=0$, with an isolated singularity at the
origin. Suppose that we have a weight vector $\xi = (w_1,\ldots, w_n)$
with positive, possibly non-integral entries, giving rise to the action
\[ t\cdot (x_1,\ldots, x_n) = (t^{w_1}x_1,\ldots t^{w_n}x_n) \]
on $\mathbf{C}^n$ for $t > 0$. 
We denote the degree of $f$ under this action by $d$,
i.e. $f(t\cdot x) = t^d f(x)$. This action generates the action of a
complex torus $T^c$ on $\mathbf{C}^n$, which fixes $V_0$. We write
$T\subset T^c$ for the maximal compact torus. We will also write
$z\cdot x$ for $z\in \mathbf{C}^*$, which means that we choose a
branch of $\log z$ to define the non-integer powers of $z$. The choice
of branch will not matter. 

There is a nowhere vanishing holomorphic
$(n-1)$-form $\Omega$ on $V_0\setminus \{0\}$ given by 
\[ \Omega = \frac{ dx_2 \wedge dx_3 \wedge\ldots \wedge
  dx_n}{ \partial_{x_1} f} \]
where $\partial_{x_1} f\not=0$, and by similar expressions
where $\partial_{x_i} f \not=0$ for $i > 1$. We assume that $\Omega$
has degree $n-1$ under the action above, which is equivalent to the identity 
\[ \sum_{i=1}^n w_i = d + n -1. \]

Finally we suppose that $V_0$ admits a Ricci flat K\"ahler cone metric
$\omega_{V_0}$, whose homothetic transformations are given by the
action above. Equivalently we have $\omega_{V_0}^{n-1} =
(\sqrt{-1})^{(n-1)^2}\Omega \wedge \overline{\Omega}$.  This setup
has been studied extensively in the literature, in particular in
relation to Sasakian geometry (see \cite{GMSY}).

\begin{lem}\label{lem:d>2}
  Under these assumptions, unless $V_0\cong \mathbf{C}^{n-1}$, we have
  $d > 2$. 
\end{lem}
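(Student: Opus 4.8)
The plan is to extract an inequality for $d$ from the Calabi-Yau cone condition by analyzing the volume growth of $V_0$ and comparing it to that of $\mathbf{C}^{n-1}$. The key point is that a Ricci flat K\"ahler cone $(V_0,\omega_{V_0})$ of complex dimension $n-1$ has Euclidean volume growth, and among all such cones the flat cone $\mathbf{C}^{n-1}$ has the maximal possible volume density. If $V_0$ is not biholomorphic to $\mathbf{C}^{n-1}$, then its singularity at the origin is genuine, so the volume density of the cone is strictly less than that of $\mathbf{C}^{n-1}$. I would then translate this volume density statement into an arithmetic statement about the degree $d$.

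First I would set up the link $L = V_0 \cap S^{2n-1}$ (or the link with respect to the cone metric) and record that the cone metric can be written as $\omega_{V_0} = \ddb \tfrac{1}{2} r^2$ where $r$ is the radial function generated by the Reeb field $\xi$, with $r$ homogeneous of degree $1$ under the $t$-action. The normalization $\omega_{V_0}^{n-1} = (\sqrt{-1})^{(n-1)^2}\Omega\wedge\overline\Omega$, together with the fact that $\Omega$ has degree $n-1$ and $r^2$ has degree $2$, forces a relation among the weights; indeed the condition $\sum w_i = d+n-1$ already recorded in the excerpt is exactly the statement that $\Omega$ has the correct homogeneity for a Calabi-Yau cone with Reeb field $\xi$. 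Next I would bring in the characterization of the volume density of the cone in terms of $\xi$: for a hypersurface singularity the volume of the link, suitably normalized, is governed by the log canonical threshold / the $\xi$-weight of $\Omega$ relative to the ambient, and there is a sharp bound coming from the fact that $V_0 \subset \mathbf{C}^n$ is cut out by a single function $f$ of degree $d$. Concretely, I expect to use that the flat metric on $\mathbf{C}^{n-1}$ is obtained precisely in the degenerate case $d = 2$ with $f$ a nondegenerate quadratic form in fewer variables (so that $V_0$ splits off flat factors), and that any larger $d$, or a genuinely singular $f$ of degree $2$, decreases the density — but a genuinely singular $f$ of degree $2$ cannot support a Ricci flat cone metric with the prescribed homogeneity unless it degenerates to a smooth quadric, i.e.\ to $A_1$, which is not $\mathbf{C}^{n-1}$ and for which one checks $d = 2\tfrac{n-1}{n-2} > 2$ when $n \geq 3$.

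Assembling these, the argument runs: if $d \leq 2$, then from $\sum_{i=1}^n w_i = d + n - 1 \leq n+1$ and $w_i > 0$ I would argue that the weights are so constrained that $f$ must be (up to the torus action and change of coordinates) a sum of squares of a subset of the variables, forcing $V_0$ to be either smooth — hence $\mathbf{C}^{n-1}$, the excluded case — or an $A_1$ singularity in $k \leq n$ variables crossed with a flat factor; in the latter case the Calabi-Yau cone condition on the $A_1$ part gives $d = 2\tfrac{k-1}{k-2}$, which is $> 2$, a contradiction with $d \leq 2$. Hence $d > 2$.

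The main obstacle I anticipate is the step ruling out degree-$2$ singular $f$: I need to show that a homogeneous quadratic $f$ (with respect to \emph{some} positive weight vector $\xi$, not necessarily the standard one) that admits a Ricci flat K\"ahler cone metric with Reeb field $\xi$ and the stated $\Omega$-normalization must, if singular, be equivalent to $A_1$ — equivalently, that one cannot have $d = 2$ with $V_0$ a non-quadric singular hypersurface. This is where I would lean hardest on the structure of the weight system: $d = 2$ with $\sum w_i = n+1$ and all $w_i > 0$ leaves very little room (each $w_i \leq n-1$, and the monomials of $f$ each have $\xi$-degree exactly $2$), and a short combinatorial case analysis on which monomials can appear should pin down $f$ up to coordinate change, after which the Sasaki–Einstein obstruction (or a direct volume computation on the link) finishes it.
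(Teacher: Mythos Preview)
Your proposal has a genuine gap. The volume-density setup in your first two paragraphs is never actually cashed in, and the combinatorial argument you pivot to in the third paragraph is not justified: from $d \leq 2$ and $\sum w_i \leq n+1$ with $w_i > 0$ alone you cannot conclude that $f$ is (essentially) a sum of squares. Nothing yet prevents some of the $w_i$ from being small, say $<1$, which would allow high-order monomials in those variables of weighted degree $2$. The isolated-singularity hypothesis does impose constraints (each variable must appear in $f$), but your promised ``short combinatorial case analysis'' is not carried out, and it is not clear it would be short or that it terminates without further geometric input. Your volume heuristic is also not obviously strong enough on its own: the Bishop-type inequality on the link translates into a bound on $d/(w_1\cdots w_n)$, and with $\sum w_i = n+1$ this does not immediately contradict $d=2$.

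The paper's proof is two lines and uses the Lichnerowicz obstruction of Gauntlett--Martelli--Sparks--Yau as the \emph{opening} move rather than a finishing touch. That obstruction says that on a Calabi--Yau cone which is not flat, every nonconstant holomorphic function has Reeb weight strictly greater than $1$. Since the coordinate functions $x_i$ restrict to nonconstant functions on $V_0$ (unless $f$ is linear, in which case $V_0 \cong \mathbf{C}^{n-1}$), this gives $w_{\min} > 1$ directly. Now if $V_0$ is genuinely singular then $f$ has no linear term, so every monomial has ordinary degree at least $2$, hence weighted degree at least $2w_{\min} > 2$; thus $d > 2$. Your ``Sasaki--Einstein obstruction'' is exactly this Lichnerowicz bound; applying it up front, to the weights themselves, eliminates the case analysis entirely.
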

\begin{proof}
  Let us denote by $w_{min}$ the smallest weight. The Lichnerowicz
  obstruction of Gauntlett-Martelli-Sparks-Yau~\cite{GMSY} implies
  that since $V_0$ admits a Calabi-Yau cone metric, we have $w_{min} >
  1$ (if $w_{min}=1$ then necessarily $V_0\cong \mathbf{C}^{n-1}$). It
  then follows that $d > 2$, since $d \geq 2w_{min}$, unless $f$ is
  linear. 
\end{proof}

Because of this Lemma we can assume throughout that $d > 2$. Otherwise
$V_0 \cong \mathbf{C}^{n-1}$ in which case Theorems~\ref{thm:1}
and \ref{thm:2} are clear. 
Here we are interested
in the existence of complete Ricci flat metrics on the smoothing
$V_1$ of $V_0$, given by the equation $1 + f(x) = 0$. This question
was addressed by Conlon-Hein~\cite{CH13}, and here we review the main
points since our result can be thought of as a generalization of
their work. 

The Ricci flat metric on $V_0$ is given by $\omega_{V_0} = \ddb r^2$,
where $r$ is the 
radial distance from the vertex of the cone. The idea is to use
$r^2$ to write down a K\"ahler potential on $V_1$ near infinity, which is
approximately Ricci flat. Since $V_1$ is asymptotic to $V_0$ near
infinity, a natural approach, followed by Conlon-Hein, is to simply use
an orthogonal projection from $V_1$ to $V_0$ (outside a bounded set)
to pull back the potential $r^2$. We follow a slight variant of this
method. First let us define the function $R:\mathbf{C}^n\to
\mathbf{R}$, by letting $R=1$ on the Euclidean unit sphere, and
extending $R$ to have degree 1 under the action of $F_t$. As shown in
He-Sun~\cite [Lemma 2.2]{HS12}, then 
the form $\ddb R^2$ defines a cone metric on $\mathbf{C}^n$. By the
homogeneity, its 
restriction to $V_0$ is uniformly equivalent to
$\omega_{V_0}$. In particular the function $R$ restricted to $V_0$ is
also uniformly equivalent to the distance function $r$. 
We now take
any smooth extension of $r$ from $V_0$ to
$\mathbf{C}^n\setminus\{0\}$, which has degree 1 under the action
$\xi$. This can be defined by first extending $r$ on the sphere
$R=1$, and then extending it further by homogeneity. We will continue
to write $r$ for this extended function. 

We claim the following. 
\begin{prop}\label{prop:hest1}
  There exists a constant $A > 0$ such that
  \begin{itemize}
    \item[(a)]   The form $\omega = \ddb r^2$ is positive definite on $V_1
      \cap \{R > A\}$. 
    \item[(b)]  The Ricci potential 
      \[ h = \log \frac{(\ddb r^2)^{n-1}}{(\sqrt{-1})^{(n-1)^2}\Omega \wedge
        \overline{\Omega}} \]
      of $(V_1, \omega)$ satisfies
      \[  \label{eq:hest}\nabla^i h = O(R^{-d-i}), \]
      as $R\to\infty$, for all $i \geq 0$, measured with respect to
      $\omega$.
      \end{itemize}
\end{prop}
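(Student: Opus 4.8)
The plan is to reduce everything to the cone $V_0$ by rescaling with the $\xi$-action, as in Conlon--Hein~\cite{CH13}. For $\rho > 0$ set $\tilde V_\rho := F_\rho^{-1}(V_1)$; since $f$ has degree $d$, the map $F_\rho^{-1}$ sends $\{f = c\}$ to $\{f = \rho^{-d}c\}$, so $\tilde V_\rho$ is cut out by $f(x) = -\rho^{-d}$, and hence $\tilde V_\rho \to V_0$ as $\rho \to \infty$. Because $V_0$ has an isolated singularity at the origin, $df \ne 0$ along $V_0$ on the fixed compact annulus $\{1/2 \le R \le 2\}$, so the implicit function theorem presents $\tilde V_\rho$ there as a smooth graph over $V_0$, converging in $C^k$ for every $k$ at rate $O(\rho^{-d})$ (the difference of the two defining functions); in particular $V_1 \cap \{R > A\}$ is smooth for $A$ large. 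I would then record how the ingredients of $\omega$ and $h$ scale: as $r^2$ is homogeneous of degree $2$ and $F_\rho$ is holomorphic, $F_\rho^*(\ddb r^2) = \rho^2 \ddb r^2$, so $F_\rho^*\omega = \rho^2\omega_\rho$ with $\omega_\rho := (\ddb r^2)|_{\tilde V_\rho}$, i.e. $F_\rho\colon (\tilde V_\rho, \rho^2\omega_\rho) \to (V_1,\omega)$ is an isometry. A direct computation using $\sum_i w_i = d + n - 1$ gives $F_\rho^*\Omega = \rho^{\,n-1}\Omega$, hence $F_\rho^*\big[(\sqrt{-1})^{(n-1)^2}\Omega\wedge\overline\Omega\big] = \rho^{2(n-1)}(\sqrt{-1})^{(n-1)^2}\Omega\wedge\overline\Omega$; since $(\ddb r^2)^{n-1}$ scales the same way, $F_\rho^* h = h_\rho$, where $h_\rho$ is given by the same formula with $V_1$ replaced by $\tilde V_\rho$.

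Next I would take the limit $\rho \to \infty$ on the annulus. On $V_0$ itself, $(\ddb r^2)|_{V_0} = \omega_{V_0}$ is positive definite by hypothesis, and the Calabi--Yau cone equation $\omega_{V_0}^{n-1} = (\sqrt{-1})^{(n-1)^2}\Omega\wedge\overline\Omega$ says exactly that the $\rho\to\infty$ limit $h_0$ of $h_\rho$ vanishes identically. By the $C^k$ convergence $\tilde V_\rho \to V_0$ at rate $O(\rho^{-d})$ on $\{1/2 \le R \le 2\}$, and since away from the excluded singular point the expressions for $\omega_\rho$ and $h_\rho$ depend continuously on the defining function with no degenerations (each $\partial_{x_i} f$ stays bounded away from $0$ on its coordinate patch), there is an $A > 0$ such that for all $\rho > A$: $\omega_\rho$ is positive definite on $\tilde V_\rho \cap \{1/2 \le R \le 2\}$, and $|\nabla^i_{\omega_\rho} h_\rho|_{\omega_\rho} = O(\rho^{-d})$ there, uniformly in $\rho$, for every $i \ge 0$.

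Finally I would transfer these statements back to $V_1$. Positivity of a $(1,1)$-form is scale invariant, and $F_\rho$ carries $\{R = 1\}\cap\tilde V_\rho$ onto $\{R = \rho\}\cap V_1$, so positivity of $\omega_\rho$ near $\{R=1\}$ for $\rho > A$ gives positivity of $\omega$ on $\bigcup_{\rho > A}(\{R=\rho\}\cap V_1) = V_1 \cap \{R > A\}$, which is (a). For (b), fix $p \in V_1$ with $\rho := R(p)$ large and $p' := F_\rho^{-1}(p)$, so $R(p') = 1$; using that $F_\rho$ is an isometry $(\tilde V_\rho, \rho^2\omega_\rho) \to (V_1,\omega)$ carrying $h_\rho$ to $h$, together with the elementary scaling $|\nabla^i_{\lambda g} u|_{\lambda g} = \lambda^{-i/2}|\nabla^i_g u|_g$ for constant $\lambda$,
\[ |\nabla^i_\omega h|_\omega(p) = |\nabla^i_{\rho^2\omega_\rho} h_\rho|_{\rho^2\omega_\rho}(p') = \rho^{-i}\,|\nabla^i_{\omega_\rho} h_\rho|_{\omega_\rho}(p') = O(\rho^{-d-i}), \]
which is \eqref{eq:hest}.

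The main obstacle is the uniform-in-$\rho$ control asserted above: one must verify that the $C^k$ convergence $\tilde V_\rho \to V_0$ on the annulus is genuinely uniform and of the stated rate, and that every constant entering the bounds on $\omega_\rho$ and $h_\rho$ can be chosen independent of $\rho$. This is where the isolated-singularity hypothesis on $V_0$ and the weighted homogeneity of $f$ are used, and it is essentially the content of the relevant part of~\cite{CH13}; everything else is bookkeeping with the $\xi$-action.
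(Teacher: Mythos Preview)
Your argument is correct and follows essentially the same approach as the paper: rescale by the homothetic action $F_\rho$ to reduce to a fixed annulus $\{1/2 \le R \le 2\}$, use that the rescaled hypersurfaces $\tilde V_\rho = \{f = -\rho^{-d}\}$ converge in $C^\infty$ to $V_0$ at rate $O(\rho^{-d})$ via the implicit function theorem, observe that the limiting Ricci potential vanishes by the Calabi--Yau cone equation, and then transfer the resulting $O(\rho^{-d})$ bound back to $V_1$ by the scaling law for $|\nabla^i h|$. The only notable difference is that the paper makes the comparison with $V_0$ on the annulus explicit via local \emph{holomorphic} maps $G_i : B_i \to \mathbf{C}^n$ with $G_i(V_{K^{-d}}) \subset V_0$ and $G_i = \mathrm{id} + O(K^{-d})$, which lets one compare $\Omega$ and $\omega$ on the two hypersurfaces directly; your version instead uses that the ambient formula for $\Omega$ restricts to every level set and tracks the exact homogeneity $F_\rho^*\Omega = \rho^{\,n-1}\Omega$, which is an equally valid (and slightly slicker) way to see that the Ricci potential is scale invariant.
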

\begin{proof}
  Given large $K$, let us analyze the region $\{K/2 < R <2K\}$ by
  scaling the metric by a factor of $K^{-1}$. At the same time let us
  introduce rescaled coordinates $\tilde{x} = K^{-1}\cdot x$, and
  $\tilde{r} = K^{-1} r, \tilde{R} = K^{-1}R$. The rescaled metric is given by
  \[ K^{-2} \omega = \ddb \tilde{r}^2, \]
  and in the rescaled coordinates the equation of $V_1$ is
  \[ K^{-d} + f(\tilde{x}) = 0. \]
   Let us write this as $V_{K^{-d}}$. 
   Restricted to the annular region $\{1/2 < \tilde R < 2\}$, the
   submanifolds $V_{K^{-d}}$ converge in $C^\infty$ to $V_0$
   uniformly. In addition $\tilde{r}^2$ as a function of $\tilde{x}$
   is independent of $K$ by the homogeneity of $r$. 
   
   Using the implicit function theorem we can
   cover $V_{K^{-d}} \cap \{1/2 < \tilde{R} < 2\}$ by a finite number of
   coordinate balls $B_i$ in each of which $V_{K^{-d}}$ is a hyperplane, and moreover
   we can find holomorphic functions $G_i : B_i \to \mathbf{C}^n$
   such that $G_i(V_{K^{-d}})\subset V_0$, and  $G_i(y) = y +
   O(K^{-d})$. Since we also control
   the derivatives of the $G_i$ and $\tilde r^2$ is a fixed smooth function, we obtain
   \[ \tilde r^2 - G_i^*\tilde r^2 = O(K^{-d}) \]
   in $C^\infty$,  on each of our coordinate balls. We measure
   derivatives here with respect to a fixed metric $\ddb \tilde R^2$.
   In particular once $K$ is sufficiently large, $\ddb \tilde r^2$ defines a
   positive definite form on $V_{K^{-d}}$, and in fact it is uniformly
   equivalent to $\ddb \tilde R^2$.  
  
   To control the Ricci potential, we simply need to compare $\Omega$ with
   $G_i^*\Omega$, since by assumption we have $\omega_{V_0}^n =
   (\sqrt{-1})^{(n-1)^2} \Omega \wedge\overline{\Omega}$. By the same reasoning
   we have $\Omega - G_i^*\Omega = O(K^{-d})$. Since the Ricci
   potential is invariant under scaling we find that on the annulus
   $\{K/2 < R < 2K\}$ we have
   \[ |\nabla^i h|_{K^{-2} \ddb R^2} = O(K^{-d}), \]
   where we indicate that we measure derivatives with respect to
   $K^{-2}\ddb R^2$. 
   Since $\ddb R^2$ is uniformly equivalent to $\ddb r^2$ (once $K$ is
   sufficiently large), this implies the result.  
\end{proof}

We can now write down a metric $\omega_{V_1}$ 
on $V_1$, which agrees with $\ddb r^2$ on the set where $R$ is
sufficiently large. One
way to do this is to consider the K\"ahler potential $C'
(1+|x|^2)^\alpha$ on $\mathbf{C}^n$ for
large $C'$. For sufficiently small $\alpha > 0$ this grows slower than
$r^2$, but if $C'$ is sufficiently large, then we can ensure that 
\[ C'(1 + |x|^2)^\alpha > r^2 \]
on the set where $R < 3K/2$, say. 
We can now define a regularized maximum (see
Demailly~\cite[\textsection 5.E]{Demailly})
\[ \Phi = \widetilde{\max} \Big\{ C'(1 + |x|^2)^\alpha,  r^2 \Big\}, \]
and let $\omega_{V_1} = \ddb \Phi |_{V_1}$. This defines a smooth
metric on $V_1$, which equals $\ddb r^2$ where $R$ is sufficiently
large. 

The estimate \eqref{eq:hest} says that the Ricci potential $h$ of
$\omega_{V_1}$ satisfies $h \in C^\infty_{-d}$ in terms of the
weighted spaces used by Conlon-Hein~\cite{CH13}. Their Theorem 2.1 then
implies that we can perturb $\omega_{V_1}$ to a Calabi-Yau
metric $\eta_{V_1} = \ddb \phi$ on $V_1$, where the decay of $\phi -
r^2$ can be controlled. Since by Lemma~\ref{lem:d>2} we have $d > 2$, 
from \cite[Theorem 2.1]{CH13} it follows that we can ensure $\phi -
r^2 \in C^\infty_{-c}(V_1)$ for some $c > 0$. If $d > 3$ then we
can even choose $c > 1$. We can write this estimate in the following
form: there are constants $C_i$, such that if $x$ lies in the region
where $1/2 < R < 2$, and $\lambda\cdot x \in V_1$ for some $\lambda >
1$, then 
\[ \label{eq:phiestimate}
  \left|\nabla^i  \Big[r^2 - \lambda^{-2} \phi (\lambda\cdot x)\Big]
  \right|_{\ddb R^2} < C_i \lambda^{-2-c}.\]
Moreover by the
uniqueness statment of \cite[Theorem 2.1]{CH13}, $\phi$ is invariant
under the action of $T$. 

\section{The approximate solutions on
  $\mathbf{C}^n$}\label{sec:approx1}
We now consider $X_1\subset
\mathbf{C}^{n+1}$ given by
\[ z + f(x_1,\ldots, x_n) = 0, \]
where $f$ is the polynomial from Section~\ref{sec:smoothing1}. Recall
that we have a Ricci flat cone metric on $V_0 = f^{-1}(0)\subset
\mathbf{C}^n$, whose distance function is $r$, and we have smoothly
extended $r$ so that it is defined on $\mathbf{C}^n\setminus \{0\}$
and has degree $1$ for the action $\xi$. 

The hypersurface $X_0 = \mathbf{C}\times V_0\subset
 \mathbf{C}^{n+1}$ then has a Ricci flat cone metric $\ddb(|z|^2 + r^2)$.
The corresponding homothetic scalings are given by the action with
weights $(1, w_1,\ldots, w_n)$. This metric is uniformly equivalent to
$\ddb \rho^2$, where
\[ \rho^2 = |z|^2 + R^2, \]
in terms of $R$ from above. 

We would like to define a metric on $X_1$ using the potential $|z|^2 +
r^2$ just as we did for $V_1$ above, but now $X_0$ and $|z|^2+r^2$ are singular along
$\mathbf{C}\times \{0\}$. Our approach is to use $|z|^2 + r^2$ as the
potential away from the singular rays, and a suitable scaling of the
potential $\phi$ on $V_1$ near the singular rays. 

Let us denote by $\gamma_1(s)$ a cutoff function satisfying
\[ \gamma_1(s) = \begin{cases} 1\, &\text{ if } s > 2 \\
  0\, &\text{ if } s < 1, \end{cases}\]
and write $\gamma_2 = 1 - \gamma_1$. We then define the approximate
solution on $X_1$, at least on the set where $\rho > P$ for
sufficiently large $P$, by
\[  \label{eq:approx1} \omega = \ddb\Big( |z|^2 + \gamma_1(R\rho^{-\alpha}) r^2 +
\gamma_2(R\rho^{-\alpha}) |z|^{2/d} \phi(z^{-1/d}\cdot x)\Big), \]
where $\alpha \in (1/d, 1)$ is to be chosen. 
As before we must choose a branch of $\log z$ to define $z^{-1/d}\cdot
x$, however the value of $\phi(z^{-1/d}\cdot x)$ is independent of
this choice since $\phi$ is $T$-invariant. Note moreover that if
\[ z + f(x) = 0, \]
then 
\[ 1 + f(z^{-1/d}\cdot x) = 0, \]
and so $z^{-1/d}\cdot x \in V_1$, where $\phi$ is defined. Writing
$\phi(x) = r^2 + \phi_{-c}(x)$ we have
\[ \omega =  \ddb\Big( |z|^2 + r^2 +
\gamma_2(R\rho^{-\alpha}) |z|^{2/d} \phi_{-c}(z^{-1/d}\cdot
x)\Big), \]
and the estimate \eqref{eq:phiestimate} implies that the term
involving $\phi_{-c}$ is of lower order than $|z|^2 + r^2$. It follows from this
that $\omega = \ddb\Phi$
where the potential $\Phi$ has the same growth rate as $|z|^2 +
r^2 = \rho^2$. In particular if $\omega$ is positive definite on
the set where $\rho > P$, we can argue as in the construction
of $\omega_{V_1}$ above, to construct a metric
on $X_1$ that agrees with $\omega$ where $\rho > 2P$. 

The holomorphic $n$-form
\[ \Omega = \frac{dz\wedge dx_2\wedge \ldots \wedge
    dx_n}{\partial_{x_1}f} \]
restricts to a nowhere vanishing $n$-form on $X_0$ as well as on
$X_1$, and as in the previous section we wish to estimate the Ricci
potential of $\omega$ with respect to $\Omega$. We have the
following in analogy with Proposition~\ref{prop:hest1}. 
\begin{prop}\label{prop:decayest2}
  Fix $\alpha\in (1/d,1)$. 
  The form $\omega$ defines a metric on the subset of $X_1$ where $\rho > P$, for
  sufficiently large $P$. For suitable constants $\kappa, C_i > 0$ and
  weight $\delta < 2/d$, 
  the Ricci potential $h$ of $\omega$
  satisfies, for large $\rho$, 
  \[ |\nabla^i h|_\omega < \begin{cases} 
      C_i \rho^{\delta-2-i}\, \text{ if } R > \kappa \rho \\
      C_i \rho^\delta R^{-2-i}\, \text{ if } R\in (\kappa^{-1}
      \rho^{1/d}, \kappa \rho)\\
      C_i  \rho^{\delta-2/d-i/d}\, \text{ if } R < \kappa^{-1}\rho^{1/d}.
    \end{cases}\]
   If in addition $d > 3$ and $\alpha$ is chosen sufficiently close to
   $1$, then we can even choose $\delta < 0$, i.e. in this case 
  $h$ decays faster than quadratically away
  from the singular rays in this case. 
\end{prop}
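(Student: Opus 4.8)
The plan is to establish the three regimes of the estimate by a rescaling argument analogous to the proof of Proposition~\ref{prop:hest1}, carried out separately in the three overlapping neighborhoods into which $\{\rho > P\}$ naturally decomposes. Fix a large scale $K$, and work in the annulus $\{K/2 < \rho < 2K\}$, rescaling the ambient metric by $K^{-1}$ and introducing rescaled coordinates $\tilde z = K^{-1} z$, $\tilde x = K^{-1}\cdot x$, $\tilde R = K^{-1}R$, $\tilde r = K^{-1} r$, $\tilde\rho = K^{-1}\rho$. The rescaled hypersurface has equation $K^{-d}\tilde z + f(\tilde x) = 0$, which converges smoothly to $X_0 = \mathbf{C}\times V_0$ on compact subsets of $X_0\setminus (\mathbf{C}\times\{0\})$ as $K\to\infty$.

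\emph{First regime ($R > \kappa\rho$):} Here we are away from the singular rays, so $\gamma_1(R\rho^{-\alpha}) = 1$ for $\rho$ large (since $R\rho^{-\alpha} > \kappa\rho^{1-\alpha}\to\infty$), and $\omega = \ddb(|z|^2 + r^2)$. The rescaled submanifold is a graph over a region in $X_0$ bounded away from the singular locus, and the implicit function theorem gives holomorphic maps $G_i$ onto $X_0$ with $G_i(y) = y + O(K^{-d})$, so that $|z|^2 + r^2 - G_i^*(|z|^2+r^2) = O(K^{-d})$ and $\Omega - G_i^*\Omega = O(K^{-d})$ in $C^\infty$ with respect to $\ddb\tilde\rho^2$. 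Since $X_0$ is Ricci flat away from the singular rays, the Ricci potential satisfies $|\nabla^i h|_{K^{-2}\ddb\rho^2} = O(K^{-d})$ on this annulus, giving the first line with any $\delta \leq 2-d < 0$; note $\delta < 2/d$ is automatic since $d > 2$.

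\emph{Intermediate and inner regimes:} In the range $R < \kappa\rho$ one uses instead the scaling adapted to the $V_1$-model: for a point with $R\sim\rho^{\beta}$, $\beta\in[1/d,1)$, one rescales so that the $x$-variables are of unit size, i.e. one writes $x = (z^{1/d}\cdot)\,y$ with $y\in V_1$ of controlled size, and the relevant model metric on the slice is $\ddb\phi$ scaled by $|z|^{2/d}$. The cutoff $\gamma_2(R\rho^{-\alpha})$ is nonzero precisely when $R \lesssim \rho^\alpha$, so in the transition annulus $R\in(\rho^{1/d},\rho^\alpha)$ both terms contribute and one must bound the error introduced by $\ddb$ hitting the cutoff: since $R\rho^{-\alpha}$ varies by a bounded factor across a unit $\omega$-ball there and $\phi_{-c}(z^{-1/d}\cdot x)$ together with its derivatives are $O(R^{-c})$ relative to $r^2$ by \eqref{eq:phiestimate}, the cutoff error is $O(\rho^{\text{(something)}} R^{-2-i})$ with a power of $\rho$ below $2/d$; comparing $\Omega$ with its pullback under the approximating map to $\mathbf{C}\times V_1$ contributes $O(R^{-d})$ or $O(\rho^{-d+\cdots})$, and in the inner region $R < \kappa^{-1}\rho^{1/d}$ the metric is $|z|^{2/d}$ times a copy of $\eta_{V_1}$ (up to rescaled error), which is genuinely Calabi-Yau, so $h$ comes entirely from the $\phi_{-c}$-error and the discrepancy between $\Omega$ and the product form on $\mathbf{C}\times V_1$, giving the $\rho^{\delta-2/d-i/d}$ bound after undoing the $|z|^{2/d}$ scaling.

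\emph{The $d>3$ improvement.} When $d > 3$, Conlon-Hein's theorem lets us take the decay exponent $c > 1$ in \eqref{eq:phiestimate}, so the $\phi_{-c}$-error term decays faster; choosing $\alpha$ close to $1$ shrinks the transition region $\rho^{1/d} < R < \rho^\alpha$ and pushes the dominant contribution to come from the $O(K^{-d})$ terms of the first regime type, which already give a negative exponent. One then checks the bookkeeping of exponents — how powers of $|z| \sim \rho$ combine with powers of $R$ under the two competing rescalings — closes up to give a uniform $\delta < 0$.

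\emph{Main obstacle.} The routine part is each individual rescaling; the real work is the \emph{exponent bookkeeping in the transition region} $\kappa^{-1}\rho^{1/d} < R < \kappa\rho$, where the metric $\omega$ interpolates between the cone metric $\ddb(|z|^2+r^2)$ on $X_0$ and the rescaled smoothing metric $|z|^{2/d}\ddb\phi$, and one must simultaneously (i) track the error from differentiating the cutoff $\gamma_i(R\rho^{-\alpha})$, (ii) track the error from replacing $\phi$ by $r^2$ via \eqref{eq:phiestimate}, and (iii) track the error from replacing $\Omega$ by its product model — all measured in the correct rescaled $\omega$-norm, and then show the resulting powers of $\rho$ and $R$ organize into the stated piecewise bound with a single weight $\delta < 2/d$ (and $\delta<0$ when $d>3$ and $\alpha\to 1$). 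Getting the interplay between the exponent $\alpha$, the decay rate $c$, and the geometry of the two rescalings to produce a consistent $\delta$ is the crux.
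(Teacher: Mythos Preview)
Your overall strategy---rescale and compare to model spaces in different regions---is exactly the paper's, and your treatment of the outer region $R>\kappa\rho$ is essentially Region~I of the paper. However, two things need fixing.

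First, a genuine computational slip in Region~I: under $\tilde z=K^{-1}z$, $\tilde x=K^{-1}\cdot x$ the equation of $X_1$ becomes $K^{1-d}\tilde z+f(\tilde x)=0$, not $K^{-d}\tilde z+f(\tilde x)=0$. The implicit-function maps satisfy $G_i(y)=y+O(K^{1-d})$, so $|\nabla^i h|_\omega=O(K^{1-d-i})$, which forces $\delta>3-d$ (not $\delta\le 2-d$). This still gives $\delta<2/d$ for $d>2$ and $\delta<0$ for $d>3$, but the exponent matters downstream.

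Second, and more substantively, your ``intermediate and inner regimes'' is too coarse to close. The paper does \emph{not} use the single $|z|^{1/d}$-scaling you propose throughout $R<\kappa\rho$; instead it subdivides into four further regions II--V, and in II--IV the correct move is to \emph{translate} $z\mapsto z-z_0$ (with $|z_0|\sim\rho$) and then rescale by $K\sim R$, so that $|\tilde z|<1$ while the large parameter $z_0$ sits in the equation $K^{-d}(K\tilde z+z_0)+f(\tilde x)=0$. The model one compares to also changes across this range: in Region~II ($R>2\rho^\alpha$, so $\gamma_1=1$) one still compares to $X_0$ and the error is $O(K^{-d}D)$ with $D\sim\rho$; in Region~III (the cutoff annulus $R\sim\rho^\alpha$) one picks up the additional $\phi_{-c}$-error $O((K^{-1}D^{1/d})^{2+c})$ from \eqref{eq:phiestimate}; in Region~IV ($\kappa^{-1}\rho^{1/d}<R<\rho^\alpha/2$, so $\gamma_2=1$) one compares instead to $\mathbf{C}\times V_{K^{-d}z_0}$ and must estimate the difference of K\"ahler potentials as in \eqref{eq:vv3}--\eqref{eq:vv5}, yielding $O(K^{1-d})+O(K^{-1-c}D^{(2+c)/d-1})$. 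Only in Region~V ($R<\kappa^{-1}\rho^{1/d}$) does one scale by $|z_0|^{1/d}$ and compare directly to $\mathbf{C}\times V_1$, giving error $O(D^{1/d-1})$. Your sentence ``$O(\rho^{\text{(something)}}R^{-2-i})$'' is precisely where these five separate exponent computations live, and without the $R$-scaling with $z$-translation you will not be able to separate the $\rho$- and $R$-dependence to obtain the middle line of the estimate. You have correctly identified this as the crux; what is missing is the realization that the scaling factor in that zone must be $R$, not $\rho$ or $\rho^{1/d}$.
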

\begin{proof}
  The proof is similar to that of
  Proposition~\ref{prop:hest1}, but we scale our metric in a different way near
  the singular rays of $X_0$, and so we have several different regions
  to study separately. 

\bigskip
  {\bf Region I}: Consider the region where $R > \kappa\rho$, and
  $\rho\in (D/2, 2D)$ for some large $D$. Here we are uniformly far
  away from the singular rays. We study the scaled form
  $D^{-2}\omega$, in terms of rescaled coordinates 
  \[ \tilde{z} &= D^{-1}z \\
     \tilde{x} &= D^{-1}\cdot x,\]
   and we let $\tilde{r} = D^{-1}r$ (recall that here $D^{-1}\cdot x$
   is defined using the action with weights $w_i$). On this region, once $D$ is
   large enough, we have $\gamma_1=1, \gamma_2=0$, and so
  \[  D^{-2}\omega = \ddb\Big( |\tilde{z}|^2 + \tilde{r}^2\Big). \]
   In addition in terms of these coordinates $X_1$ has equation
   \[ D^{1-d}\tilde{z} + f(\tilde{x}) = 0, \]
   using that $f$ has degree $d$, and so $f(D\cdot \tilde{x}) = D^d
   f(\tilde{x})$. Note that $R\in (\kappa\rho, \rho)$, and $r$ is
   uniformly equivalent to $R$, and in addition $|\tilde{z}| < 2$. It
   follows that we are in essentially the same situation as in the
   proof of Proposition~\ref{prop:hest1}. The same arguments show that 
   \[ |\nabla^i h|_{D^{-2}\omega} \leq C_i D^{1-d}, \]
   and so on this region
   \[ |\nabla^i h|_\omega \leq C_i D^{1-d-i}. \]
   We can therefore choose any 
   $\delta$ such that $\delta-2 > 1-d$, i.e. $\delta > 3-d$. If $d >
   3$ we can choose $\delta < 0$, while if $d > 2$, then $3-d <
   2/d$ and so we can choose $\delta < 2/d$. 
   This implies the required estimates on this region. 

\bigskip
   {\bf Region II}: Suppose now that $R\in (K/2, 2K)$ for some $K <
   \kappa\rho$, but $K/2 > 2\rho^\alpha$. In addition let $\rho\in
   (D/2, 2D)$. In this case $\rho$ is comparable to $|z|$, and here we still have
   $\gamma_1=1, \gamma_2=0$. We assume that for some fixed $z_0$ we
   have $|z-z_0| < K$ (so that $z$ will be in a unit ball centered at
   $z_0$ after scaling). We now scale our form by $K$, and define
   \[\label{eq:scaleII}
     \tilde{z} = K^{-1}(z-z_0), \quad  \tilde{x} = K^{-1}\cdot x, \quad  \tilde{r} = K^{-1}r. \]
   In terms of these we have
   \[ K^{-2}\omega = \ddb\Big( |\tilde{z}|^2 + \tilde{r}^2\Big), \]
   and the equation of $X_1$ is
   \[ K^{-d}(K\tilde{z} + z_0) + f(\tilde{x}) = 0. \]
   In addition $|\tilde{z}| < 1$. We can still argue essentially like in
   Proposition~\ref{prop:hest1}, and now the errors we obtain are of
   order $K^{-d}D$ since $|\tilde{z}| < 1$ and $|z_0|\sim D$. 
The Ricci potential therefore satisfies
   \[ |\nabla^i h|_{\omega} \leq C_i DK^{-d-i}. \]
   Since $d > 2$ and $K > 4\rho^\alpha$, we have
   \[ DK^{2-d}K^{-2-i} < C D^{1 + \alpha(2-d)} K^{-2-i} < C
     \rho^{1+\alpha(2-d)} R^{-2-i}, \]
   for suitable $C$. We need to choose
   $\delta$ so that $1+\alpha(2-d) < \delta$. If
   $\alpha$ is sufficiently close to $1$, we can choose any $\delta >
   3-d$, which is the same constraint as in Region I.  If we merely
   have $\alpha > 1/d$, then we still have $1 + \alpha(2-d) < 2/d$, as
   required. 
   
   \bigskip
   {\bf Region III}: We now suppose that we are in the gluing region,
   so $R\in (K/2, 2K)$, but $K\in (\rho^\alpha, 2\rho^\alpha)$. Here the
   cutoff functions $\gamma_1, \gamma_2$ are not constant. Suppose
   that $\rho\in (D/2, 2D)$, and so $|z|$ is comparable to $D$. 
   We use the same scaling and change of variables \eqref{eq:scaleII}
   as in Region II.  We then
   have
   \[ K^{-2}\omega = \ddb\Big( |\tilde{z}|^2 + \gamma_1 \tilde{r}^2 +
     \gamma_2 K^{-2}|K\tilde{z} + z_0|^{2/d}
     \phi((K\tilde{z}+z_0)^{-1/d}K\cdot \tilde x)\Big). \] 
   and the cutoff functions $\gamma_1, \gamma_2$ have bounded derivatives
   in terms of these rescaled coordinates.
  The equation of $X_1$ is
   \[ K^{-d}(K\tilde{z} + z_0) + f(\tilde{x}) = 0, \]
   as above. We still want to compare the metric to
   \[ \ddb( |\tilde{z}|^2 + \tilde{r}^2) \]
   on $X_0$ which has equation $f(\tilde{x}) = 0$. 
   We use the estimate \eqref{eq:phiestimate} to get
   that in these coordinates 
   \[ \label{eq:vv1} \nabla^i\Big[ K^{-2}|K\tilde{z} + z_0|^{2/d}
     \phi((K\tilde{z}+z_0)^{-1/d}K\cdot \tilde x) - \tilde{r}^2 \Big]=
     O\left( (K^{-1}D^{1/d})^{2+c}\right). \]
   In other words, in the Ricci potential we will have the same error
   $K^{-d}D$ as in Region II from comparing the two equations, as well
   as a new error $(K^{-1}D^{1/d})^{2+c}$ from the error in the
   K\"ahler potential. Since now $K\sim D^\alpha$, this new term
   can be   estimated as follows:
   \[  \label{eq:e1} (K^{-1}D^{1/d})^{2+c} <
   CD^{\frac{2+c}{d}-c\alpha}K^{-2}. \]
   We need $\delta$ so that $\frac{2+c}{d} - c\alpha < \delta$. 
   Choosing $\alpha$ is sufficiently close to
   1, we can choose any $\delta > (2+c)/d - c$. If in addition $d >
   3$, then we have seen that we can choose $c > 1$, and so $(2+c)/d -
   c < 0$, and $\delta$ can be chosen negative. 
   In general if we only have $\alpha > 1/d$, then we still
   have $(2+c)/d - \alpha c < 2/d$, and so we can choose $\delta <
   2/d$. 

   \bigskip
   {\bf Region IV}: We now consider $R\in (K/2, 2K)$, but $K\in
   (\kappa^{-1}\rho^{1/d}, \rho^\alpha/2)$. Here $\gamma_1=0,
   \gamma_2=1$. We suppose that $\rho\in(D/2,2D)$ and so $|z|$
   is comparable to $D$. We scale in the same way as in Regions II, III, so
   we make the change of variables
   \eqref{eq:scaleII}. We have
   \[ K^{-2}\omega = \ddb\Big( |\tilde{z}|^2 +
   K^{-2}|K\tilde{z}+z_0|^{2/d} \phi((K\tilde{z}+z_0)^{-1/d}K\cdot \tilde x)\Big), \]
   and the equation of $X_1$ is still
   \[ K^{-d}(K\tilde{z} + z_0) + f(\tilde{x}) = 0. \]
   Instead of comparing $X_1$ to $X_0$,
   this time we want to compare $X_1$ to the product
   $\mathbf{C} \times V_1$, scaled suitably.  Note that $K^{-d}(K\tilde{z} + z_0)$ to
   leading order is $K^{-d}z_0$, and so we compare $X_1$ to the
   variety $\mathbf{C}\times V_{K^{-d}z_0}$ with equation
   \[ \label{eq:uu5} K^{-d}z_0 + f(\tilde{x}) = 0. \]
   The error introduced in passing from $X_1$ to this variety is of
   order $K^{1-d}$ (since $|\tilde{z}|<1$). 

  It remains to study the difference in the K\"ahler potentials. On
  the variety with equation \eqref{eq:uu5} we have the metric
  \[ \ddb( |\tilde{z}|^2 + K^{-2}|z_0|^{2/d} \phi(
  K|z_0|^{-1/d}\cdot\tilde{x})), \]
  and so we need to estimate the difference
  \[ E = K^{-2}|K\tilde{z}+z_0|^{2/d} \phi((K\tilde{z}+z_0)^{-1/d}K\cdot
  \tilde x) - K^{-2}|z_0|^{2/d} \phi(
  K|z_0|^{-1/d}\cdot\tilde{x}). \]
  Let us write $\phi = r^2 + \phi_{-c}$. By the homogeneity of $r$, we
  have
  \[ \label{eq:vv3} E = K^{-2}|K\tilde{z}+z_0|^{2/d} \phi_{-c}((K\tilde{z}+z_0)^{-1/d}K\cdot
  \tilde x) - K^{-2}|z_0|^{2/d} \phi_{-c}(
  K|z_0|^{-1/d}\cdot\tilde{x}). \]
  In addition since $|\tilde{z}| < 1, |z_0|\sim D$ and $K \ll D$, we have
  \[ \label{eq:vv4} K(K\tilde{z} + z_0)^{-1/d} = z_0^{-1/d}K\big(1 +
    O(KD^{-1})\big). \]\
  Using \eqref{eq:phiestimate} for $\phi_{-c}$, we therefore find that in
  the rescaled coordinates 
  \[ \label{eq:vv5} |\nabla^i E| < C_i (|z_0|^{-1/d}K)^{-2-c} D^{-1}K = O( K^{-1-c}
    D^{\frac{2+c}{d}-1}). \]
  Combining the error $K^{1-d}$ from changing the equation of the
  variety with this error in the K\"ahler potentials, we find that the
  Ricci potential is of order $K^{1-d} +
  K^{-1-c}D^{\frac{2+c}{d}-1}$. We therefore need to ensure that the
  choice of  $\delta$ satisfies
  \[ K^{1-d} +  K^{-1-c}D^{\frac{2+c}{d}-1} < C D^\delta K^{-2}. \]
  Suppose first that $ d> 3$, so that also $c > 1$. Then 
  \[ K^{1-d} = K^{3-d}K^{-2} < CD^{3/d-1}K^{-2},\]
 and also
  \[ K^{-1-c}D^{\frac{2+c}{d}-1} = (KD^{-1/d})^{1-c} D^{\frac{3}{d}-1}
    K^{-2}. \]
  Since $3/d-1 < 0$, we can choose $\delta < 0$ close to zero.

  If we only have $d > 2$, so $c > 0$, then
  \[ K^{-1-c}D^{\frac{2+c}{d}-1} = (KD^{-1/d})^{-c} (KD^{-1}) D^{2/d}
    K^{-2}. \]
  Since for some $C$ we have $C^{-1}D^{1/d} < K < CD$,   
  it follows that we can choose $\delta < 2/d$. 
  
    \bigskip
    {\bf Region V}: Finally we suppose that $R <
    2\kappa^{-1}\rho^{1/d}$. Again, let $z$ be very close to $z_0$,
    with $\rho$, and therefore $|z_0|$ comparable to $D$.
    We now scale by $|z_0|^{1/d}$, and so
    we change variables to 
    \[ \tilde{z} = z_0^{-1/d}(z-z_0), \quad \tilde{x} = z_0^{-1/d}\cdot x,\quad
      \tilde{r} = |z_0|^{-1/d}r, \]
    so that $|\tilde{z}|, \tilde{r} <C$.
    We have
    \[ |z_0|^{-2/d}\omega = \ddb\Big[ |\tilde{z}|^2 + |z_0|^{-2/d}\big|z_0^{1/d}\tilde{z}+z_0\big|^{2/d} \phi\big( z_0^{1/d}(z_0^{1/d}\tilde{z}+z_0)^{-1/d}\cdot\tilde{x}\big)\Big], \]
    and the equation of $X_1$ is
    \[ z_0^{1/d-1}\tilde{z} +1 + f(\tilde{x}) = 0. \]
    We want to compare this to $\mathbf{C}\times V_1$, with equation
    \[ 1 + f(\tilde{x}) = 0, \]
    and metric 
    \[ \ddb\big[ |\tilde{z}|^2 + \phi(\tilde x)\big]. \]
    The difference in the equations 
    results in an error of order $D^{1/d-1}$. For the K\"ahler
    potential note that
    \[ z_0^{1/d}(z_0^{1/d}\tilde{z}+z_0)^{-1/d} = 1 + O(D^{1/d-1}), \]
    and so 
  \[ |z_0|^{-2/d}\big|z_0^{1/d}\tilde{z}+z_0\big|^{2/d} \phi\big(
    z_0^{1/d}(z_0^{1/d}\tilde{z}+z_0)^{-1/d}\cdot\tilde{x}\big) -
    \phi(\tilde{x}) = O(D^{1/d-1}). \]
  In sum the Ricci potential is of order $D^{1/d-1}$, and we need
  $\delta$ that satisfies
  \[ D^{1/d-1} < CD^{\delta-2/d}, \]
    i.e. $\delta > 3/d - 1$. In particular if $d > 2$ we can choose $\delta < 2/d$,
    while if $d > 3$ we can choose $\delta < 0$. 
\end{proof}

\section{Weighted spaces on $X_1$}
As in the discussion before Proposition~\ref{prop:decayest2}, we
define a metric $\omega$ on all of $X_1$, which agrees with the form
defined in \eqref{eq:approx1} for sufficiently large $\rho$. Our eventual
goal is to perturb the metric $\omega$ to a Calabi-Yau metric on the
set $\{\rho > A\}$ for sufficiently large $A$, at which point we will
be able to apply Hein~\cite[Proposition 4.1]{HeinThesis} to construct
a global Calabi-Yau metric on $X_1$. The main difficulty 
is to invert the linearized operator, which is the Laplacian
on $(X_1, \omega)$.

The analysis of the Laplacian on asymptotically conical spaces has
been studied extensively (see e.g. Lockhart-McOwen~\cite{LM85}),
and was used in the work of
Conlon-Hein~\cite{CH13} discussed in Section~\ref{sec:smoothing1} above.
The difference is that we now need to invert the Laplacian in
a more complicated weighted space that accounts for the singular rays
in the tangent cone at infinity. 
This almost fits into the framework of QAC
spaces studied by Degeratu-Mazzeo~\cite{DM14}, which in turn
generalizes the work of Joyce~\cite{Joy00} on QALE manifolds, but
unfortunately those results cannot be applied directly in our
setting. The issue is that at distance $D$ along the
singular rays our metric $\omega$ is modeled on a product of
$\mathbf{C}$ with a scaling of the metric $\omega_{V_1}$ by a factor of
$D^{2/d}$. In the QAC or QALE geometries there is no such additional
scale factor on the geometry ``transverse'' to the singular rays. Very
recently, Conlon-Rochon~\cite{CR17} have extended the
techniques of Degeratu-Mazzeo to such a more
general setup, however an additional difficulty in our setting is
that we need to invert the Laplacian on functions that decay more
slowly than what they consider. Because of this, we follow a different
approach, which has the added advantage that
our method applies with essentially no changes
to constructing certain Calabi-Yau
metrics in a neighborhood of an isolated singularity, 
as we will see in Section~\ref{sec:singularity}.

We first define weighted spaces $C^{k,\alpha}_{\delta,
  \tau}(X_1,\omega)$, and then define
$C^{k,\alpha}_{\delta,\tau}(\rho^{-1}[A,\infty),\omega)$ by
restricting functions to the set where $\rho \geq A$. The norm of
a function on $\rho^{-1}[A,\infty)$ is defined to be the infimum of
the corresponding norms of its extensions to $X_1$. 

On the set where $\rho < P$ for some fixed large $P$ (we will have $P < A$)
we use the usual
H\"older norms. When $\rho > P$ then we define the weighted spaces in
terms of the functions $\rho$ (which is essentially the radial
distance), and $R$ (which controls the distance from the singular
rays). To make the analogy with Degeratu-Mazzeo's weighted spaces we
define a smooth function $w$ satisfying  
\[ w = \begin{cases} 1 \quad &\text{ if } R > 2\kappa\rho, \\
  R / (\kappa\rho) \quad &\text{ if } R\in (\kappa^{-1}\rho^{1/d},
  \kappa\rho),\\
  \kappa^{-2}\rho^{1/d-1} \quad &\text{ if } R<
  \frac{1}{2}\kappa^{-1}\rho^{1/d}, \end{cases} \]
for the same $\kappa$ as in Proposition~\ref{prop:decayest2}. 

Similarly to Degeratu-Mazzeo, we define the H\"older seminorm
\[ [T]_{0,\gamma} = \sup_{\rho(z) > K} \rho(z)^{\gamma}
w(z)^{\gamma} \sup_{z'\ne z, z'\in B(z,c)} \frac{|T(z) -
  T(z')|}{d(z,z')^\gamma}. \]
Here $c > 0$ is such that $X_1$ has bounded geometry on balls of
radius $c > 0$ and these balls are geodesically convex. 
$T$ could be a tensor, in which case we compare
$T(z)$ with $T(z')$ using parallel transport along a geodesic. 

The weighted norm of $f$ is then defined by
\[ \label{eq:weightedHolder}
 \Vert f\Vert_{C^{k,\alpha}_{\delta, \tau}} &= \Vert
f\Vert_{C^{k,\alpha}(\rho < 2P)} + \sum_{j=0}^k \sup_{\rho(z) > P}
\rho^{-\delta+j}w^{-\tau+j}|\nabla^j f| \\
&\quad + [\rho^{-\delta +k}w^{-\tau +k} \nabla^k f]_{0,\alpha}. \]
A more concise way to express these weighted norms is in terms of the
conformal scaling $\rho^{-2}w^{-2}\omega$. 
More precisely for this we should replace $\rho$ by a smoothing of
$\max\{1, \rho\}$. 
In terms of these weight functions our weighted norms could be defined
equivalently as
\[ \Vert f\Vert_{C^{k,\alpha}_{\delta,\tau}} = \Vert \rho^{-\delta}
  w^{-\tau} f\Vert_{C^{k,\alpha}_{\rho^{-2}w^{-2} \omega}}. \]
The estimate in Proposition~\ref{prop:decayest2} can now be stated as
saying that the Ricci potential $h$ is in the weighted space
$C^{k,\alpha}_{\delta-2,-2}$, with $\delta$ as in the Proposition. 
Moreover since $w \leq 1$, we also have $h \in
C^{k,\alpha}_{\delta-2, \tau-2}$ for any $\tau < 0$.

In the following propositions we compare the geometry of $(X_1,\omega)$ to
suitable model spaces in different regions. This will be used to study
the Laplacian on $(X_1,\omega)$ in Section~\ref{sec:Laplaceinverse}
below. We will write $g, g_{X_0}$ for the Riemannian metrics given by
$\omega, \omega_{X_0}$ respectively. 

Let us first consider the region
\[ \mathcal{U} = \{\rho > A, R > \Lambda \rho^{1/d}\}\cap X_1,\]
for large $A, \Lambda$, and let
\[  G: \mathcal{U} \to X_0 \]
be the nearest point projection inside $\mathbf{C}^{n+1}$, for the
cone metric $\ddb(|z|^2 + R^2)$. We have
\[ G(z,x) = (z,x'), \]
where $x'\in V_0$ is the nearest point projection of $x\in
\mathbf{C}^n$ under the metric $\ddb R^2$.
\begin{prop}\label{prop:R>L}
  Given any $\epsilon > 0$ we can choose $\Lambda >
  \Lambda(\epsilon)$, and $A > A(\epsilon)$ sufficiently large
  so that on $\mathcal{U}$ we have
  \[  |\nabla^i (G^*g_{X_0} - g)|_{g} < \epsilon
    w^{-i}\rho^{-i}, \]
  for $i\leq k+1$. 
  In particular in terms of our weighted spaces we have
  \[  \Vert G^*g_{X_0} - g\Vert_{C^{k,\alpha}_{0,0}} <
    \epsilon. \]
\end{prop}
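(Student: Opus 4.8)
The plan is to reduce the estimate to the local analysis already carried out in the proof of Proposition~\ref{prop:decayest2}, Region~I and Region~II, since the region $\mathcal{U}$ is precisely the union of those two regions (up to the overlap region near $R \sim \rho^\alpha$, which for $\alpha$ close enough to $1$ and $\Lambda$ large is contained in Region~II-type behavior). First I would cover $\mathcal{U}$ by annular pieces $\mathcal{U}\cap\{\rho\in(D/2,2D)\}$ for dyadic $D$, and within each such piece distinguish the sub-region $R>\kappa\rho$ (far from the singular rays) from the sub-region $\Lambda\rho^{1/d}<R<\kappa\rho$. In the first sub-region I scale by $D^{-1}$ and use the rescaled coordinates $\tilde z = D^{-1}z$, $\tilde x = D^{-1}\cdot x$ exactly as in Region~I; in the second I scale by $K^{-1}$ with $K\sim R$ and use the coordinates \eqref{eq:scaleII}, exactly as in Region~II. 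In the rescaled picture $G$ becomes (uniformly in $D$, $K$) the nearest-point projection onto a fixed model $V_0$, and the submanifolds $V_{K^{-d}}$ or $V_{K^{-d}(K\tilde z+z_0)}$ converge smoothly to $V_0$, so the rescaled metrics converge smoothly to the model cone metric $\ddb(|\tilde z|^2+\tilde r^2)$ on $X_0$.

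The key point is that the convergence is quantitative: the $C^\infty$ distance between $D^{-2}\omega$ (resp.\ $K^{-2}\omega$) and its model on the fixed annulus $\{1/2<\tilde R<2\}$ is bounded by a negative power of $D$ (resp.\ by $DK^{-d}$, which is $\le \Lambda^{-(d-2)}$-small once $K>\Lambda\rho^{1/d}$ since $DK^{2-d}=D K^{2-d}$ and $d>2$). The same holds for $G^*$ of the model metric, since $G$ in rescaled coordinates is the composition of the fixed projection $V_0\to V_0$ with the small perturbation coming from $V_{K^{-d}}\to V_0$, and we control its derivatives by the implicit function theorem as in Proposition~\ref{prop:hest1}. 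Hence for $A$ and $\Lambda$ large, $|\nabla^i(G^*g_{X_0}-g)|$ measured in the rescaled metric is smaller than $\epsilon$ on each piece, for $i\le k+1$. Undoing the scaling multiplies the $i$-th derivative bound by the scale factor to the $-i$: in Region~I this is $D^{-i}\sim\rho^{-i}$, which equals $w^{-i}\rho^{-i}$ since $w=1$ there; in Region~II it is $K^{-i}\sim R^{-i} = (w\rho)^{-i}$ since $w=R/(\kappa\rho)$ there. This gives exactly the claimed pointwise bound $|\nabla^i(G^*g_{X_0}-g)|_g<\epsilon\, w^{-i}\rho^{-i}$, and one checks the H\"older seminorm term the same way by comparing at two nearby points in a rescaled ball of fixed radius.

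The weighted-space statement then follows immediately: by definition of $C^{k,\alpha}_{0,0}$ in \eqref{eq:weightedHolder} (with $\delta=\tau=0$, so the weight functions $\rho^{-\delta+j}w^{-\tau+j}=\rho^j w^j$ exactly cancel the scale factors just computed), the pointwise and H\"older bounds assemble into $\Vert G^*g_{X_0}-g\Vert_{C^{k,\alpha}_{0,0}}<C\epsilon$, and absorbing $C$ into the choice of $\epsilon$ finishes the proof. The main obstacle I expect is purely bookkeeping rather than conceptual: one must check that the \emph{overlap} of the two scaling regimes is handled consistently (the two rescalings differ by a bounded factor when $R\sim\kappa\rho$, so the estimates match up), and that the derivative count $i\le k+1$ is genuinely available — i.e.\ that the estimate \eqref{eq:phiestimate} and the smooth convergence $V_{K^{-d}}\to V_0$ hold in $C^{k+1}$, which they do since $\phi$ is smooth and the implicit-function-theorem charts $G_i$ have controlled derivatives of all orders.
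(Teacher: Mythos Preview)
Your overall strategy---rescaling on dyadic annuli and comparing the rescaled metrics to the model on $X_0$---is exactly right, and matches the paper. But there is a genuine gap in your coverage of the region $\mathcal{U}$.

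You assert that $\mathcal{U}=\{\rho>A,\ R>\Lambda\rho^{1/d}\}$ is ``precisely the union'' of Regions~I and~II from Proposition~\ref{prop:decayest2}, up to an overlap near $R\sim\rho^\alpha$ that you say behaves like Region~II. This is not correct. Recall that $\alpha\in(1/d,1)$ is a parameter fixed in the construction of $\omega$; it is not at your disposal here. For any fixed $\Lambda$, once $\rho$ is large enough we have $\Lambda\rho^{1/d}\ll\rho^\alpha$, so $\mathcal{U}$ contains all of Region~III (where $R\sim\rho^\alpha$) and most of Region~IV (where $\kappa^{-1}\rho^{1/d}<R<\rho^\alpha/2$). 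On those regions the metric $\omega$ is \emph{not} given by $\ddb(|z|^2+r^2)$: in Region~III the cutoff functions $\gamma_1,\gamma_2$ are nonconstant, and in Region~IV we have $\gamma_1=0$, $\gamma_2=1$, so the potential is entirely $|z|^2+|z|^{2/d}\phi(z^{-1/d}\cdot x)$. Your Region~II analysis, which only compares hypersurface equations while taking the potential to be $|\tilde z|^2+\tilde r^2$, does not apply there.

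The paper handles this by treating Region~IV separately: one must compare the $\phi$-based potential to $\tilde r^2$ using the decay estimate \eqref{eq:phiestimate}, obtaining an error of order $(KD^{-1/d})^{-2-c}$. Since on $\mathcal{U}$ we have $K>\Lambda\rho^{1/d}$, this is bounded by $C\Lambda^{-2-c}$, and \emph{this} is where the largeness of $\Lambda$ is actually used. (Your invocation of $\Lambda$ to control $DK^{-d}$ in Region~II is fine, but it is not the essential point.) Region~III is similar, combining both sources of error. Once you add this analysis, the rest of your argument---undoing the scaling to produce the factors $w^{-i}\rho^{-i}$, and reading off the $C^{k,\alpha}_{0,0}$ bound---goes through as you describe.
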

\begin{proof}
  The proof is very similar to the analysis of regions I,II, III, IV in the proof
  of Proposition~\ref{prop:decayest2}. Let us first consider region
  I. Suppose that $\rho\in (D/2, 2D)$, with $D > A$, and $R > \kappa
  \rho$.  Here we have $w\sim 1$, the notation $a\sim b$ meaning that
  $C^{-1} < a/b < C$ for a constant $C$.  The estimate that we need to show
  is equivalent to
  \[ |D^{-2}\nabla^i(G^*g_{X_0} - g)|_{D^{-2}g} <
    \epsilon. \]
  We can work in the rescaled coordinates $\tilde{z}, \tilde{x}$ as in
  Proposition~\ref{prop:decayest2}. In these coordinates $X_1$ is
  given by the equation $D^{1-d}\tilde{z} + f(\tilde{x}) = 0$, and
  $X_0$ by the equation $f(\tilde{x})=0$. In
  Proposition~\ref{prop:decayest2} we were only interested in
  estimating the Ricci potential, and so we compared the two
  hypersurfaces in local holomorphic charts (see the proof of
  Proposition~\ref{prop:hest1}). Now, however, we want a more global
  comparison, using the projection map $G$ which is not holomorphic. 
This is closer to the  approach taken by
Conlon-Hein~\cite{CH13}. The end result is the same, since in local
charts the difference between $G$ and the identity map is of order
$D^{1-d}$. Since $d > 1$ we can ensure that this is less than
$\epsilon$ by taking $A$ large. 

  Regions II, III follow similar calculations to the proof of
  Proposition~\ref{prop:decayest2}. Let us therefore consider region
  IV, which is a little different, since previously we compared this
  region to $\mathbf{C}\times V_1$, whereas now we are comparing it to
  $X_0 = \mathbf{C}\times V_0$. Suppose that $\rho \in (D/2, 2D)$, and
  $R\in (K/2, 2K)$ with $\Lambda\rho^{1/d} < K < \rho^\alpha/2$. In
  this region $\rho$ is comparable to $|z|$, and we suppose that $z$
  is close to some $z_0$, with $|z_0| \in (D/4, 4D)$. Introducing
  coordinates $\tilde{z}, \tilde{x}$ as before, the equation for $X_1$
  is
  \[ K^{-d}(K\tilde z + z_0) + f(\tilde{x}) = 0, \]
  while the equation for $X_0$ is $f(\tilde{x}) = 0$. As long as
  $|\tilde{z}| < 1$, the error introduced by orthogonal projection is
  of order $K^{1-d}$, which can be made arbitrarily small by choosing
  $D$ large. 

  The two K\"ahler potentials that we need to compare are
  \[ \ddb\left(|\tilde{z}|^2 + K^{-2}|K\tilde{z} + z_0|^{2/d}
      \phi\big( (K\tilde{z} + z_0)^{-1/d} K\cdot
      \tilde{x}\big)\right), \]
  and
  \[ \ddb \left( |\tilde{z}|^2 + \tilde{r}^2\right). \]
  If $|\tilde{z}| < 1$ and $|z_0|\sim D$, by the estimate
  \eqref{eq:phiestimate} the difference between the K\"ahler
  potentials is of order
  \[ (KD^{-1/d})^{-c-2} < C \Lambda^{-c-2}. \]
  By choosing sufficiently large $\Lambda$, we can ensure that this is
  less than $\epsilon$. 
\end{proof}

Next, we focus on the region where $\rho > A$, but $R <
\Lambda\rho^{1/d}$. Fix $z_0\in \mathbf{C}$ with $|z_0| > A$, and a
large constant $B$. 
Consider the region $\mathcal{V}\subset X_1$ given by points $(z,x)$
satisfying $|z-z_0| < B|z_0|^{1/d}$ and where $R <
\Lambda\rho^{1/d}$. Let us define new coordinates
\[ \hat{x} = z_0^{-1/d}\cdot x, \quad \hat{z} =
  z_0^{-1/d}(z-z_0), \]
and let $\hat{R} = |z_0|^{-1/d}R$. In addition we let $\hat\zeta =
\max\{1, \hat{R}\}$. Note that $(\hat{z}, \hat{x})$
satisfy the equation
\[ \label{eq:tildezx} \hat{z_0}^{1/d-1}\hat{z} + 1 + f(\hat{x}) =
  0, \]
and $|\hat{z}| < B$, $|\hat{R}| < C\Lambda$ for some fixed constant $C$
(since $\rho$ is comparable to $|z_0|$). 
We define the map
\[ H : \mathcal{V} \to \mathbf{C}\times V_1 \]
by letting $H(\hat{z}, \hat{x}) = (\hat{z}, \hat{x}')$, where
$\hat{x}'$ is the nearest point projection from the solutions of
\eqref{eq:tildezx} to solutions of $1 + f(\hat{x})=0$.
\begin{prop}\label{prop:R<L}
  Given $\epsilon, \Lambda > 0$ if $A >
  A(\epsilon, \Lambda, B)$, then we have
  \[ |\nabla^i(H^*g_{\mathbf{C}\times V_1} -
    |z_0|^{-2/d}g)|_{|z_0|^{-2/d}g} < \epsilon \hat{\zeta}^{-i}, \]
   for $i \leq k+1$. 
  In terms of weighted spaces we then have
  \[ \Vert |z_0|^{2/d}H^*g_{\mathbf{C}\times V_1} - g\Vert_{C^{k,\alpha}_{0,0}}
    < \epsilon. \]
\end{prop}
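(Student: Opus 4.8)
The plan is to mimic the region-by-region rescaling argument of Proposition~\ref{prop:decayest2}, but now carried out uniformly over the slab $\mathcal{V}$ and with $\mathbf{C}\times V_1$ (rather than $X_0$) as the model, tracking $i$ derivatives instead of just the Ricci potential. First I would fix $z_0$ with $|z_0| > A$ and pass to the hatted coordinates $\hat z, \hat x$, in which the equation of $X_1$ becomes \eqref{eq:tildezx} and the rescaled metric $|z_0|^{-2/d}g$ has K\"ahler potential $|z_0|^{-2/d}|z_0^{1/d}\hat z + z_0|^{2/d}\phi\big(z_0^{1/d}(z_0^{1/d}\hat z+z_0)^{-1/d}\cdot\hat x\big)$ (plus the $|\hat z|^2$ term), while $\mathbf{C}\times V_1$ has potential $|\hat z|^2 + \phi(\hat x)$. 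Because $\phi$ is only controlled via the weighted estimate \eqref{eq:phiestimate}, which is uniform on dyadic annuli in $V_1$, I would split $\mathcal{V}$ into the subregion where $\hat R \gtrsim 1$ (i.e. $R \gtrsim |z_0|^{1/d}$), where $\hat\zeta \sim \hat R$ and one rescales by a further factor to land on a fixed annulus of $V_1$, and the core $\hat R \lesssim 1$, where $\hat\zeta \sim 1$ and the geometry is already of bounded size; in both cases the Euclidean $\mathbf{C}$-factor is untouched by $H$, so only the transverse directions need work.

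The two sources of error, exactly as in Region V of Proposition~\ref{prop:decayest2}, are: (i) replacing the equation \eqref{eq:tildezx} of $X_1$ by $1 + f(\hat x) = 0$, which costs $|z_0^{1/d-1}\hat z| = O(|z_0|^{1/d-1}B) = O(D^{1/d-1})$ and, crucially, controls the $C^{k+1}$-difference of the two hypersurfaces and of the projection map $H$ from the identity, since the implicit-function-theorem charts depend smoothly on the coefficients; and (ii) replacing the K\"ahler potential above by $|\hat z|^2 + \phi(\hat x)$, for which the expansion $z_0^{1/d}(z_0^{1/d}\hat z + z_0)^{-1/d} = 1 + O(D^{1/d-1})$ together with \eqref{eq:phiestimate} applied to $\phi_{-c}$ gives a difference of size $O(D^{1/d-1})$ in $C^{k+1}$ relative to the rescaled metric. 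Feeding (i) and (ii) into the formula $g = \partial\bar\partial(\text{potential})$ on the respective hypersurfaces, and using that $H$ is $C^{k+1}$-close to the identity, yields $|\nabla^i(H^*g_{\mathbf{C}\times V_1} - |z_0|^{-2/d}g)|_{|z_0|^{-2/d}g} < C D^{1/d-1}$ on the part where $\hat\zeta\sim 1$; on the part where $\hat R \gtrsim 1$ the same errors reappear after the extra rescaling and produce the factor $\hat\zeta^{-i}$ in the derivative count, since $i$ derivatives measured in $|z_0|^{-2/d}g$ on a region of that scale carry $\hat R^{-i}$. Since $d > 2$ we have $1/d - 1 < 0$, so for $A$ (hence $D = |z_0|$) large enough this is $< \epsilon$, proving the pointwise bound.

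For the final displayed weighted-norm statement I would simply observe that, unwinding the definition \eqref{eq:weightedHolder} on the overlap of all such slabs $\mathcal{V}$ as $z_0$ ranges over $\{|z_0| > A\}$: on the region $R < \Lambda\rho^{1/d}$ the weight function satisfies $w \sim \rho^{1/d-1}$ when $R \lesssim \rho^{1/d}$ and interpolates, while $|z_0| \sim \rho$, so that $|z_0|^{-2/d}g \sim \rho^{-2}w^{-2}g$ up to the $\hat\zeta$-corrections built into $\hat R$; hence the pointwise estimate with the $\hat\zeta^{-i}\sim (\rho^{-1}w^{-1})^{-i}\cdot(\dots)$ weights is exactly the statement that $|z_0|^{2/d}H^*g_{\mathbf{C}\times V_1} - g$ has $C^{k,\alpha}_{0,0}$-norm $< \epsilon$ (the H\"older seminorm term is handled identically, using that $X_1$ has bounded geometry on $c$-balls so that parallel transport is controlled). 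The main obstacle I expect is bookkeeping: making sure the weight $w$ and the local scale $\hat\zeta = \max\{1,\hat R\}$ match up consistently across the transition $R \sim \rho^{1/d}$, and confirming that the $C^{k+1}$ (not merely $C^0$) closeness of the nearest-point projection $H$ to the identity really follows from the smooth dependence of the implicit-function-theorem charts on the equation's coefficients — this is the step that upgrades the Ricci-potential estimates of Proposition~\ref{prop:decayest2} to the metric-level estimate needed here.
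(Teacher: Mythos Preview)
Your proposal is correct and follows essentially the same route as the paper: both arguments reduce to the Region~IV/V analysis of Proposition~\ref{prop:decayest2}, identify the two error sources (the difference $z_0^{1/d-1}\hat z$ in the defining equation, and the perturbation of the K\"ahler potential coming from $z_0^{1/d}(z_0^{1/d}\hat z+z_0)^{-1/d}=1+O(D^{1/d-1})$), and observe that both are $O(D^{1/d-1})\to 0$. The only cosmetic differences are that the paper works directly in the $K$-rescaled coordinates $\tilde z,\tilde x$ rather than first passing to $\hat z,\hat x$ and then rescaling, and that only $d>1$ (not $d>2$) is actually needed for $1/d-1<0$.
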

\begin{proof}
  This follows the analysis of our metric in regions IV, V in
  Proposition~\ref{prop:decayest2}. Let us focus on region IV, which
  is the more complicated one. As before, we have $\rho\in (D/2, 2D)$,
  $R\in (K/2,2K)$, such that $\kappa^{-1}D^{1/d} < K < \Lambda
  D^{1/d}$. Here $|z_0|$ is comparable to $\rho$, and  $\hat{\zeta}$ is
  comparable to $\hat{R} = |z_0|^{-1/d}R$, so $\hat{\zeta} \sim
  |z_0|^{-1/d}K$. The  estimate we need to show is therefore
  \[ |\nabla^i(|z_0|^{2/d}K^{-2}H^*g_{\mathbf{C}\times V_1} -
    K^{-2}g)|_{K^{-2}g} < \epsilon. \]

  We introduce the coordinates $\tilde{z}, \tilde{x}$ as in
  Proposition~\ref{prop:decayest2}, 
  which in terms of $\hat{x}, \hat{z}$ are 
  \[ \tilde{x} = K^{-1}z_0^{1/d}\cdot \hat{x}, \quad \tilde{z} =
    K^{-1}z_0^{1/d}\hat{z}. \]
  In these coordinates $X_1$ is given by
  \[ K^{-d}(K\tilde{z} + z_0) + f(\tilde x) = 0. \]
  We need to compare the metric $\omega$ on $X_1$ with the product
  metric $\mathbf{C}\times V_{K^{-d}z_0}$ on the hypersurface with
  equation
  \[ K^{-d}z_0 + f(\tilde{x}) = 0, \]
  under the closest point projection map. 
  By the same calculations as in the analysis of region IV in
  Proposition~\ref{prop:decayest2} we find that the error between the
  two metrics is of order $BD^{1/d-1}$, and this can be made
  arbitrarily small by taking $D$ large, since $d > 1$. 
\end{proof}

We will need an extension operator
$C^{0,\alpha}_{\delta,\tau}(\rho^{-1}[A, \infty), \omega) \to
C^{0,\alpha}_{\delta,\tau}(X_1,\omega)$. More sophisticated methods
could be used to deal with the $C^{k,\alpha}$ spaces for $k>0$ as
well (see e.g. Seeley~\cite{Seeley}), but for our purposes $k=0$
suffices.
\begin{prop}\label{prop:extensionop}
  For sufficiently large $A$, there is a linear extension operator
  \[ E : C^{0,\alpha}_{\delta,\tau}(\rho^{-1}[A, \infty), \omega) \to
    C^{0,\alpha}_{\delta,\tau}(X_1,\omega), \]
  whose norm is bounded independently of the choice of $A$. 
\end{prop}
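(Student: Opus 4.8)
The plan is to build $E$ by reflecting functions across the level set $\Sigma_A := \{\rho = A\}\cap X_1$ and then cutting off, so that $Ef$ coincides with $f$ on $\{\rho \ge A\}$, is supported in $\{\rho > A-c'\}$ for a fixed small $c'\in(0,c)$, and hence vanishes identically on $\{\rho < 2P\}$ once $A$ is taken large. The only place where anything has to be estimated is then the collar $\mathcal{C}_A := \{A-c'\le \rho\le A\}$, and the whole content of the statement is that this can be carried out with all constants independent of $A$.

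First I would record the uniform geometry of $\Sigma_A$. From the construction of $\omega$ and from Propositions~\ref{prop:R>L} and \ref{prop:R<L}, $(X_1,\omega)$ has bounded geometry on $\omega$-balls of radius $c$ with constants independent of the centre; moreover, inspecting $\omega$ in each of the regions I--V of Proposition~\ref{prop:decayest2} (near the singular rays the $z$-component of $\nabla\rho$ alone already has $\omega$-norm comparable to $1$, while away from them $\rho$ is comparable to the radial coordinate of the honest cone metric) shows that $|\nabla\rho|_\omega$ is bounded above and below by positive constants on $\{\rho\ge A\}$, uniformly in $A$. Hence $\Sigma_A$ is an embedded hypersurface with uniformly bounded second fundamental form, and it has a two-sided tubular neighbourhood of some uniform width, which we may take to be $\ge c'$. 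Writing $t$ for the signed $\omega$-distance to $\Sigma_A$ (positive on $\{\rho>A\}$) and $\sigma$ for the reflection $t\mapsto -t$ along the unit-speed normal geodesics, $\sigma$ is a diffeomorphism of the inner half of this tube onto the outer half, with all derivatives bounded in the $\omega$-metric uniformly in $A$.

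Next, fix once and for all a cutoff $\chi$ on $\mathbf{R}$ with $\chi\equiv 1$ on $[0,\infty)$ and $\chi\equiv 0$ on $(-\infty,-c'/2]$, and set $Ef=f$ where $\rho\ge A$, $Ef(p)=\chi(t(p))\,f(\sigma(p))$ where $-c'/2<t(p)<0$, and $Ef=0$ elsewhere. This is linear and, since $\sigma=\mathrm{id}$ on $\Sigma_A$, continuous across $\Sigma_A$. For the norm bound I would first note that the weight functions $\rho$ and $w$ each vary by at most a bounded factor over any $\omega$-ball of radius $c$ meeting $\mathcal{C}_A$: for $\rho$ this is immediate from the bound on $|\nabla\rho|_\omega$, and for $w$ because near the singular rays an $\omega$-displacement of size $c$ moves $R$ by only $O(\rho^{-1/d})$, which is negligible compared with $R$ there. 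Consequently, on each such ball $B$ the norm $\|\cdot\|_{C^{0,\alpha}_{\delta,\tau}}$ is uniformly equivalent to $\|\cdot\|_{C^{0,\alpha}(\omega)}$ times the weight $\rho^{-\delta}w^{-\tau}$ frozen at the centre of $B$, so it suffices to prove the unweighted local estimate $\|Ef\|_{C^{0,\alpha}(\omega),\,B}\le C\,\|f\|_{C^{0,\alpha}(\omega)}$ on a bounded-radius neighbourhood of $B$ inside $\{\rho\ge A\}$, with $C$ independent of $A$ and $B$. This is the classical half-space reflection estimate: $f\circ\sigma$ has Hölder norm controlled by that of $f$ since $\sigma$ is uniformly bi-Lipschitz; multiplication by $\chi(t)$ is bounded on $C^{0,\alpha}(\omega)$ with a uniform constant since $t$ has bounded gradient; and the only Hölder pairs straddling $\Sigma_A$ or $\{t=-c'/2\}$ lie inside a single $\omega$-ball of radius $c$, where these bounds combine with the continuity of $Ef$. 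Putting the weights back in, taking suprema over $B$, and finally taking the infimum over extensions in the definition of the norm on $\rho^{-1}[A,\infty)$ then yields the claim.

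The step I expect to be the main obstacle is precisely this uniform-in-$A$ control of the collar geometry of $\Sigma_A$ — that $\rho$ is a uniformly regular defining function and that $w$ is essentially constant on $\omega$-balls of radius $c$ — in spite of the anisotropic $D^{2/d}$ rescaling of $\omega$ transverse to the singular rays; once that is established, the remainder is the standard reflection argument and is routine. If a globally defined normal collar is inconvenient, an equivalent route is to cover a neighbourhood of $\Sigma_A$ by uniformly controlled $\omega$-balls of radius $c$ (using bounded geometry), straighten $\Sigma_A$ to a hyperplane in each chart, extend by reflection on each half-ball, and patch with a partition of unity together with a cutoff killing the result where $\rho<A-c'/2$; this gives the same conclusion.
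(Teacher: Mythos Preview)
Your reflection-and-cutoff idea is sound in spirit, but there is a genuine gap in the step where you claim that on an $\omega$-ball $B$ of fixed radius $c$ the weighted norm $\|\cdot\|_{C^{0,\alpha}_{\delta,\tau}}$ is uniformly equivalent to $\rho^{-\delta}w^{-\tau}\|\cdot\|_{C^{0,\alpha}(\omega)}$. Recall the concise definition $\|f\|_{C^{0,\alpha}_{\delta,\tau}} = \|\rho^{-\delta}w^{-\tau}f\|_{C^{0,\alpha}_{(\rho w)^{-2}\omega}}$: the H\"older seminorm is measured in the conformally rescaled metric $(\rho w)^{-2}\omega$, so on $B$ it carries an extra factor $(\rho w)^\alpha$ relative to the $\omega$-seminorm. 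Concretely, take $f=\rho^\delta w^\tau$, so $\|f\|_{C^{0,\alpha}_{\delta,\tau}}=1$. Then $\rho^{-\delta}w^{-\tau}Ef \approx \chi(t)$ near the edge of your cutoff, and the weighted H\"older contribution there is
\[
(\rho w)^\alpha\,\frac{|\chi(t(z))-\chi(t(z'))|}{d(z,z')^\alpha}\ \sim\ (\rho w)^\alpha\,\frac{d(z,z')^{1-\alpha}}{c'}\ \sim\ \frac{(\rho w)^\alpha c^{1-\alpha}}{c'},
\]
which blows up as $A\to\infty$ since $\rho w \ge \kappa^{-2}\rho^{1/d}$. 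So a cutoff (or partition of unity) varying at the fixed $\omega$-scale $c'$ cannot give an $A$-independent bound; your alternative route with balls of radius $c$ has the same defect.

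The paper's proof repairs exactly this point by working at the \emph{variable} scale $r_x\sim \rho w$: it covers $\Sigma_A$ by balls $B(x,r_x)$ with $r_x=\kappa A/10$, $R/10$, or $A^{1/d}$ in the three regimes, so that on each ball the rescaled metric $r_x^{-2}\omega$ has unit-size bounded geometry modelled on $X_0$ or $\mathbf{C}\times V_1$, and the weighted norm becomes a constant weight times the ordinary $C^{0,\alpha}$ norm in the rescaled metric. The half-space reflection and cutoff are then carried out at that rescaled scale, which is precisely what absorbs the $(\rho w)^\alpha$ factor. If you want to salvage your global-collar approach, you would need a collar of variable width $\sim \rho w$ and a cutoff $\chi$ whose transition region has $\omega$-width $\sim \rho w$ at each point; this is possible (the second fundamental form of $\Sigma_A$ is $O(\rho^{-1})$, so the focal radius exceeds $\rho w$), and leads essentially back to the paper's argument. (A minor side remark: your claim that an $\omega$-displacement of size $c$ moves $R$ by $O(\rho^{-1/d})$ is not correct---one has $|\nabla R|_\omega\approx 1$ in every region---though the conclusion that $w$ varies by a bounded factor on such balls is still true, since $R\ge \kappa^{-1}\rho^{1/d}\gg c$ where $w$ depends on $R$.)
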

\begin{proof}
  The basic observation is that a $C^{0,\alpha}$ function $f$ on a half space
  $\mathbf{R}^n_+ = \{x_n \geq 0\} \subset \mathbf{R}^n$ can be
  extended to a function on $\mathbf{R}^n$ by reflection, while preserving the
  $C^{0,\alpha}$-norm. In addition, multiplying by a cutoff function,
  we can define an extension $E(f)$ supported in the set $\{x_n >
  -1\}$,  such that $\Vert E(f)\Vert_{C^{0,\alpha}} \leq C\Vert
  f\Vert_{C^{0,\alpha}(\mathbf{R}^n_+)}$. The same applies if instead
  of a half space, $f$ is defined on the set $\{ x_n \geq
  F(x_1,\ldots,x_{n-1})\}$, where $F$ is a $C^{0,\alpha}$ function on
  $\mathbf{R}^{n-1}$, and the norm of the extension operator will then
  also depend on the $C^{0,\alpha}$-norm of $F$. 
  
  We can globalize this construction to extending functions from
  $\rho^{-1}[A,\infty)$ to $X_1$, using that near each point in
  $\rho^{-1}(A)$ we control the geometry of $\omega$ at suitable
  scales. To do this, let $x\in \rho^{-1}(A)$ and consider the ball
  $B(x,r_x)$, where $r_x$ is defined   as follows:
  \[ r_x = \begin{cases} \kappa A/10, &\quad \text{ if } R >
      \kappa\rho, \\
      R/10, &\quad \text{ if }\kappa^{-1}\rho^{1/d} < R < \kappa\rho,
      \\
      A^{1/d}, &\quad \text{ if }R < \kappa^{-1}\rho^{1/d}.
    \end{cases} \]
  These radii are chosen so that on $B(x,r_x)$ we have $\rho w \sim
  r_x$. Let us analyze these balls at the scale $r_x$. 
  \begin{itemize}
    \item If at $x$ we have $R>\kappa \rho$, then from the analysis of
      Region I in Proposition~\ref{prop:decayest2}
      we know that the scaled down metric $A^{-2}\omega$ on $B(x,r_x)$ 
      converges to the cone metric on $X_0$ as $A\to \infty$ on a
      ball of radius $A^{-1}r_x = \kappa/10$ in $X_0$. Moreover this
      ball is centered at a point $\tilde{x}$ satisfying
      $\rho(\tilde{x})=1$, $R(\tilde{x}) > \kappa$. We therefore
      control the geometry of the boundary $\rho^{-1}(A)$ uniformly,
      and we can extend functions from $\rho^{-1}([A,\infty)) \cap
      B(x,r_x)$ to $B(x,r_x)$ as above. Note in addition that by the
      definition of the weighted norms we have
      \[  \Vert f\Vert_{C^{0,\alpha}_{\delta, \tau}(B(x,r_x), \omega)} \sim
        A^\delta\Vert f\Vert_{C^{0,\alpha}(B(x,r_x), A^{-2}\omega)}, \]
      and so we can control the weighted H\"older norm of the extension.
    \item If at $x$ we have $\kappa^{-1}\rho^{1/d} < R < \kappa\rho$,
      and $R\in (K/2, 2K)$, then from the analysis of Regions II,III,
      IV in Proposition~\ref{prop:decayest2} we know that the scaled
      down metric $K^{-2}\omega$ approaches the model metric on
      $\mathbf{C}\times V_s$ for some $s$ with $|s| \leq 1$. In each
      of these spaces we control the geometry of $\rho^{-1}(A)$, and
      so we can define an extension map. The weighted H\"older norms
      here are related to the norms with respect to the rescaled
      metric by
      \[  \Vert f\Vert_{C^{0,\alpha}_{\delta, \tau}(B(x,r_x), \omega)} \sim
        A^{\delta-\tau}K^\tau\Vert f\Vert_{C^{0,\alpha}(B(x,r_x),
          K^{-2}\omega)}. \]
      \item In a similar way, if $x$ is in the third region $R <
        \kappa^{-1}\rho^{1/d}$, then the rescaled metric
        $A^{-2/d}\omega$ approaches the product metric
        $\mathbf{C}\times V_1$, and again we can define an extension
        map.  The relation between the weighted H\"older norm, and the
        H\"older norm with respect to the rescaled metric is
        \[ \Vert f\Vert_{C^{0,\alpha}_{\delta, \tau}(B(x,r_x), \omega)} \sim
        A^{\delta-\tau + \tau/d}\Vert f\Vert_{C^{0,\alpha}(B(x,r_x),
          A^{-2/d}\omega)}. \]
    \end{itemize}
   Finally, to globalize these local extensions we can use cutoff functions. 
\end{proof}

The next result shows that the tangent cone of $(X_1,\omega)$ at
infinity is $X_0 = \mathbf{C}\times V_0$.
\begin{prop}\label{prop:tancone}
  Let $\epsilon > 0$. If $D$ is sufficiently large, then the
  Gromov-Hausdorff distance between the regions defined by 
  $\rho\in (D/2, 2D)$ in $(X_1, \omega)$, and in $X_0=\mathbf{C}\times
  V_0$  with the product  metric $\omega_0$,  is less than $D\epsilon$. 
\end{prop}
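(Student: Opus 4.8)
The plan is to exhibit, for every large $D$, a map between the two regions that is simultaneously almost distance-preserving and almost onto at scale $D$; a standard argument then bounds the Gromov--Hausdorff distance by $CD\epsilon$, and rescaling $\epsilon$ gives the claim. It is convenient to work with the rescaled metrics $D^{-2}g$ on $X_1$ and $D^{-2}\omega_0$ on $X_0$, so both regions have bounded diameter. Fix a large $\Lambda$, to be chosen in terms of $\epsilon$, and split $\{\rho\in(D/2,2D)\}\cap X_1$ into the \emph{conical part} $\mathcal U_D=\{\rho\in(D/2,2D),\ R>\Lambda\rho^{1/d}\}$ and the \emph{collapsing part} $\mathcal W_D=\{\rho\in(D/2,2D),\ R\le\Lambda\rho^{1/d}\}$, and split the corresponding region of $X_0=\mathbf{C}\times V_0$ into $\mathcal U_D^{X_0}$ and $\mathcal W_D^{X_0}$ in the same way.

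On $\mathcal U_D$ I would use the nearest-point projection $G\colon\mathcal U_D\to X_0$ of Proposition~\ref{prop:R>L}. Taking $\Lambda$ and $A$ large makes $|G^*g_{X_0}-g|_g$ as small as we wish, so $G$ is $(1+\epsilon)$-bi-Lipschitz for the rescaled metrics on paths staying in $\mathcal U_D$, and since $G$ differs from the identity of $\mathbf{C}^{n+1}$ by $O(\rho^{1-d})=o(D)$ it is also $\epsilon D$-onto $\mathcal U_D^{X_0}$ (up to shrinking $\Lambda$ slightly). Hence $G$ is an $\epsilon D$-approximation between the conical parts; the induced length metrics on the two annular regions are bi-Lipschitz to the restricted metrics, so this suffices.

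On $\mathcal W_D$ the point is that at scale $D$ this region \emph{collapses onto the singular ray} $\mathbf{C}\times\{0\}$. For $X_0$ this is elementary: when $R\le\Lambda\rho^{1/d}$ the $V_0$-factor lies in $B_{V_0}(0,C\Lambda D^{1/d})$, of diameter $O(\Lambda D^{1/d})=o(D)$ because $d>1$, while $\rho\approx|z|$; so $\mathcal W_D^{X_0}$ is $o(D)$-Hausdorff-close to an annular region $\mathcal A_D\subset\mathbf{C}\times\{0\}$ cut out by $|z|$, and $(z,x)\mapsto z$ is an $\epsilon D$-approximation onto $\mathcal A_D$. For $X_1$ the same follows from Proposition~\ref{prop:R<L}: covering the $z$-annulus by balls $\{|z-z_0|<B|z_0|^{1/d}\}$, on each of them $H$ identifies $(X_1,D^{-2/d}g)$ up to error $\epsilon$ with a bounded piece of the product $\mathbf{C}\times V_1$, whose $V_1$-factor has bounded diameter and whose $\mathbf{C}$-factor is $\approx|z_0|^{-2/d}|dz|^2$; passing to $D^{-2}g=D^{2/d-2}(D^{-2/d}g)$ shrinks the $V_1$-factor, and each $z$-ball, to diameter $o(D)$, and shows that $(z,x)\mapsto z$ is again an $\epsilon D$-approximation from $\mathcal W_D$ onto $\mathcal A_D$ (using that $f$ is surjective with controlled minimal $R$ on its fibers to get $\epsilon D$-density).

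Finally I would glue: set $\Psi=G$ on $\mathcal U_D$ and $\Psi(z,x)=(z,0)$ on $\mathcal W_D$, truncating $|z|$ into $(D/2,2D)$. On the overlap $\{\Lambda\rho^{1/d}<R<2\Lambda\rho^{1/d}\}$ the image of $G$ lies within $O(\Lambda\rho^{1/d})=o(D)$ of the singular ray, so the two definitions agree up to $o(D)$; a cutoff interpolation (or the elementary fact that a map which is an $\epsilon D$-approximation on each member of a two-set cover with $o(D)$-close images on the overlap is an $O(\epsilon D)$-approximation globally) yields an $\epsilon D$-approximation from $\{\rho\in(D/2,2D)\}\cap X_1$ onto $\{\rho\in(D/2,2D)\}\cap X_0$. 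The main obstacle is the collapsing analysis on $\mathcal W_D$: one must check that the transverse directions genuinely shrink relative to the scale $D$ — which is exactly where $1/d<1$ enters — and that the collapsed pictures coming from $X_1$ (through $V_1$) and from $X_0$ (through $V_0$) are the \emph{same} annulus $\mathcal A_D\subset\mathbf{C}$, matched consistently with the conical region along $\{R\sim\Lambda\rho^{1/d}\}$.
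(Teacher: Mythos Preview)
Your overall strategy matches the paper's: define $\Psi=G$ (the nearest-point projection) on $\{R>\Lambda\rho^{1/d}\}$ and $\Psi(z,x)=(z,0)$ on the complement, and use $d>1$ to see that the collapsing region has transverse diameter $O(\Lambda D^{1/d})=o(D)$. The paper also matches the two pieces on the overlap exactly as you describe.

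The place where you are too casual is the ``elementary fact'' invoked in the gluing step; this is precisely where the content of the argument lies, and it is not automatic. From the bi-Lipschitz control on paths staying in $\mathcal U_D$ you get only one inequality, $d_{X_1^D}(x_1,x_2)\le d_{X_0^D}(G(x_1),G(x_2))+\epsilon$, because a minimizing geodesic in the annulus between two points of $\mathcal U_D$ may well dip into $\mathcal W_D$; your remark about length metrics versus restricted metrics does not address this. The paper handles the reverse inequality explicitly: if the minimizing curve enters $S_\Lambda=\mathcal W_D$, take its first and last points $x_1',x_2'$ in $\overline{S_\Lambda}$; the segment between them must remain in $S_{2\Lambda}$ (since on $S_{2\Lambda}\setminus S_\Lambda$ the metric is close to the cone $X_0$, so escaping would be too long), and on $S_{2\Lambda}$ Proposition~\ref{prop:R<L} gives $|d\pi|<1+\epsilon$ for the projection $\pi$ to $\mathbf C$, whence the length of this middle segment dominates $|z(x_1')-z(x_2')|-\epsilon\ge d_{X_0^D}(G(x_1'),G(x_2'))-2\epsilon$. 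Combining with the two outer segments and the triangle inequality yields the missing lower bound. So the genuine obstacle is not the collapsing analysis on $\mathcal W_D$ itself---that part is indeed straightforward once Proposition~\ref{prop:R<L} is in hand---but controlling geodesics between points of $\mathcal U_D$ that pass through $\mathcal W_D$.
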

\begin{proof}
 It will be useful to
introduce the notation 
\[ S_{\Lambda} = \{ x\in X_1\,:\, R < \Lambda\rho^{1/d} \}, \]
which we should think of as a region near the singular rays in $X_0$.
  Let us denote by $X_1^D, X_0^D$ the two annular regions that we are
  considering, with their metrics scaled down by a factor of $D$. 
  Our goal is to define a map $G : X_1^D \to X_0^D$, such
  that for sufficiently large $D$ we have 
  \[ \label{eq:Gest1} |d(G(x_1),G(x_2)) - d(x_1,x_2)| < \epsilon, \]
  and the image $G(X_1^D)$ is $\epsilon$-dense in $X_0^D$. 

  Let $\Lambda$ be a large constant. On the set $X_1^D\setminus
  S_\Lambda$ we define $G$ by the nearest point projection, as in
  Proposition~\ref{prop:R>L}, 
  while on the set $X_1^D\cap S_\Lambda$ we define $G$ by
  projection onto the $z$-axis. From Proposition~\ref{prop:R>L} we
  know that on $X_1^D\setminus S_\Lambda$ the
  rescaled metrics $D^{-2}\omega$ and $D^{-2}G^*\omega_0$ can be made
  arbitrarily close by choosing $\Lambda,D$ large. We
  can therefore assume that for any curve $\gamma$ in this region
  \[ \label{eq:lg2} |\mathrm{length}_{D^{-2}\omega}(\gamma) -
    \mathrm{length}_{D^{-2}G^*\omega_0}(\gamma)| < \epsilon, \]
  and in particular if $x_1,x_2$ are in this region, then 
  \[ d_{X_1^D}(x_1,x_2) < d_{X_0^D}(G(x_1),G(x_2)) + \epsilon. \]
  Note that the reverse inequality is not yet clear since the shortest
  curve between $x_1,x_2$ in $X_1^D$ may pass through $S_\Lambda$. 

  At the same time, on $X_1^D\cap S_{2\Lambda}$,
  Proposition~\ref{prop:R<L} shows that after suitable scalings we can
  approximate $X_1^D$ by the product metric $\mathbf{C}\times V_1$. In
  particular, choosing first $\Lambda$ and then $D$ sufficiently
  large,   we can assume that the projection $\pi  :
  X_1^D\cap S_{2\Lambda}\to \mathbf{C}$ onto the $\mathbf{C}$ factor satisfies $|d\pi|
  < 1 + \epsilon$. It follows from this that for any curve $\gamma$ in
  this region 
  \[  \label{eq:lg1} \mathrm{length}_{X_1^D} (\gamma) \geq (1-\epsilon)
    \mathrm{length}_{\mathbf{C}}(\pi\circ\gamma). \]
  Note that if $x_1, x_2 \in S_\Lambda$, then the
  shortest curve between them in $X_1^D$ will remain in the region
  $S_{2\Lambda}$ since on the ``annular'' region
  $S_{2\Lambda}\setminus S_\Lambda$ the metric can be made
  arbitrarily close to the cone $X_0^D$ (if $\Lambda$ and $D$ are sufficiently
  large). It follows using \eqref{eq:lg1} that
  \[ d_{X_1^D}(x_1,x_2) > d_{\mathbf{C}}(\pi(x_1), \pi(x_2)) -
    \epsilon. \]

  In addition if we fix a reference point $o\in V_1$, then for any
  $(p,z)\in X_1^D\cap S_\Lambda$ we have
  \[ \label{eq:Gest2} d_{X_1^D}( (p,z), (o,z) ) < C\Lambda D^{1/d-1} < \epsilon, \]
  for some constant $C$ (recall that we choose $D$ after choosing
  $\Lambda$).  
  It then follows that if $(p_1,z_1), (p_2, z_2)$ are two points in
  this region, we have 
  \[ d_{X_1^D}((p_1,z_1) , (p_2,z_2)) < |z_1-z_2| + \epsilon, \]
  and similarly
  \[ d_{X_0^D}((p_1,z_1) , (p_2,z_2)) < |z_1-z_2| + \epsilon. \]
  We have therefore shown that if $x_1,x_2 \in S_\Lambda$, then 
  \[ |d_{X_1^D}(x_1,x_2) - d_{X_0^D}(G(x_1), G(x_2))| < \epsilon. \]
  
  It remains to bound the distance from below between points
  $x_1,x_2\in X_1^D\setminus S_\Lambda$. Let $\gamma$ be the shortest
  curve from $x_1$ to $x_2$. We only have to deal with the possibility that
  this shortest curve enters the closure of the region $S_\Lambda$. 
  Let $x_1'$ be the first, and $x_2'$ be the
  last point along the curve in this region. Then by the observations
  above, the segment of $\gamma$ joining $x_1',x_2'$ must lie in
  $S_{2\Lambda}$,  and therefore we have
  \[ d_{X_1^D}(x_1', x_2') > d_{\mathbf{C}}(\pi(x_1'), \pi(x_2')) -
    \epsilon > d_{X_0^D}(G(x_1'), G(x_2')) - 2\epsilon. \]
  Then by the triangle inequality and the estimate \eqref{eq:lg2} we
  get
  \[ d_{X_1^D}(x_1,x_2) > d_{X_0^D}(G(x_1),(x_2)) - \epsilon \]
  for a larger value of $\epsilon$. This shows \eqref{eq:Gest1}. The
  fact that $G(X_1^D)$ is $\epsilon$-dense in $X_0^D$ follows from the
  inequality analogous to \eqref{eq:Gest2} for $X_0^D$. 
\end{proof}

It follows from this result in particular that if $o\in X_1$ is a
fixed basepoint, then the distance function $d(o,\cdot)$ is uniformly
equivalent to $\rho$. 
Later on we will also need that $(X_1,\omega)$ satisfies the following
``relatively connected annuli'', or RCA, condition. This is an easy
consequence of the fact that the tangent cone at infinity of
$(X_1,\omega)$ is a metric cone over a compact connected length space. 
\begin{prop}\label{prop:RCA}
  For sufficiently large $D$, any two points $x_1,x_2\in X_1$ with
  $d(o,x_i)=D$ can be joined by a curve of length at
  most $CD$, lying in the annulus $B(o, CD) \setminus B(o,C^{-1}D)$,
  for a uniform constant $C$.   
\end{prop}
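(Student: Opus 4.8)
The plan is to derive the \textbf{RCA} property from the fact, established in Proposition~\ref{prop:tancone}, that the tangent cone of $(X_1,\omega)$ at infinity is the metric cone $X_0=\mathbf{C}\times V_0=C(\Sigma)$, whose cross-section $\Sigma$ is the spherical join of the unit circle with the link $L$ of $V_0$. The only structural input needed is that $\Sigma$ is compact and connected: $L$ is connected, being the link of an irreducible normal singularity, and has finite diameter, being compact, so the same holds for $\Sigma$. Hence any two points of the unit sphere $\{1\}\times\Sigma\subset C(\Sigma)$ are joined by a curve contained in that sphere of length at most $\ell_0:=\operatorname{diam}(\Sigma)$.

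Suppose the conclusion fails. Then there are $D_j\to\infty$ and points $x_1^{(j)},x_2^{(j)}\in X_1$ with $d(o,x_i^{(j)})=D_j$ which cannot be joined by any curve of length $\le jD_j$ lying in $B(o,jD_j)\setminus B(o,j^{-1}D_j)$. Rescaling the metric by $D_j^{-2}$, the points $x_i^{(j)}$ lie on the unit sphere about $o$, and by Proposition~\ref{prop:tancone} the pointed spaces $(X_1,D_j^{-2}g,o)$ converge in the pointed Gromov--Hausdorff sense to $C(\Sigma)$ with its vertex $\bar o$. Passing to a subsequence, $x_i^{(j)}$ converge to points $\bar x_i$ with $d(\bar o,\bar x_i)=1$, and by the previous paragraph there is a curve $\bar\gamma$ joining $\bar x_1$ to $\bar x_2$, of length at most $\ell_0$, contained in $B(\bar o,2)\setminus B(\bar o,\tfrac12)$.

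The remaining step is to lift $\bar\gamma$ to the approximating spaces. Fixing a sufficiently fine partition of $\bar\gamma$ and using that $(X_1,D_j^{-2}g)$ is a length space which is $\epsilon_j$-close to $C(\Sigma)$ with $\epsilon_j\to0$, the standard chaining argument produces, for all large $j$, a curve in $(X_1,D_j^{-2}g)$ from $x_1^{(j)}$ to $x_2^{(j)}$ of length at most $2\ell_0$ and contained in $B(o,3)\setminus B(o,\tfrac13)$. Undoing the rescaling, this curve has length at most $2\ell_0 D_j$ and lies in $B(o,3D_j)\setminus B(o,\tfrac13 D_j)$; for $j$ larger than $\max(2\ell_0,3)$ this contradicts the choice of $x_1^{(j)},x_2^{(j)}$. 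Thus the RCA condition holds with a constant $C$ depending only on $\operatorname{diam}(\Sigma)$.

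The one point requiring care is this last lifting step: transferring a curve from a Gromov--Hausdorff limit to the approximating spaces while keeping control both of its length and of the annulus containing it. This works because $X_1$ is a genuine length space (a complete connected Riemannian manifold) and because in $C(\Sigma)$ the annuli at other scales are uniformly separated, so near-minimizing curves between nearby points close to $\bar\gamma$ cannot escape a slightly enlarged annulus. One could instead avoid the limiting argument and argue by hand: use Proposition~\ref{prop:R<L} to drag each $x_i$ off the singular rays along a curve of length $O(D^{1/d})$, connect the resulting points inside $X_0$ by moving radially in the $V_0$-factor and then along the bounded connected link, and finally pull this curve back to $X_1$ through the near-isometry $G$ of Proposition~\ref{prop:R>L}; the only delicacy there is keeping the various thresholds $\Lambda$ consistent so that the intermediate curve stays in the region where $G$ is invertible.
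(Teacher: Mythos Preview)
Your proof is correct and follows exactly the approach indicated by the paper, which does not give a detailed argument but simply asserts that the RCA property ``is an easy consequence of the fact that the tangent cone at infinity of $(X_1,\omega)$ is a metric cone over a compact connected length space.'' Your write-up fills in precisely those details, and your identification of the lifting step as the one point requiring care (together with the sketched direct alternative via Propositions~\ref{prop:R>L} and~\ref{prop:R<L}) is apt.
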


\section{The Laplacian on the model spaces}\label{sec:modelspace}
Here we consider two model problems, namely the Laplacians on the cone
$X_0 = \mathbf{C}\times V_0$, 
and on the product $\mathbf{C}\times V_1$. These will be the
building blocks for inverting the Laplacian on $X_1$ in
Section~\ref{sec:Laplaceinverse}. As a preliminary
step we consider the Laplacian on the product $\mathbf{C} \times
V_0$, but using weighted spaces which involve a weight function only
in the $V_0$ factor. 

\subsection{The model space $\mathbf{C}\times V_0$}
Consider the product metric $g = g_{\mathbf{C}} + g_{V_0}$,
and let $r$ be the distance function on $V_0$ as before. We define
weighted spaces on $\mathbf{C}\times V_0$ in the usual way,
analogously to \eqref{eq:weightedHolder} with the
weight function given by $r$. A more concise definition is obtained by
conformally scaling $g$ to $r^{-2}g$. In terms of this scaled metric
our weighted spaces can be written as
\[ C^{k,\alpha}_\tau = r^\tau C^{k,\alpha}_{r^{-2}g}, \]
with corresponding norm given by
\[ \Vert f\Vert_{C^{k,\alpha}_\tau} = \Vert r^{-\tau}
  f\Vert_{C^{k,\alpha}_{r^{-2}g}}. \]

Our goal is to show that the kernel of the Laplacian on
$(\mathbf{C}\times V_0, g)$ is trivial in the weighted H\"older space
$C^{k,\alpha}_\tau$, for $\tau\in (4-2n, 0)$. Note that the (real) dimension
of $V_0$ is $m=2n-2$, so $\tau\in (2-m, 0)$ is the usual ``good''
range of weights for the Laplacian on $V_0$. We use the Fourier
transform in the $\mathbf{C}$ direction in a similar way to what was
done by Walpuski~\cite{Wal13} which in turn is based on
Brendle~\cite{Bre03} (see also Mazzeo-Pacard~\cite{MP96}), 
although the details will be slightly different. 

For any function $\chi$ on $\mathbf{C}\times V_0$ 
with sufficient decay in the $\mathbf{C}$ direction we define the
Fourier transform
\[ \hat{\chi}(\xi, x) = \int_{\mathbf{C}} \chi(z, x) e^{-\sqrt{-1}\xi\cdot
    z}\, dz, \]
where we think of $\mathbf{C}$ as $\mathbf{R}^2$. 

\begin{prop}\label{prop:solve1}
  Let $\chi$ be a smooth function on $\mathbf{C}\times V_0$, such that
  $\hat{\chi}$ has compact support away from $\{0\} \times V_0$. 
  In particular $\chi$ is supported
  in $\mathbf{C}\times K$ for a compact $K\subset V_0$. 
  We can then find $f$ solving
  $\Delta f = \chi$, and moreover
  \begin{enumerate}
  \item $f \in C^{k,\alpha}_\tau$ for all $\tau\in (2-m,0)$,
  \item In addition $f$ decays exponentially in the $\mathbf{C}$ direction. More
    precisely for any $a > 0$ there is a constant $C$ such that $\Vert
    f \Vert_{C^{k,\alpha}_\tau(|z| > A)}
    < C(1 + A)^{-a}$ for any $A > 0$ and $\tau\in (2-m,0)$.  
  \end{enumerate}
\end{prop}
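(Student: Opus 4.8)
The plan is to use the Fourier transform in the $\mathbf{C}$ direction to reduce the equation $\Delta f = \chi$ on $\mathbf{C}\times V_0$ to a family of ODEs-with-parameter on $V_0$. Writing $\Delta = \Delta_{\mathbf{C}} + \Delta_{V_0}$ and taking the Fourier transform in $z$, the equation becomes $(\Delta_{V_0} - |\xi|^2)\hat{f}(\xi,\cdot) = \hat{\chi}(\xi,\cdot)$ on $V_0$ for each fixed $\xi \in \mathbf{C}$. Since $\hat\chi$ is smooth with compact support away from $\{0\}\times V_0$, for each $\xi$ in the support (where $|\xi| \geq c_0 > 0$) the operator $\Delta_{V_0} - |\xi|^2$ is invertible on $L^2(V_0, g_{V_0})$ because $\Delta_{V_0}$ is a nonnegative self-adjoint operator on the complete manifold $V_0$ with a conical end, so $|\xi|^2$ lies in its resolvent set. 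This gives a unique solution $\hat{f}(\xi,\cdot) \in L^2(V_0)$, and one then sets $f(z,x) = (2\pi)^{-2}\int_{\mathbf{C}} \hat{f}(\xi,x)e^{\sqrt{-1}\xi\cdot z}\,d\xi$.

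Next I would establish the two claimed properties. For the exponential decay in $z$, the point is that the resolvent $(\Delta_{V_0} - |\xi|^2)^{-1}$, and all its $\xi$-derivatives, depend smoothly on $\xi$ on the (compact, $0$-avoiding) support of $\hat\chi$, with bounds uniform there; hence $\hat{f}(\xi,\cdot)$ is a smooth compactly supported $C^{k,\alpha}(V_0)$-valued function of $\xi$. Integrating by parts $N$ times in $\xi$ in the inversion integral produces an extra factor $|z|^{-N}$, giving decay faster than any polynomial — i.e. the estimate $\Vert f\Vert_{C^{k,\alpha}_\tau(|z|>A)} < C(1+A)^{-a}$. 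The elliptic regularity and interior Schauder estimates, applied on unit balls in $\mathbf{C}\times V_0$ using the product structure and the fact that the geometry is of bounded type away from the cone point, upgrade the $L^2$/$C^0$ bounds on $\hat f$ to $C^{k,\alpha}$ bounds, with the weight $r^{-\tau}$ behavior controlled near the cone tip of $V_0$.

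The membership $f \in C^{k,\alpha}_\tau$ for all $\tau \in (2-m,0)$ is really a statement about the behavior of $f$ as $r\to 0$ (the cone point of $V_0$) and as $r\to\infty$ along the conical end — uniformly in $z$. Away from a neighborhood of the cone point and for $|\xi|$ bounded away from $0$, the solution $\hat f(\xi,\cdot)$ inherits the decay of the Green's operator for $\Delta_{V_0}$ at the cone point: on the cone $C(L)$ over the link $L$, the indicial roots of $\Delta_{V_0}$ acting on $C^{k,\alpha}_\tau$ are exactly the exponents $\gamma$ with $\gamma(\gamma+m-2) \in \mathrm{Spec}(\Delta_L)$, and the interval $(2-m,0)$ is free of indicial roots (the two roots $0$ and $2-m$ coming from the constant eigenfunction on $L$ are its endpoints), so the standard asymptotically-conical Fredholm theory à la Lockhart–McOwen gives that $\Delta_{V_0}$ is an isomorphism $C^{k+2,\alpha}_\tau \to C^{k,\alpha}_{\tau-2}$ for such $\tau$ and hence $\hat f(\xi,\cdot) \in C^{k,\alpha}_\tau(V_0)$ with norm bounded uniformly for $\xi$ in the support of $\hat\chi$. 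Combining with the exponential decay in $z$ yields $f \in C^{k,\alpha}_\tau(\mathbf{C}\times V_0)$.

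The main obstacle I expect is not any single estimate but the bookkeeping needed to make the parameter-dependence uniform: one must check that the resolvent $(\Delta_{V_0}-|\xi|^2)^{-1}$, together with the weighted Schauder estimates for it, is controlled uniformly as $\xi$ ranges over the support of $\hat\chi$ (including, in principle, letting $|\xi|\to 0$, which is why the hypothesis that $\hat\chi$ vanishes near $\{0\}\times V_0$ is essential — it is precisely at $\xi = 0$ that $\Delta_{V_0} - |\xi|^2 = \Delta_{V_0}$ has nontrivial cokernel in the relevant weighted space, namely the constants). Since the support is compact and bounded away from $0$, this uniformity is available, but spelling out that the weighted elliptic estimates and the integration-by-parts argument fit together — in particular that differentiating in $\xi$ does not destroy the weighted bounds near the cone point — is the delicate part of the argument.
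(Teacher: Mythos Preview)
Your approach is correct and follows the same Fourier-transform-in-$\mathbf{C}$ strategy as the paper; the differences are in execution rather than substance. The paper is more concrete: instead of invoking the $L^2$ resolvent and Lockhart--McOwen theory, it separates variables on the cone $V_0$ using the spectral decomposition of the link, reducing \eqref{eq:FTeqn} to the family of modified-Bessel-type ODEs \eqref{eq:ODEpolar} in $r$, and reads off directly that the fundamental solution is bounded at $r=0$ and exponentially decaying at $r=\infty$. This hands-on approach makes the uniform-in-$\xi$ bounds (and their differentiability in $\xi$, obtained by differentiating \eqref{eq:FTeqn}) transparent, which is exactly the bookkeeping you flag as the main obstacle.

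Two small caveats on your abstraction. First, $V_0$ is an exact cone with a singular vertex, not a smooth manifold with one conical end; your appeal to self-adjointness and to Lockhart--McOwen should be phrased for the two-ended (small-$r$ and large-$r$) conical picture, or equivalently on the cylinder $\mathbf{R}\times L$. Second, the term $-|\xi|^2$ is a lower-order perturbation of $\Delta_{V_0}$ only near $r=0$; at the large-$r$ end it is \emph{not} subordinate to the conical operator, so the Lockhart--McOwen isomorphism for $\Delta_{V_0}$ does not directly transfer. The correct reason the large-$r$ behavior is harmless is that $\hat\chi(\xi,\cdot)$ has compact support in $V_0$ and $|\xi|^2>0$ acts as a mass term, forcing exponential decay of $\hat f(\xi,\cdot)$ as $r\to\infty$ --- exactly what the paper's ODE analysis makes explicit.
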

\begin{proof}
  Taking the Fourier transform, the equation that we need to solve is
  \[ \label{eq:FTeqn} \Delta_{V_0} \hat{f}(\xi, x) - |\xi|^2 \hat{f}(\xi, x) =
    \hat{\chi}(\xi, x). \]
  For each fixed $\xi \ne 0$ we can solve this in polar coordinates on
  $V_0$, using the spectral decomposition for the link of $V_0$. 
  It reduces to analyzing ODEs of the form
  \[ \label{eq:ODEpolar}\partial_r^2 \hat{f} + \frac{m-1}{r} \partial_r \hat{f} +
    \frac{1}{r^2} \lambda \hat{f} - |\xi|^2 \hat{f} =
    \hat{\chi}_\lambda, \]
  where $\lambda \leq 0$ are the eigenvalues of the Laplacian on the
  link of $V_0$. Note that the $\hat{\chi}_\lambda$ have compact
  support in $r$. For $\xi\ne 0$ the ODE has a fundamental solution
  which decays
  exponentially as $r\to\infty$, and is bounded near $r=0$. Using this
  we obtain solutions $\hat{f}(\xi, x)$ of \eqref{eq:FTeqn} decaying
  exponentially as $r\to\infty$, and in particular
  satisfying bounds $\Vert
  \hat{f}(\xi,\cdot)\Vert_{C^0_\tau} \leq C\Vert \hat{\chi}(\xi,
  \cdot)\Vert_{C^0}$, where $C$ is uniform as long as $\xi$ is
  bounded. 

  Our results follow by taking the inverse Fourier transform of
  $\hat{f}$. Item (1) above follows from the properties of the ODE
  solutions. Item (2) follows from the fact that if $\hat\chi$ has compact
  support away from $\{0\}\times V_0$, then $\hat{f}$ is smooth when
  viewed as a function $\mathbf{R}^2\to C^0_\tau(V_0)$. 
  This can be seen by differentiating
  \eqref{eq:FTeqn} with respect to $\xi$. We find that any derivative
  $\partial^l_\xi\hat{f}$ satisfies an equation of the
  form
  \[ \label{eq:aaa1} \Delta_{V_0} \partial^l_\xi \hat{f}(\xi, x) -
    |\xi|^2\partial^l_\xi =
    \partial^l_\xi\hat{\chi}(\xi,x) + \sum_{|i| < |l|} a_i(\xi) \partial^i_\xi
    \hat{f}(\xi,x) \]
  in terms of lower order derivatives. Inductively we find that each
  partial derivative $\partial^l_\xi\hat{f}(\xi,x)$ is bounded near $r=0$
  and decays exponentially as $r\to \infty$, using that $\hat{f}(\xi,x)$ is
  identically zero for $\xi$ in a neighborhood of 0. 

  Taking the invese Fourier transform we 
  obtain exponential decay in $|z|$ of the $C^0_\tau$ norms of $f$ in
  the vertical slices $\{z\}\times V_0$. The decay of the 
  derivatives of $f$ is obtained from this using Schauder estimates since
  $\chi$ is compactly supported in the $V_0$ direction.
\end{proof}

\begin{cor}\label{cor:ker0}
  Suppose that $\Delta f =0$, for some $f\in
  C^{k,\alpha}_\tau(\mathbf{C}\times V_0)$ with
  $\tau\in (2-m,0)$. Then $f=0$. 
\end{cor}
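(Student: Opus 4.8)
The plan is to derive Corollary~\ref{cor:ker0} from Proposition~\ref{prop:solve1} by a Fourier-transform and spectral-decomposition argument, rather than by a Liouville-type maximum-principle argument. Suppose $\Delta f = 0$ with $f\in C^{k,\alpha}_\tau(\mathbf{C}\times V_0)$, $\tau\in(2-m,0)$. The key point is that $f$ has polynomial-type growth (it lies in a weighted space), so its Fourier transform $\hat f(\xi,x)$ in the $\mathbf{C}$-direction makes sense as a distribution in $\xi$, and the equation becomes $\Delta_{V_0}\hat f(\xi,x) - |\xi|^2 \hat f(\xi,x) = 0$ for each $\xi$, exactly as in \eqref{eq:FTeqn} with $\hat\chi = 0$.

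First I would treat the contribution of $\xi\neq 0$. For fixed $\xi\neq 0$, decomposing in the eigenfunctions of the link of $V_0$ reduces the equation to the homogeneous version of \eqref{eq:ODEpolar}, namely $\partial_r^2 u + \frac{m-1}{r}\partial_r u + \frac{\lambda}{r^2} u - |\xi|^2 u = 0$ with $\lambda\le 0$. This is (a modified-Bessel-type) ODE: one solution blows up exponentially as $r\to\infty$ and the other blows up (or is too singular) as $r\to 0$, so the only solution compatible with the growth bound coming from $f\in C^{k,\alpha}_\tau$ — which forces controlled polynomial behavior both near $r=0$ and at $r=\infty$ — is the zero solution. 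Hence $\hat f$ is supported on $\{0\}\times V_0$, i.e. $\hat f(\xi,x)$ is a finite sum of $\xi$-derivatives of the delta distribution $\delta_0(\xi)$ with coefficients depending on $x$. Taking the inverse Fourier transform, this means $f(z,x)$ is polynomial in $z$ (of degree bounded by the weight $\tau<0$, hence in fact of degree zero, so independent of $z$) with coefficients in $V_0$. More carefully: the weight condition $f\in C^{k,\alpha}_\tau$ with $\tau<0$ forces $f\to 0$ along each slice as $|z|\to\infty$ after the $r^{-\tau}$ rescaling stays bounded, which is incompatible with any nonzero polynomial dependence on $z$; so $f(z,x)=f(x)$ is independent of $z$.

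Then I would finish with the pure $V_0$ problem. Since $f$ is independent of $z$, the equation $\Delta f = 0$ becomes $\Delta_{V_0} f = 0$ on the cone $V_0$, with $f\in C^{k,\alpha}_\tau(V_0)$ and $\tau\in(2-m,0)$. This is the classical statement that the Laplacian on a metric cone of real dimension $m$ has no harmonic functions in the weight interval $(2-m,0)$: expanding $f$ in link-eigenfunctions $\phi_\lambda$ with eigenvalue $-\lambda\ge 0$, the radial coefficients satisfy $\partial_r^2 u + \frac{m-1}{r}\partial_r u + \frac{\lambda}{r^2}u = 0$, whose solutions are $r^{\mu_\pm(\lambda)}$ with $\mu_\pm$ the two indicial roots; for $\lambda=0$ these are $r^0$ and $r^{2-m}$, and for $\lambda<0$ one root is $\ge 0$ and the other is $\le 2-m$. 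A solution decaying like $r^{\tau}$ with $2-m<\tau<0$ (strictly between the two indicial roots for every $\lambda$) must vanish identically. Therefore $f=0$.

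The main obstacle is making the ``$\hat f$ supported at $\xi=0$ forces $f$ independent of $z$'' step rigorous: one must justify taking the Fourier transform of a function with only polynomial growth, identify $\hat f$ as a compactly supported distribution on the $\xi$-line (valued in functions of $x$), and argue that the growth weight $\tau<0$ rules out the polynomial-in-$z$ solutions that a distribution supported at $\{0\}$ would produce. An alternative, perhaps cleaner, route that avoids distributional Fourier transforms is to run a Schauder/scaling argument directly: use elliptic estimates on dyadic annuli in $\mathbf{C}\times V_0$ together with the homogeneity of the model to upgrade the weighted bound, then appeal to the indicial-root analysis on the cone $V_0$ uniformly in the $\mathbf{C}$-variable; but the Fourier approach parallels Proposition~\ref{prop:solve1} most directly and I would present it that way. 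The remaining steps — the ODE asymptotics and the indicial-root bookkeeping — are routine.
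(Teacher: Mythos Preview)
Your overall strategy matches the paper's: take the Fourier transform in the $\mathbf{C}$-direction, show $\hat f$ is supported at $\xi=0$, deduce that $f$ is polynomial in $z$ and hence (by boundedness) independent of $z$, then finish with the indicial-root analysis on the cone $V_0$. The last two steps are correct and essentially as in the paper.

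The gap is in your argument that $\hat f$ vanishes away from $\xi=0$. The weight $r$ lives entirely on the $V_0$ factor, so $f\in C^{k,\alpha}_\tau$ gives \emph{no} decay in the $\mathbf{C}$-direction; $\hat f$ is therefore only a tempered distribution in $\xi$ (valued in a weighted space on $V_0$), and you cannot evaluate it at a fixed $\xi\neq 0$ and run the Bessel-type ODE argument pointwise. Your analysis of the homogeneous equation $\partial_r^2 u + \frac{m-1}{r}\partial_r u + \frac{\lambda}{r^2}u - |\xi|^2 u = 0$ is correct for an honest function of $\xi$, but says nothing about a distribution. This is exactly the step where Proposition~\ref{prop:solve1} is needed, and you have not indicated how to use it. The paper's device is a duality argument: to show $\hat f(g)=0$ for a test function $g(\xi,x)$ supported away from $\{0\}\times V_0$, apply Proposition~\ref{prop:solve1} with $\chi=\hat g$ to produce $h$ satisfying $\Delta h=\hat g$ with $h\in C^{k,\alpha}_{\tau'}$ (for a suitable dual weight) and with fast decay in the $\mathbf{C}$-direction; then
\[
\hat f(g)=\int_{\mathbf{C}\times V_0} f\,\hat g = \int_{\mathbf{C}\times V_0} f\,\Delta h = \int_{\mathbf{C}\times V_0} h\,\Delta f = 0,
\]
the integration by parts being justified precisely by the decay properties guaranteed by Proposition~\ref{prop:solve1}. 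You have also misidentified the main obstacle: the step from ``$\hat f$ supported at $0$'' to ``$f$ polynomial in $z$'' is standard distribution theory, whereas establishing the support condition is where the real work lies. Finally, your sentence ``$f\to 0$ along each slice as $|z|\to\infty$'' is not correct: the weight only gives $|f|\le C r^\tau$ uniformly in $z$, i.e.\ boundedness in $z$, which is what rules out nonconstant polynomial dependence.
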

\begin{proof}
  We will argue by taking the distributional Fourier transform of $f$,
  and showing that it is supported at the origin. It will be more
  convenient to use weighted $L^2$-spaces than the H\"older
  spaces. For this let $\tau_1,\tau_2$ be such that $\tau_1 < \tau <
  \tau_2$, and define a weight function $\sigma$ on $V_0$ such that
  $\sigma = r^{\tau_2}$ for large $r$, and $\sigma = r^{\tau_1}$ for
  small $r$. We define the weighted $L^2_\sigma$ space on $V_0$ using the norm
  \[ \Vert F \Vert_{L^2_\sigma}^2 = \int_{V_0} |F|^2 \sigma^{-2}
    r^{-m}\,dV. \]
  Our choice of $\tau_1,\tau_2$ ensures that we can view $f$ as a bounded map
  \[ f : \mathbf{C} \to L^2_\sigma(V_0). \]
  In addition the dual space under the $L^2$ pairing is $L^2_{\sigma'}$, where
  $\sigma'=\sigma^{-1}r^{-2m}$. 
  Let $\hat{f}$ be the Fourier transform of $f$ in the
  $\mathbf{C}$-direction as above, so now $\hat{f}$ is a distribution
  valued in $L^2_\sigma$. If $g$ is a
  smooth map $\mathbf{C}\to L^2_{\sigma'}$ of compact support, 
  then the pairing $\hat{f}(g)$ is
  defined as
  \[ \hat{f}(g) = \int_\mathbf{C} \langle f, \hat{g}\rangle\, dz. \]

  We first claim that $\hat{f}$ is a distribution of finite order (in
  fact order at most 4). For this suppose that $g : \mathbf{C}\to
  L^2_{\sigma'}$ has compact support $K$, and
  satisfies $|\nabla^i g| \leq A$ for $i\leq 4$, where
  the derivatives are in the $\mathbf{C}$-direction. Then the usual
  integration by parts argument shows that $\Vert\hat{g}(\xi)\Vert
  \leq C_{K,A}(1+|\xi|)^{-4}$, for a constant depending on $K,A$, and so
  \[ |\hat{f}(g)| \leq \int_\mathbf{C} \Vert
    f(z,\cdot)\Vert_{L^2_\sigma} \Vert \hat{g}(z)\Vert_{L^2_{\sigma'}}\, dz \leq
    C_{K,A}, \]
  since $f$ is bounded. 

  Next we claim that $\hat{f}$ is supported at the origin, i.e. if
  $g$ is smooth (in the $\mathbf{C}$-directions) and
  has compact support away from the origin, then
  $\hat{f}(g)=0$. We can approximate $g$ in the space $C^4(\mathbf{C},
  L^2_{\sigma'})$ with smooth functions that have compact support away
  from $\{0\}\times V_0$, and so we assume that $g$ is of this
  type. We can then apply Proposition~\ref{prop:solve1} to the
  function $\chi = \hat{g}$, and so we find $h$ on $\mathbf{C}\times
  V_0$ satisfying $\Delta h =
  \hat{g}$, such that $h\in C_\tau^{k,\alpha}$, and $h$ decays
  exponentially in the $\mathbf{C}$-direction. It follows that
  \[ \hat{f}(g) &= \int_{\mathbf{C}\times V_0} f\hat{g}\,dV \\
    &= \int_{\mathbf{C}\times V_0} f\Delta h\,dV \\
    &= \int_{\mathbf{C}\times V_0} h\Delta f\,dV = 0, \]
  where the integration by parts is justified by the decay properties
  of $h$. 

  Since $\hat{f}$ is supported at the origin, it follows that
  $\hat{f}$ is a linear combination of derivatives of delta functions
  at the origin, with coefficients in $L^2_\sigma$. This follows the
  usual argument from the scalar case (see
  Rudin~\cite [Theorem 6.25]{Rudin}). Therefore we have
  \[ f(z) = \sum_{i,j=1}^l z^i\bar{z}^j f_{ij}, \]
  for $f_{ij}\in L^2_\sigma$. Since $f$ is bounded, we have
  $f_{ij}=0$ unless $i=j=0$,
  and so $f$ is independent of the $z$-variable. The result
  now follows since the ODEs \eqref{eq:ODEpolar} with $\xi=0,
  \hat{\chi}_\lambda=0$  have no solutions with growth rates in the
  range $(2-m,0)$.  
\end{proof}

\subsection{The model space $X_0$}
We now consider the cone $X_0$, or rather $X_0\setminus
\mathbf{C}\times \{0\}$. In polar coordinates 
the corresponding metric is
\[ g_{X_0} = dr^2 + r^2 h_L, \]
where $h_L$ is the metric on the link $L$. The link $(L, h_L)$ is
incomplete (since we  remove the singular rays
from $X_0$), and its metric completion $\overline{L}$ is the double
suspension of the link of the cone $V_0$, since $X_0 =
\mathbf{C}\times V_0$. In particular $\overline{L}$ has a circle of
singularities modeled on the cone $V_0$. In the language of
\cite{DM14}, $\overline{L}$ is a smoothly
stratified space of depth 1, and $h_L$ is an iterated edge
metric. 

We define weighted H\"older norms on $(L, h_L)$ in terms of the
weight function $w$ given by smoothing out the distance from the singular
stratum. As above, the weighted norms can be expressed concisely in
terms of the conformally scaled metric $w^{-2}h_L$: 
\[ \Vert f \Vert_{C^{k,\alpha}_{\tau}} = \Vert w^{-\tau}
  f\Vert_{C^{k,\alpha}_{w^{-2}h_L}}. \]
The norm depends on the precise smoothing chosen, but we end up with
equivalent norms. The main result about the Laplacian on $L$ that we
need is the following.
\begin{prop}\label{prop:Linvert}
  Let $a\in \mathbf{C}$, and consider the map
  \[ \Delta_{h_L} + a : C^{k,\alpha}_{\tau}(L) \to C^{k-2,\alpha}_{\tau-2}(L), \]
  for $\tau \in (2-m, 0)$ (recall $m=2n-2$ is the real dimension of
  $V_0$). If $\mathrm{Im}\, a \ne 0$, or if $a\in \mathbf{R}$ avoids a
  discrete set of values, then $\Delta + a$ is invertible. 
\end{prop}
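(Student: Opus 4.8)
The plan is to analyze $\Delta_{h_L} + a$ on the incomplete manifold $(L, h_L)$ by relating it, via separation of variables in the radial direction of the cone $X_0 = \mathbf{C}\times V_0$, to the model operator $\Delta_{\mathbf{C}\times V_0}$ already understood in Corollary~\ref{cor:ker0}. More concretely, a function $u$ on the cone written as $u(r,\theta) = r^{\gamma} v(\theta)$ satisfies $\Delta_{X_0} u = 0$ precisely when $v$ is an eigenfunction of $\Delta_{h_L}$ with eigenvalue $\gamma(\gamma + m - 2 + 2) $ (adjusting constants: here the cone has real dimension $2n = m+2$, so the indicial relation reads $\Delta_{h_L} v = -\gamma(\gamma + 2n - 2) v$). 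Thus for each real $a$, solvability of $(\Delta_{h_L} + a)v = g$ in $C^{k,\alpha}_\tau(L)$ is governed by whether $-a$ is an eigenvalue of $\Delta_{h_L}$ acting on the weighted space; since $(L, h_L)$ has compact metric completion $\overline{L}$ (a depth-$1$ iterated edge space), the operator $\Delta_{h_L}$ with the chosen incomplete-geometry boundary behavior has discrete spectrum in the relevant weighted space, so invertibility fails only for $a$ in a discrete set. The case $\mathrm{Im}\,a \ne 0$ is then handled separately and more robustly: $\Delta_{h_L}$ is non-negative and self-adjoint (in the appropriate $L^2$ sense on $\overline{L}$, with the weight range $\tau\in(2-m,0)$ chosen so that the natural closed extension is the one we want), so $\Delta_{h_L} + a$ has no kernel when $a\notin(-\infty,0]$, and in particular whenever $\mathrm{Im}\,a\ne 0$.

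The key steps, in order, are as follows. First I would set up the correct functional-analytic framework on $\overline{L}$: identify the weight range $\tau\in(2-m,0)$ as the range where the Laplacian on the cone link over $V_0$ has no indicial roots (this is the standard ``good range'' already invoked in the paragraph before Proposition~\ref{prop:solve1}), and observe that $w^{-2}h_L$ makes $L$ into a complete bounded-geometry space near the singular stratum modeled on a cylinder $\mathbf{R}_{\ge 0}\times(\text{link of }V_0)$, so that the mapping $\Delta_{h_L} + a : C^{k,\alpha}_\tau \to C^{k-2,\alpha}_{\tau-2}$ is semi-Fredholm with the usual elliptic estimates away from the stratum and the standard indicial analysis transverse to it. Second, I would establish the self-adjointness/non-negativity statement for the distinguished closed extension of $\Delta_{h_L}$ on the weighted $L^2$ space $L^2_\sigma(L)$ (with $\sigma$ interpolating powers of $w$ as in the proof of Corollary~\ref{cor:ker0}), using that $\tau\in(2-m,0)$ places us strictly between the two relevant indicial roots so there is a unique closed extension and an integration-by-parts identity holds with no boundary term — exactly as the decay of $h$ justified integration by parts in Corollary~\ref{cor:ker0}. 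This immediately yields injectivity for $\mathrm{Im}\,a\ne0$, and, combined with the Fredholm property and duality (the adjoint weighted space being the conjugate one, again as in Corollary~\ref{cor:ker0}), surjectivity and hence invertibility.

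Third, for real $a$: the family $a\mapsto \Delta_{h_L} + a$ is an analytic family of Fredholm operators of index zero (the index is independent of $a$ since lower-order perturbations don't change it, and it is zero by the self-adjoint duality pairing between $C^{k,\alpha}_\tau$ and $C^{k-2,\alpha}_{\tau-2}$ across the symmetric weight), which is invertible for non-real $a$, hence by the analytic Fredholm theorem it is invertible for all $a\in\mathbf{C}$ outside a discrete set; in particular outside a discrete subset of $\mathbf{R}$. I would also remark that this discrete set is exactly $\{-\lambda\}$ where $\lambda$ ranges over $L^2_\sigma$-eigenvalues of $\Delta_{h_L}$, equivalently the set of $a$ for which the cone $X_0$ carries a homogeneous harmonic function of the forbidden homogeneity, which ties back to the indicial roots computation.

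The main obstacle I expect is the careful treatment of the transverse (radial-in-$V_0$) behavior near the singular circle of $\overline{L}$: one must verify that the conformally rescaled operator genuinely extends to an elliptic operator of bounded geometry on the cylindrical end, that the weight range $\tau\in(2-m,0)$ both avoids indicial roots \emph{and} selects the self-adjoint extension, and that no new indicial roots are introduced by the term $+a$ (they aren't, since $a$ is lower order, but this needs to be said). Equivalently, the hard part is justifying that the one-parameter family is a legitimate analytic Fredholm family with constant index zero across the whole relevant weight — all of which is ``standard'' in the Lockhart–McOwen / iterated-edge framework but requires matching the present weighted H\"older setup to that machinery rather than re-deriving it from scratch.
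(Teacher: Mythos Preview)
Your proposal is correct and follows essentially the same route as the paper, which simply cites Mazzeo's edge-operator theory \cite{Maz91} to obtain that for $\tau\in(2-m,0)$ the image of $\Delta_{h_L}+a$ is $L^2$-orthogonal to its kernel and $\Delta_{h_L}$ has discrete real spectrum; your Fredholm-plus-self-adjointness-plus-analytic-Fredholm unpacking is exactly what that citation encodes. One small correction: your opening paragraph invokes Corollary~\ref{cor:ker0} via separation of variables on the cone $X_0$, but that corollary concerns the \emph{product} metric on $\mathbf{C}\times V_0$, which in the paper's logic enters only as a blowup model near the singular stratum of $L$ (see Proposition~\ref{prop:closedrange1}), not through the cone's radial separation --- this framing is not actually needed for the argument you carry out.
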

\begin{proof}
  This follows from the Fredholm theory for edge operators of
  Mazzeo~\cite{Maz91}. It implies that for our range of weights
  the image of $\Delta + a$ is $L^2$-orthogonal to its kernel, and
  moreover $\Delta$ has discrete real spectrum.  
\end{proof}

We now consider the cone $X_0$ over $L$. It is more convenient to
conformally scale the cone metric $g_{X_0}$ to a cylinder, using the
radial distance function $r$: 
\[ \tilde{g}_{X_0} = r^{-2} g_{X_0} = dt^2 + h_L, \]
where $t= \ln r$. The equation $\Delta_{g_{X_0}} f=u$ is then
equivalent to
\[ \Delta_{h_L}f + \partial_t^2 f + (2n-2) \partial_t f = e^{2t} u. \]
The relevant weighted H\"older norms, compatible with the definition
\eqref{eq:weightedHolder} can be formulated in terms of the further
conformal scaling $w^{-2}\tilde{g}_{X_0}$, using the weight function
$w$ on $L$ above. We then define
\[ \Vert f\Vert_{C^{k,\alpha}_{\delta, \tau}} = \Vert e^{-\delta t}
  w^{-\tau} f\Vert_{C^{k,\alpha}_{w^{-2}\tilde{g}_{X_0}}}. \]
Our goal is  to
show that the map 
\[ \mathcal{L} = \Delta_{h_L} + \partial_t^2 +(2n-2) \partial_t : C^{k,\alpha}_{\delta,
    \tau}(\mathbf{R}\times L) \to C^{k-2,\alpha}_{\delta,
    \tau-2}(\mathbf{R}\times L) \]
is invertible. It is convenient to conjugate the operator by
$e^{\delta t}$, to reduce to the case when $\delta = 0$. Writing
$\mathcal{L}_\delta(f) = e^{\delta t} \mathcal{L}(e^{-\delta t}f)$, we
have
\[ \mathcal{L}_\delta = \Delta_{h_L} + \partial_t^2 +
  (2n-2-2\delta)\partial_t + (\delta^2 - (2n-2)\delta), \]
and our main result is the following. 
\begin{prop}\label{prop:RLinvert}
  For $\delta$ avoiding a discrete set of indicial roots, and $\tau\in
  (2-m,0)$, the operator
  \[ \mathcal{L}_\delta : C^{k,\alpha}_\tau(\mathbf{R}\times L) \to
    C^{k-2,\alpha}_{\tau-2}(\mathbf{R}\times L) \]
  is invertible. Here we write $C^{k,\alpha}_{\tau} =
  C^{k,\alpha}_{0,\tau}$ for simplicity. 
\end{prop}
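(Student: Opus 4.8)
The plan is to prove invertibility of $\mathcal{L}_\delta$ on $\mathbf{R}\times L$ by combining the one-variable (Fourier/Laplace transform in the cylinder direction $t$) decomposition with the invertibility result for the cross-sectional operator $\Delta_{h_L} + a$ from Proposition~\ref{prop:Linvert}. Taking the Fourier transform in $t$, the equation $\mathcal{L}_\delta f = u$ becomes, for each frequency $\eta\in\mathbf{R}$,
\[
  \Delta_{h_L}\hat{f}(\eta,\cdot) + \Big(-\eta^2 + \sqrt{-1}(2n-2-2\delta)\eta + \delta^2-(2n-2)\delta\Big)\hat{f}(\eta,\cdot) = \hat{u}(\eta,\cdot),
\]
which is exactly $(\Delta_{h_L} + a(\eta))\hat{f} = \hat{u}$ with $a(\eta) = -\eta^2 + \sqrt{-1}(2n-2-2\delta)\eta + \delta^2 - (2n-2)\delta$. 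The indicial roots of $\mathcal{L}$ are precisely the values of $\delta$ for which $a(\eta)$ hits the real spectrum of $\Delta_{h_L}$ for some real $\eta$; for $\delta$ avoiding this discrete set, $\mathrm{Im}\,a(\eta)\ne 0$ whenever $\eta\ne 0$, and for $\eta=0$ we have $a(0)=\delta^2-(2n-2)\delta$, which we also ask to avoid the discrete real bad set of Proposition~\ref{prop:Linvert}. Hence for every $\eta$ the operator $\Delta_{h_L}+a(\eta)$ is invertible on $C^{k,\alpha}_\tau(L)\to C^{k-2,\alpha}_{\tau-2}(L)$ with $\tau\in(2-m,0)$.

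The core analytic work is to upgrade these fiberwise inverses into a bounded inverse on the weighted Hölder spaces on the full cylinder. I would do this in two steps. First, a standard parametrix/Neumann-series argument: cover $\mathbf{R}\times L$ by unit-length $t$-intervals, on each of which $\mathcal{L}_\delta$ differs from a constant-coefficient-in-$t$ operator only in lower order (there is no $t$-dependence at all here, which is even simpler), invert the model on each slab using the Fourier representation, and glue with a partition of unity in $t$ to get an approximate inverse $Q$ with $\|\mathcal{L}_\delta Q - I\| < 1$; then $Q(\mathcal{L}_\delta Q)^{-1}$ is a right inverse. Second, one needs the a priori Schauder-type estimate $\|f\|_{C^{k,\alpha}_\tau} \le C\|\mathcal{L}_\delta f\|_{C^{k-2,\alpha}_{\tau-2}}$, which rules out a kernel and (with the analogous estimate for the formal adjoint) gives surjectivity. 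For the a priori estimate the key point is a decay/no-slow-growth statement: if $\mathcal{L}_\delta f = u$ with $u$ in the weighted space and $\delta$ is a non-indicial weight, then $f$ cannot grow (or decay) at rate $e^{\delta t}$ in an uncontrolled way — this is the usual argument that between two consecutive indicial roots the solution operator is unique, proved by testing against the fiberwise inverse and using that $a(\eta)$ stays a uniform distance from $\mathrm{spec}(\Delta_{h_L})$ as $|\eta|\to\infty$ (so high frequencies are strongly damped) while low frequencies are handled by the invertibility at $\eta=0$.

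The main obstacle is handling the edge structure of $L$ uniformly in the frequency parameter $\eta$: Proposition~\ref{prop:Linvert} gives invertibility of $\Delta_{h_L}+a(\eta)$ for each fixed $a(\eta)$, but to assemble a bounded operator on the cylinder I need the operator norm of $(\Delta_{h_L}+a(\eta))^{-1}: C^{k-2,\alpha}_{\tau-2}\to C^{k,\alpha}_\tau$ to be controlled as $\eta$ ranges over $\mathbf{R}$ — in particular a bound like $O((1+|\eta|)^{-2})$ for the $C^0_{\tau-2}\to C^0_\tau$ norm and appropriate bounds on the higher-order pieces, reflecting that large $|\eta|$ makes the operator more coercive. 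This requires a semiclassical/large-parameter version of Mazzeo's edge estimates, rescaling $L$ by $|\eta|$ near and away from the singular stratum; away from the stratum it is the standard large-parameter elliptic estimate, while near the stratum one rescales into the model edge $\mathbf{C}\times V_0$ (cf. Proposition~\ref{prop:solve1} and Corollary~\ref{cor:ker0}, which already treat exactly that model with a spectral parameter $|\xi|^2$). Once these uniform-in-$\eta$ bounds are in place, taking the inverse Fourier transform and summing the contributions in the weighted norm is routine, and the proposition follows.
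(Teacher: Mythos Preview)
Your outline is viable in principle but diverges from the paper's argument and leaves the hardest step open. The paper also begins with the Fourier transform in $t$ and the fiberwise equation $(\Delta_{h_L}+a(\eta))\hat f=\hat u$, but it does \emph{not} try to assemble a global inverse from uniform-in-$\eta$ operator bounds. Instead, it only solves $\mathcal{L}_\delta f=\chi$ directly for right-hand sides whose Fourier transform has compact support (Proposition~\ref{prop:solve2}); for such $\chi$ only a compact range of frequencies enters and no large-$|\eta|$ uniformity is needed. The a~priori estimate $\Vert f\Vert_{C^{k,\alpha}_\tau}\le C\Vert\mathcal{L}_\delta f\Vert_{C^{k-2,\alpha}_{\tau-2}}$ (Proposition~\ref{prop:closedrange1}) is then obtained by a blowup/contradiction argument rather than by Fourier analysis: if a normalized sequence $f_i$ violates the bound, one rescales around near-extremal points and extracts a nonzero kernel element either on $\mathbf{R}\times L$ itself (contradicting Corollary~\ref{cor:ker02}) or, when $w(x_i)\to 0$, on the blowup model $\mathbf{R}^2\times V_0$ (contradicting Corollary~\ref{cor:ker0}). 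Surjectivity for arbitrary $u$ follows by approximating $u$ locally uniformly by $\chi_i$ with $\hat\chi_i$ compactly supported and uniformly bounded $C^{k-2,\alpha}_{\tau-2}$-norm, solving for each $\chi_i$, and using the a~priori estimate to extract a limit.

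The point is that the paper completely bypasses the ``main obstacle'' you flag. Your route needs not only uniform bounds on $(\Delta_{h_L}+a(\eta))^{-1}$ as $|\eta|\to\infty$---a semiclassical edge estimate---but also enough control on $\eta$-derivatives of the resolvent to run a Fourier-multiplier argument in H\"older (rather than $L^2$) spaces; neither is supplied, and together they amount to a nontrivial detour through semiclassical edge calculus. The paper trades all of this for soft arguments: solvability on a sufficiently rich class of data plus compactness. Incidentally, your parametrix/partition-of-unity step in $t$ is superfluous here since $\mathcal{L}_\delta$ is translation-invariant in $t$; there is nothing local to patch.
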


We first have the following result, analogous to
Proposition~\ref{prop:solve1}, except we do not need
the support to avoid $\{0\}\times L$.
\begin{prop}\label{prop:solve2}
  Suppose that $\delta$ avoids a discrete set of indicial roots. Let
  $\chi$ be smooth on $\mathbf{R}\times L$, such that $\hat{\chi}$ has
  compact support.  We can then find $f$ solving $\mathcal{L}_\delta f
  = \chi$, such that
  \begin{enumerate}
    \item $f\in C^{k,\alpha}_\tau$ for any $\tau < 0$, 
    \item For any $a > 0$ there is a constant $C$ such that $\Vert
      f(t, x)\Vert_{C^{k,\alpha}_\tau(|t| > A)}
      < C(1 + A)^{-a}$ for any $A > 0$. 
  \end{enumerate}
  The same applies to the operator
  \[ \mathcal{L}^*_\delta = \Delta_{h_L} + \partial_t^2 -
    (2n-2-2\delta)\partial_t + (\delta^2 -(2n-2)\delta), \]
  which is the adjoint of $\mathcal{L}_\delta$. 
\end{prop}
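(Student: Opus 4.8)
The plan is to mimic the proof of Proposition~\ref{prop:solve1}, taking the Fourier transform in the $\mathbf{R}$-direction (the $t$-variable) rather than in a $\mathbf{C}$-factor. Writing $\hat{\chi}(\xi, x) = \int_{\mathbf{R}} \chi(t,x) e^{-\sqrt{-1}\xi t}\, dt$, the equation $\mathcal{L}_\delta f = \chi$ becomes, for each fixed $\xi$,
\[ \Delta_{h_L}\hat{f}(\xi, \cdot) - \xi^2 \hat{f}(\xi,\cdot) + \sqrt{-1}(2n-2-2\delta)\xi\, \hat{f}(\xi,\cdot) + (\delta^2 - (2n-2)\delta)\hat{f}(\xi,\cdot) = \hat{\chi}(\xi,\cdot). \]
This is exactly an equation of the form $(\Delta_{h_L} + a(\xi))\hat{f}(\xi,\cdot) = \hat{\chi}(\xi,\cdot)$ on the link $L$, with $a(\xi) = -\xi^2 + \sqrt{-1}(2n-2-2\delta)\xi + \delta^2 - (2n-2)\delta$. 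First I would invoke Proposition~\ref{prop:Linvert} to invert $\Delta_{h_L} + a(\xi)$ on $C^{k,\alpha}_\tau(L)$ for $\tau\in(2-m,0)$: for $\xi\ne 0$ we have $\mathrm{Im}\,a(\xi) = (2n-2-2\delta)\xi$, which is nonzero provided $\delta\ne n-1$, so $\Delta_{h_L}+a(\xi)$ is invertible; for $\xi = 0$ we have $a(0) = \delta^2 - (2n-2)\delta$, and this is invertible precisely when $\delta$ avoids the discrete set of values making $a(0)$ a nonzero eigenvalue of $-\Delta_{h_L}$ (together with avoiding $a(0)=0$), which is what ``$\delta$ avoids a discrete set of indicial roots'' means. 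This gives a family of solutions $\hat{f}(\xi,\cdot)$ with uniform bounds $\Vert \hat{f}(\xi,\cdot)\Vert_{C^{k,\alpha}_\tau(L)} \leq C(\xi)\Vert\hat{\chi}(\xi,\cdot)\Vert_{C^{k-2,\alpha}_{\tau-2}(L)}$ on compact $\xi$-sets.

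The next step is to control the dependence of $\hat{f}$ on $\xi$, so that the inverse Fourier transform $f(t,x) = \frac{1}{2\pi}\int_{\mathbf{R}} \hat{f}(\xi,x) e^{\sqrt{-1}\xi t}\, d\xi$ makes sense and has the asserted decay. Since $\hat{\chi}$ has compact support in $\xi$, the operator norm of $(\Delta_{h_L}+a(\xi))^{-1}$ is uniformly bounded on this support (using that the indicial roots form a discrete set, so $a(\xi)$ stays away from the spectrum of $-\Delta_{h_L}$ uniformly on compact sets avoiding the bad $\xi$-values; note the only potentially bad value on a compact set is $\xi=0$, which we have excluded by the hypothesis on $\delta$). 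Then, just as in Proposition~\ref{prop:solve1}, I would differentiate the transformed equation with respect to $\xi$: each derivative $\partial_\xi^l \hat{f}$ satisfies $(\Delta_{h_L} + a(\xi))\partial_\xi^l\hat{f} = \partial_\xi^l\hat{\chi} + \sum_{j<l}\binom{l}{j}a^{(l-j)}(\xi)\partial_\xi^j\hat{f}$, and since $a(\xi)$ is a polynomial in $\xi$ with $a^{(l)}\equiv 0$ for $l\geq 3$, inductively all $\xi$-derivatives of $\hat{f}$ are bounded in $C^{k,\alpha}_\tau(L)$, with compact $\xi$-support (inherited from $\hat{\chi}$). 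Hence $\hat{f}$ is a smooth, compactly supported $C^{k,\alpha}_\tau(L)$-valued function of $\xi$, so its inverse Fourier transform $f$ is smooth in $t$, valued in $C^{k,\alpha}_\tau(L)$, and — by the standard integration-by-parts argument for Fourier transforms of smooth compactly supported functions — decays faster than any polynomial in $|t|$: for any $a>0$, $\Vert f(t,\cdot)\Vert_{C^{k,\alpha}_\tau(L)} \leq C_a(1+|t|)^{-a}$. This gives (1) for the $t$-slices and (2); the full $C^{k,\alpha}_\tau$ estimate on $\mathbf{R}\times L$, including $t$-derivatives, then follows from interior Schauder estimates for $\mathcal{L}_\delta$ applied on the conformally rescaled metric $w^{-2}\tilde{g}_{X_0} = w^{-2}(dt^2+h_L)$, using that $\chi$ is already controlled and that the rescaled geometry has bounded geometry at the relevant scales.

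For the statement about $\mathcal{L}_\delta^*$, I observe that $\mathcal{L}_\delta^*$ has the identical structure, with $2n-2-2\delta$ replaced by $-(2n-2-2\delta)$; equivalently $\mathcal{L}_\delta^* = \mathcal{L}_{2n-2-\delta}$ conjugated appropriately, or directly: the Fourier-transformed operator is $\Delta_{h_L} + a^*(\xi)$ with $a^*(\xi) = -\xi^2 - \sqrt{-1}(2n-2-2\delta)\xi + \delta^2-(2n-2)\delta$, whose imaginary part is again $\mp(2n-2-2\delta)\xi\ne 0$ for $\xi\ne0$, and whose value at $\xi=0$ is the same $\delta^2-(2n-2)\delta$. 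So the identical argument applies verbatim. The main obstacle — and the point requiring the most care — is the uniform invertibility and uniform bounds for $(\Delta_{h_L}+a(\xi))^{-1}$ near the excluded value $\xi=0$: one must check that the ``discrete set of indicial roots'' to be avoided is precisely the set making $a(0)$ coincide with $\{0\}\cup\mathrm{Spec}(-\Delta_{h_L})$, so that once $\delta$ avoids it, $a(\xi)$ stays a bounded distance from that spectrum for all $\xi$ in the compact support of $\hat{\chi}$ (for $\xi$ bounded away from $0$ this is automatic from $\mathrm{Im}\,a(\xi)\ne0$, and the only subtlety is a neighborhood of $\xi=0$, handled by continuity of $a$ and discreteness of the spectrum via Proposition~\ref{prop:Linvert}). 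Everything else is a routine adaptation of the Fourier-analytic argument already carried out in Proposition~\ref{prop:solve1}.
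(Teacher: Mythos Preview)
Your proposal is correct and follows essentially the same approach as the paper: Fourier transform in $t$, apply Proposition~\ref{prop:Linvert} at each frequency (using $\mathrm{Im}\,a(\xi)\ne 0$ for $\xi\ne 0$ when $\delta\ne n-1$, and the discrete indicial-root condition at $\xi=0$), then differentiate in $\xi$ to obtain smoothness of $\hat f$ and hence polynomial decay of $f$ in $t$, finishing with Schauder estimates for property~(1). Your explicit observation that $\hat f$ inherits compact $\xi$-support from $\hat\chi$ and your treatment of $\mathcal{L}_\delta^*$ are slightly more detailed than the paper's, but the argument is the same.
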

\begin{proof}
  The proof is similar to that of
  Proposition~\ref{prop:solve1}. Taking the Fourier transform in the
  $t$ variable we need
  to solve the equations
  \[ \label{eq:aaa5}
    \Delta_{h_L}\hat{f} - \big[\xi^2 \hat{f} - \sqrt{-1}\xi (2n-2-2\delta) -
    \delta^2 + (2n-2)\delta\big] \hat{f}  = \hat{\chi}. \]
  This equation is of the form $(\Delta_{h_L} + a)\hat{f} =
  \hat{\chi}$. Suppose that $\delta \ne n-1$.  If $\xi\ne 0$ then we have
  $\mathrm{Im}\,a \ne0$, while if $\xi = 0$, then $a = \delta^2
  -(2n-2)\delta$.  Therefore if $\delta$ is generic, we can apply 
  Proposition~\ref{prop:Linvert} to find $\hat{f}$ no matter what
  $\xi$ is. In addition, since $C^{k,\alpha}_\tau \subset C^{k,\alpha}_{\tau'}$ if $\tau >
  \tau'$, we have that $f\in C^0_{\tau'}$ for any $\tau' <
  0$. The Schauder estimates then imply that $f\in C^k_{\tau'}$ for
  any $k > 0$ as well, showing the property (1). To see (2) we claim
  that $\hat{f}$ is a smooth function of $\xi$. This can be seen by
  differentiating the equation as we did in
  Proposition~\ref{prop:solve1}. Inductively we find that each
  derivative $\partial^l_\xi \hat{f}$ satisfies an equation of the
  form
  \[ \Delta_{h_L} \partial^l_\xi \hat{f} + a(\xi) \partial^l_\xi
    \hat{f} = g(\xi), \]
  where $g(\xi)$ has compact support in $\xi$, and $g(\xi)\in
  C^{k,\alpha}_{\tau}$ for all $\tau < 0$. In addition $a(\xi)$ is
  such that Proposition~\ref{prop:Linvert} applies. In this way we can
  bound the $\xi$-derivatives of $\hat{f}$, and so the inverse Fourier
  transform $f$ will have the required decay. 
\end{proof}

This has the following corollary. 
\begin{cor}\label{cor:ker02}
  Suppose that $f\in C^{k,\alpha}_\tau$ satisfies $\mathcal{L}_\delta f
  = 0$ for some generic $\delta$, and $\tau\in (2-m, 0)$. Then $f=0$. 
\end{cor}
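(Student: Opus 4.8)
The plan is to mimic the proof of Corollary~\ref{cor:ker0} as closely as possible, with the Fourier transform now taken in the single variable $t$ rather than in the two--dimensional factor $\mathbf{C}$, and with Proposition~\ref{prop:solve2} playing the role of Proposition~\ref{prop:solve1}. First I would set up weighted $L^2$--spaces on the link $L$ exactly as in Corollary~\ref{cor:ker0}: fix $\tau_1$ with $2-m < \tau_1 < \tau$, choose a weight $\sigma$ on $L$ with $\sigma = w^{\tau_1}$ near the singular stratum, and define $L^2_\sigma(L)$ with norm $\Vert F\Vert_{L^2_\sigma}^2 = \int_L |F|^2\sigma^{-2}w^{-m}\,dV$, together with the dual weight $\sigma' = \sigma^{-1}w^{-2m}$. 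Since $|f|\lesssim w^\tau$ uniformly in $t$ and $\tau>\tau_1$, the function $f$ defines a bounded map $\mathbf{R}\to L^2_\sigma(L)$, hence an $L^2_\sigma(L)$--valued tempered distribution, and I would let $\hat f$ be its distributional Fourier transform in $t$. The same integration--by--parts estimate as in Corollary~\ref{cor:ker0} (now only two derivatives are needed, since $(1+|\xi|)^{-2}$ is integrable on $\mathbf{R}$) shows that $\hat f$ is a distribution of finite order.

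The key point is then to show $\hat f$ is supported at the origin. Given a test function $g:\mathbf{R}\to L^2_{\sigma'}(L)$ with compact support away from $0$, I would approximate $g$ in $C^4(\mathbf{R},L^2_{\sigma'})$ by functions that are smooth in $t$ with values in $C^{k,\alpha}(L)$ and still supported away from $0$, and reduce to that case. Applying Proposition~\ref{prop:solve2} to the \emph{adjoint} operator $\mathcal{L}^*_\delta$ with $\chi=\hat g$ (which is smooth and whose Fourier transform has compact support), I obtain $h$ with $\mathcal{L}^*_\delta h=\hat g$, $h\in C^{k,\alpha}_{\tau'}$ for every $\tau'<0$, and exponential decay as $|t|\to\infty$. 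Then
\[
 \hat f(g) = \int_{\mathbf{R}\times L} f\,\hat g = \int_{\mathbf{R}\times L} f\,\mathcal{L}^*_\delta h = \int_{\mathbf{R}\times L} (\mathcal{L}_\delta f)\,h = 0,
\]
the integration by parts being justified by the exponential decay of $h$ in $t$ together with the fact that, near the singular stratum, $|f|\lesssim w^\tau$ with $\tau\in(2-m,0)$ while the decay rate $\tau'$ of $h$ can be taken arbitrarily close to $0$, so that all boundary contributions on $\{w=\epsilon\}$ are $O(\epsilon^{\tau+\tau'+m-2})\to 0$. Hence $\hat f$ is supported at the origin, and being of finite order it is a finite linear combination $\sum_j c_j\,\delta_0^{(j)}$ with $c_j\in L^2_\sigma(L)$ (the scalar classification applies verbatim in the vector--valued setting). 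Therefore $f(t,\cdot)=\sum_j t^j c_j$ is polynomial in $t$, and since $f$ is bounded in $t$ we get $c_j=0$ for $j\geq 1$, so $f$ is independent of $t$.

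Finally, with $f$ independent of $t$ the equation $\mathcal{L}_\delta f=0$ becomes $(\Delta_{h_L}+\delta^2-(2n-2)\delta)f=0$ on $L$, with $f\in C^{k,\alpha}_\tau(L)$ and $\tau\in(2-m,0)$. Since $\delta$ is generic we may also assume the real number $\delta^2-(2n-2)\delta$ avoids the discrete bad set of Proposition~\ref{prop:Linvert} — its preimage under $\delta\mapsto\delta^2-(2n-2)\delta$ is discrete and is added to the set of excluded $\delta$ — so $\Delta_{h_L}+\delta^2-(2n-2)\delta$ is invertible on these weighted spaces and $f=0$. The only genuinely new feature compared with Corollary~\ref{cor:ker0} is that $\mathcal{L}_\delta$ is not self--adjoint, which is precisely why the solvability statement of Proposition~\ref{prop:solve2} was recorded for $\mathcal{L}^*_\delta$ as well; the most delicate bookkeeping is the vanishing of the boundary terms at the singular stratum in the integration by parts, which is exactly where the hypothesis $\tau\in(2-m,0)$ is used.
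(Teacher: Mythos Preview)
Your argument is correct, but it takes a more roundabout path than the paper's. The key point you may have missed is that Proposition~\ref{prop:solve2}, unlike Proposition~\ref{prop:solve1}, does \emph{not} require $\hat\chi$ to be supported away from $\{0\}$: because $\Delta_{h_L}+a$ is invertible even at $\xi=0$ for generic $\delta$, one can solve $\mathcal{L}^*_\delta h=\chi$ with the required decay for any $\chi$ with compactly supported Fourier transform. The paper exploits this directly: assuming $f\neq 0$, choose any $\chi$ with $\hat\chi$ of compact support and $\int f\chi\neq 0$, solve $\mathcal{L}^*_\delta h=\chi$ with $h\in C^{k,\alpha}_{\tau'}$ for $\tau'\in(2-m-\tau,0)$, and integrate by parts to reach a contradiction in one step. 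No Fourier support analysis, no reduction to $t$-independent $f$, and no separate appeal to Proposition~\ref{prop:Linvert} are needed.

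Your route---show $\hat f$ is supported at the origin, deduce $f$ is polynomial and hence constant in $t$, then invoke Proposition~\ref{prop:Linvert}---mirrors Corollary~\ref{cor:ker0} faithfully and works, with the integration-by-parts justification and the choice of $\tau'$ exactly as you describe (the condition $\tau+\tau'+m-2>0$ is precisely $\tau'>2-m-\tau$, matching the paper). Two small remarks: the decay in Proposition~\ref{prop:solve2} is polynomial of all orders rather than exponential, though this makes no difference; and your approach has the minor virtue of making explicit why the genericity of $\delta$ enters, via the invertibility at $\xi=0$.
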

\begin{proof}
  Suppose that $f$ is nonzero.
  We can then find $\chi$ with $\hat{\chi}$
  having compact support such that $\int f\chi \ne 0$. We apply
  Proposition~\ref{prop:solve2} to solve $\mathcal{L}^*_\delta h =
  \chi$, with $h\in C^{k,\alpha}_{\tau'}$, where $\tau'$ is chosen
  so that $2-m-\tau < \tau' < 0$. We then have
  \[ \int_{\mathbf{R}\times L} f\chi \,dV &= \int_{\mathbf{R}\times L}
    f\mathcal{L}_\delta^*h \,dV \\
    &= \int_{\mathbf{R} \times L}
    (\mathcal{L}_\delta f)h\, dV = 0, \]
  where the integration by parts is justified by the decay properties
  of $h$ and the choice of $\tau'$. This contradicts our choice of $\chi$. 
\end{proof}

We next use a standard blowup argument to obtain the following. 
\begin{prop} \label{prop:closedrange1}
  Let $\tau\in (2-m, 0)$, and $\delta$ generic. There exists a constant $C$ such that 
  for any $f\in C^{k,\alpha}_\tau(\mathbf{R}\times L)$ we have
  \[ \Vert f\Vert_{C^{k,\alpha}_{\tau}} \leq C \Vert
    \mathcal{L}_\delta f\Vert_{C^{k-2,\alpha}_{\tau-2}}. \]
  In particular $\mathcal{L}_\delta$ has closed range. 
\end{prop}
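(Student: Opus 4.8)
The plan is to prove the estimate by a standard contradiction/blowup argument, using the solvability results of Proposition~\ref{prop:solve2} together with the triviality of the kernel from Corollary~\ref{cor:ker02}.

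Suppose the estimate fails. Then there is a sequence $f_j \in C^{k,\alpha}_\tau(\mathbf{R}\times L)$ with $\Vert f_j\Vert_{C^{k,\alpha}_\tau} = 1$ but $\Vert \mathcal{L}_\delta f_j\Vert_{C^{k-2,\alpha}_{\tau-2}} \to 0$. Since the norm of $f_j$ is achieved (up to a fixed factor) in some region, there is a sequence of points $p_j = (t_j, x_j) \in \mathbf{R}\times L$ where a definite fraction of the weighted $C^{k,\alpha}$ norm of $f_j$ concentrates. One then translates in the $t$-variable by $-t_j$ (which commutes with $\mathcal{L}_\delta$ since the coefficients are $t$-independent) and passes to a subsequential limit. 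Because $\tau$ lies in the good range $(2-m,0)$ and $w^{-2}h_L$ gives $\mathbf{R}\times L$ bounded geometry away from the singular stratum — with the edge structure near the stratum controlled as in Mazzeo~\cite{Maz91} — the rescaled/translated functions converge in $C^{k,\alpha}_{loc}$ (in the appropriate weighted sense) to a limit $f_\infty$. There are two cases for the location of the concentration points: either $w(x_j)$ stays bounded below, so the limit space is again $\mathbf{R}\times L$ (or its edge model near the stratum, i.e. $\mathbf{R}\times$(model edge neighborhood)); or $w(x_j)\to 0$, in which case after also rescaling by $w(x_j)$ the limit model is $\mathbf{R}\times\mathbf{R}^2\times C(\text{link of }V_0)$, on which the relevant weight is purely the $V_0$-cone weight. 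In the first case the limit satisfies $\mathcal{L}_\delta f_\infty = 0$ with $f_\infty \in C^{k,\alpha}_\tau$ nonzero, contradicting Corollary~\ref{cor:ker02}; in the second case one gets a nonzero bounded harmonic function (or solution of the corresponding translation-invariant operator) on a product cone with growth rate in $(2-m,0)$, contradicting the analogue of Corollary~\ref{cor:ker0}.

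Making the concentration-of-norm and limit-extraction steps precise requires a little care: one has to choose the points $p_j$ so that, after weighting, a fixed proportion of both the $C^k$-part and the $C^{0,\alpha}$-seminorm part of the norm survives in the limit, and one has to rule out the possibility that the norm "escapes to infinity" in $t$ or escapes into the singular stratum with no surviving limit — this is handled by the weighted elliptic (Schauder) estimates on bounded-geometry balls together with the Mazzeo edge estimates near the stratum, which give uniform interior control. The limit $f_\infty$ is then genuinely nonzero, lies in $C^{k,\alpha}_\tau$ of the appropriate limit space, and is annihilated by the limit operator, which is either $\mathcal{L}_\delta$ itself or a translation-invariant operator on a simpler product cone; either way the vanishing theorems already established apply.

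\textbf{The main obstacle} I expect is the blowup analysis near the singular stratum: one must verify that the edge structure of $(L,h_L)$ persists under the translation/rescaling limit, so that the limiting operator is again of the same type and the weighted Schauder estimates pass to the limit uniformly. This is precisely where the Fredholm theory for edge operators of Mazzeo~\cite{Maz91} is used — it provides the uniform a priori estimates on model edge neighborhoods — and once that input is in hand the remainder of the argument is the routine blowup contradiction. Closed range of $\mathcal{L}_\delta$ is then immediate from the estimate by the standard functional-analytic argument.
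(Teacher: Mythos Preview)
Your approach is essentially the same as the paper's: a contradiction/blowup argument with translation in $t$, splitting into the two cases $w(x_j)$ bounded below versus $w(x_j)\to 0$, and invoking Corollaries~\ref{cor:ker02} and~\ref{cor:ker0} respectively. Two points are worth correcting.

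First, the paper simplifies the blowup by using the Schauder estimate
\[ \Vert f\Vert_{C^{k,\alpha}_\tau} \leq C(\Vert \mathcal{L}_\delta f\Vert_{C^{k-2,\alpha}_{\tau-2}} + \Vert f\Vert_{C^0_\tau}) \]
(valid since $w^{-2}\tilde g_{X_0}$ has bounded geometry) to reduce to proving $\Vert f\Vert_{C^0_\tau} \leq C\Vert \mathcal{L}_\delta f\Vert_{C^{k-2,\alpha}_{\tau-2}}$. One then only needs a point where $|f_j(t_j,x_j)| > \tfrac12 w(x_j)^\tau$, avoiding your worry about whether the H\"older seminorm part of the norm survives in the limit. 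This makes the argument cleaner and removes any need to invoke Mazzeo's edge estimates at this step.

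Second, your limit space in the case $w(x_j)\to 0$ is off by one factor of $\mathbf{R}$. Near its singular circle, $(L,h_L)$ is modeled on $S^1\times V_0$, so $\mathbf{R}\times L$ is modeled on $\mathbf{R}\times S^1\times V_0$. Rescaling by $w(x_j)^{-2}$ unrolls the $S^1$ and stretches the $\mathbf{R}$, giving the pointed limit $\mathbf{R}^2\times V_0 = \mathbf{C}\times V_0$, not $\mathbf{R}\times\mathbf{R}^2\times V_0$ (which has the wrong dimension). Under this rescaling the lower-order terms in $\mathcal{L}_\delta$ drop out and the limiting equation is precisely $\Delta f_\infty = 0$ on $\mathbf{C}\times V_0$, so Corollary~\ref{cor:ker0} applies directly rather than an ``analogue''. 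Finally, note that Proposition~\ref{prop:solve2} is not actually used in this argument.
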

\begin{proof}
  Note first that since the metric $w^{-2}\tilde{g}_{X_0}$ has bounded
  geometry, we can use the Schauder estimates to obtain 
  \[ \label{eq:Schauder1} \Vert f\Vert_{C^{k,\alpha}_\tau} \leq C(
    \Vert \mathcal{L}_\delta f
    \Vert_{C^{k-2,\alpha}_{\tau-2}} + \Vert f\Vert_{C^0_\tau}). \]
  If follows that it is enough to prove that 
  \[ \Vert f\Vert_{C^0_\tau} \leq C \Vert \mathcal{L}_\delta f
    \Vert_{C^{k-2,\alpha}_{\tau-2}} \]
  for a uniform constant $C$.

  We argue by contradiction. Suppose that we have a sequence of
  functions $f_i \in C^{k,\alpha}_\tau$ with $\Vert
  f_i\Vert_{C^0_\tau} = 1$, but $\Vert \mathcal{L}_\delta f_i
  \Vert_{C^{k-2,\alpha}_{\tau-2}} < 1/i$. We can find points $(t_i,
  x_i)\in \mathbf{R}\times L$ such that
  \[ |f_i(t_i, x_i)| > \frac{1}{2} w(x_i)^\tau. \]
  By translating in the $t$ direction we can assume that $t_i=0$ for
  each $i$. 

  There are two possibilities. If $w(x_i)$ is bounded away from zero,
  then by choosing a subsequence we can assume that $x_i \to x$ for
  some $x \in L$. The Schauder estimate \eqref{eq:Schauder1} implies
  that choosing a further subsequence we can assume that the $f_i$
  converge locally in $C^{k,\alpha'}$ to a limit $f\in C^{k,\alpha}_\tau$, which then must satisfy
  $\mathcal{L}_\delta f =0$. Corollary~\ref{cor:ker02} implies that
  $f=0$, which contradicts that $|f(x)| \geq \frac{1}{2}w(x)^\tau$. 

  The other possibility is that $w(x_i)\to 0$. Consider the rescaled
  metrics $w(x_i)^{-2} \tilde{g}_{X_0}$. If we take the pointed limit of $\mathbf{R}\times L$
  with these rescaled metrics, based at the points $(0,x_i)$, then (up
  to choosing a subsequence) we obtain the limit space $\mathbf{R}^2
  \times V_0$, with the product metric $\omega_{Euc} +
  \omega_{V_0}$, with basepoint $(0, x)$ for some $x\in V_0$ at
  distance 1 from the vertex. This is just the statement that $h_L$ is modeled on
  $S^1\times V_0$ near the circle of singularities. 

  Under taking this pointed limit the rescaled functions
  $w(x_i)^{-\tau} f_k$  converge locally in $C^{k,\alpha'}$ 
  to a limit $f \in C^{k,\alpha}_\tau(\mathbf{R}^2 \times V_0)$,
  satisfying $\Delta f =0$. Corollary~\ref{cor:ker0} implies that
  $f=0$, contradicting $|f(0,x)| \geq 1/2$. 
\end{proof}

We can finally prove Proposition~\ref{prop:RLinvert}. 
\begin{proof}[Proof of Proposition~\ref{prop:RLinvert}]
  It is enough to show that
  \[ \mathcal{L}_\delta : C^{k,\alpha}_\tau(\mathbf{R}\times L) \to
    C^{k-2,\alpha}_{\tau-2}(\mathbf{R}\times L) \]
  is surjective, since Corollary~\ref{cor:ker02} implies that it is
  injective. While we already know that the image is closed, and
  moreover Proposition~\ref{prop:solve2} provides us with many
  elements in the image, these functions do not form a dense set in
  $C^{k-2,\alpha}_{\tau-2}$, so we cannot immediately conclude. 

  Instead let $u \in C^{k-2,\alpha}_{\tau-2}(\mathbf{R}\times L)$. We can find a sequence
  of smooth $\chi_i$ such that $\hat{\chi}_i$ has compact support,
  $\chi_i \to u$ locally uniformly  and
  moreover
  \[ \Vert \chi_i\Vert_{C^{k-2,\alpha}_{\tau-2}} < C \] 
  for a constant $C$ depending on $u$. By
  Proposition~\ref{prop:solve2} we can find $f_i \in
  C^{k,\alpha}_\tau(\mathbf{R}\times L)$ such that $\mathcal{L}_\delta f_i = \chi_i$, and
  by Proposition~\ref{prop:closedrange1} we have 
  \[ \Vert f_i \Vert_{C^{k,\alpha}_\tau} < C, \]
  for $C$ independent of $i$. 
  Up to choosing a subsequence we can take a limit $f_i \to f$, with
  the convergence holding locally in $C^{k,\alpha'}$, and such that
  $\Vert f\Vert_{C^{k,\alpha}_\tau} \leq C$. The limit satisfies
  $\mathcal{L}_\delta f = u$, and so $\mathcal{L}_\delta$ is
  surjective. 
\end{proof}

\subsection{The model space $\mathbf{C}\times V_1$}
Let us now move on to the Laplacian on $\mathbf{C}\times V_1$. Here
the relevant weighted spaces are defined in terms of the weight
function $\zeta$ on $V_1$, which is a smoothed out version of $\max\{1,
d(\cdot, o)\}$ for a point $o\in V_1$. As above, the weighted
H\"older spaces can be defined in terms of a conformally scaled
version of the product metric $g = g_{Euc} + g_{V_1}$:
\[ \Vert f\Vert_{C^{k,\alpha}_\tau} = \Vert \zeta^{-\tau}
  f\Vert_{C^{k,\alpha}_{\zeta^{-2} g}}. \]
The weighted norms on $V_1$ are defined analogously. 

We first have the following result, analogous to
Proposition~\ref{prop:Linvert}.
\begin{prop}\label{prop:V1invert}
  Let $\tau\in (2-m,0)$, $\lambda \geq 0$, 
  and let $u$ be smooth with compact support on
  $V_1$. We can find a smooth function $f$ on $V_1$ such that
  \[ \label{eq:d1} \Delta_{V_1} f - \lambda f = u, \]
  and in addition we have an estimate $|f| \leq C \zeta^\tau$ for a
  constant that depends on $\Vert u\Vert_{C^{0}_{\tau-2}}$ and $\tau$, but not on
  $\lambda$. Recall that here $m=2n-2$ is the dimension of $V_1$. 
\end{prop}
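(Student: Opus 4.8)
The statement asks us to solve $\Delta_{V_1} f - \lambda f = u$ on the asymptotically conical manifold $V_1$ (which is asymptotic to the cone $V_0$), with a weighted sup-bound on $f$ that is uniform in $\lambda \geq 0$. The natural strategy is to prove a uniform a priori estimate $|f| \le C\zeta^\tau$ (with $C = C(\|u\|_{C^0_{\tau-2}}, \tau)$ independent of $\lambda$) and then obtain existence by a standard approximation/limiting argument. Since $V_1$ is a complete K\"ahler manifold asymptotic to the Calabi--Yau cone $V_0$, the operator $\Delta_{V_1} - \lambda$ fits into the Lockhart--McOwen~\cite{LM85} theory: for $\tau \in (2-m, 0)$ (the ``good'' range of weights avoiding indicial roots of the cone Laplacian, since $0$ and $2-m$ are the indicial roots associated to the constant and the Green's-function behavior) the operator $\Delta_{V_1} : C^{k,\alpha}_\tau(V_1) \to C^{k-2,\alpha}_{\tau-2}(V_1)$ is an isomorphism. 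Adding $-\lambda f$ with $\lambda \ge 0$ only helps at infinity and cannot create a kernel (by the maximum principle / integration by parts, any $f$ decaying at infinity with $\Delta f = \lambda f$ is zero). So existence of a solution $f \in C^{k,\alpha}_\tau$ for each fixed $\lambda$ is not the issue; the content is the $\lambda$-uniformity of the bound.

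\textbf{Key steps.} First I would establish the uniform estimate by a contradiction/blowup argument analogous to the proof of Proposition~\ref{prop:closedrange1}. Suppose there are $\lambda_i \ge 0$, $u_i$ with $\|u_i\|_{C^0_{\tau-2}} \to 0$ (after normalizing) but solutions $f_i$ with $\|f_i\|_{C^0_\tau} = 1$, achieved (nearly) at points $p_i \in V_1$ with $|f_i(p_i)| > \tfrac12 \zeta(p_i)^\tau$. There are two regimes depending on whether $\zeta(p_i)$ stays bounded or $\zeta(p_i) \to \infty$, and, within the latter, whether $\lambda_i \zeta(p_i)^2$ stays bounded or tends to infinity. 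If $\zeta(p_i)$ is bounded, local elliptic (Schauder) estimates give a locally convergent subsequence whose limit $f$ satisfies $\Delta_{V_1} f - \lambda_\infty f = 0$ with $|f| \le \zeta^\tau$, $f \not\equiv 0$; but for $\tau < 0$ this $f$ decays, and integration by parts (valid by the decay, since $2\tau < 2-m$ ensures $\int |f|^2 < \infty$ for $m \ge 3$ — or one argues directly via the maximum principle) forces $\lambda_\infty \ge 0$ to be an eigenvalue with an $L^2$ eigenfunction decaying like $\zeta^\tau$; since $V_1$ is complete noncompact with one end this contradicts either $\lambda_\infty = 0$ (Corollary-type vanishing) or, for $\lambda_\infty > 0$, the fact that $\Delta_{V_1}$ on a complete manifold with Euclidean-type volume growth has no $L^2$ eigenfunctions (or more simply, such an $f$ would have to decay faster than any polynomial and then vanish by unique continuation together with the structure of the cone). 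If instead $\zeta(p_i) \to \infty$, rescale the metric by $\zeta(p_i)^{-2}$ and the function by $\zeta(p_i)^{-\tau}$; the rescaled operators become $\Delta - \zeta(p_i)^2 \lambda_i$ on larger and larger regions converging to the cone $V_0 \setminus \{0\}$ (or, if $p_i$ is itself running off to a region where $V_1$ looks Euclidean, to $\mathbf{R}^m$). If $\zeta(p_i)^2 \lambda_i$ stays bounded, the limit satisfies $\Delta_{V_0} f - c f = 0$ for some $c \ge 0$ on the cone with $|f| \le r^\tau$, which has no nonzero solution in this weight range (standard separation of variables on the cone, since $\tau$ avoids the indicial roots; for $c > 0$ one gets modified-Bessel behavior that is incompatible with polynomial decay unless $f \equiv 0$). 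If $\zeta(p_i)^2 \lambda_i \to \infty$, rescale instead by $\lambda_i^{-1/2}$: then $\lambda_i^{-1}(\Delta - \lambda_i) \to \Delta - 1$ on $\mathbf{R}^m$ or on a half-space/cone at unit scale, and $|f(0)| \ge \tfrac12$ while $\Delta f - f = 0$ with $|f|$ bounded; by Liouville for $\Delta - 1$ on these model spaces, $f \equiv 0$, again a contradiction. In all cases the blowup limit vanishes, contradicting $|f(p_\infty)| \ge \tfrac12$, which proves the uniform estimate.

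\textbf{Existence and the main obstacle.} With the uniform estimate in hand, existence is routine: for fixed $\lambda$, invert $\Delta_{V_1} - \lambda$ on a large compact exhaustion $\{\zeta < R_j\}$ with Dirichlet boundary conditions (each such problem is uniquely solvable since $\Delta - \lambda$ has no nonnegative Dirichlet eigenvalue $\le 0$... wait — $-\Delta$ has positive Dirichlet eigenvalues, so $\Delta - \lambda$ is invertible for $\lambda \ge 0$ precisely when $-\lambda$ is not a Dirichlet eigenvalue of $-\Delta$, i.e. always for $\lambda \ge 0$), obtain solutions $f_j$ with $|f_j| \le C\zeta^\tau$ by applying the barrier $C\zeta^\tau$ (using $\Delta(\zeta^\tau) \le \tau(\tau + m - 2)\zeta^{\tau-2}(1 + o(1)) < 0$ in the range $\tau \in (2-m,0)$, so $C\zeta^\tau$ is a supersolution for $u$ supported where $\zeta \le C_0$) and the maximum principle, then extract a locally convergent subsequence $f_j \to f$ by Schauder, with $f$ solving the equation on all of $V_1$ and satisfying $|f| \le C\zeta^\tau$; smoothness of $f$ follows from elliptic regularity since $u$ is smooth. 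The main obstacle is the $\lambda$-uniformity, specifically handling the intermediate and large-$\lambda$ blowup limits cleanly: one must be careful that the rescaling that makes the geometry converge (scaling by $\zeta(p_i)^{-1}$) may not be the rescaling that tames the zeroth-order term $\lambda_i$ (scaling by $\lambda_i^{-1/2}$), so the argument genuinely splits into the three regimes above, and in the $\mathbf{R}^m$ or cone limits with the $-1$ zeroth-order term one invokes a Liouville theorem for $\Delta - 1$ (positivity of $\Delta - 1$, i.e. no bounded harmonic-type solutions) rather than for $\Delta$ itself. The other delicate point is that $V_1$ has only polynomial (not exponential) decay built in, so in the bounded-$\zeta(p_i)$ case with $\lambda_\infty > 0$ one cannot simply quote a spectral-gap statement; instead one uses that an $L^2$ solution of $(\Delta_{V_1} - \lambda_\infty)f = 0$ on a complete manifold must have $f$ decaying like $\zeta^{2-m-\epsilon}$ hence $\int|\nabla f|^2 = -\lambda_\infty \int |f|^2 \le 0$, forcing $f$ constant and then zero.
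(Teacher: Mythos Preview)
Your blowup/contradiction strategy is a workable route, but it differs substantially from the paper's proof, which is shorter and avoids the case analysis entirely. The paper first handles existence for each fixed $\lambda$ separately (Lockhart--McOwen for $\lambda=0$; for $\lambda>0$, essential self-adjointness of $\Delta_{V_1}-\lambda$ on $L^2$ with trivial kernel, plus Schauder to get super-polynomial decay). For the $\lambda$-uniform bound, the paper uses a single barrier: let $b\in C^{k,\alpha}_\tau$ solve $\Delta_{V_1} b=-\zeta^{\tau-2}$ (this exists by the $\lambda=0$ theory, and $b>0$ by the maximum principle). If $f-Cb$ had a positive interior maximum at $x_{\max}$ (which it must, by the fast decay of $f$ when $\lambda>0$), then $0\ge \Delta(f-Cb)(x_{\max})=u(x_{\max})+\lambda f(x_{\max})+C\zeta^{\tau-2}(x_{\max})$; but $\lambda f(x_{\max})\ge 0$, so this forces $|u(x_{\max})|\ge C\zeta^{\tau-2}(x_{\max})$, impossible once $C>\|u\|_{C^0_{\tau-2}}$. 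The whole $\lambda$-dependence disappears in one line because the zeroth-order term has the good sign at a maximum.

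Your approach can be made to work, but a couple of points need tightening. First, your regime split only subdivides the case $\zeta(p_i)\to\infty$ according to $\lambda_i\zeta(p_i)^2$; you must also allow $\lambda_i\to\infty$ in the bounded-$\zeta(p_i)$ case (there the $\lambda_i^{-1/2}$-rescaling gives a bounded solution of $(\Delta-1)f=0$ on $\mathbf{R}^m$, hence $f=0$). Second, the various Liouville statements you invoke on the cone with a positive mass term (``modified-Bessel behavior incompatible with polynomial decay'') are true but deserve at least a sentence of justification via separation of variables. Finally, your compact-exhaustion existence argument is fine in spirit, but note that $\zeta^\tau$ is only an \emph{asymptotic} supersolution (since $\Delta(\zeta^\tau)\sim \tau(\tau+m-2)\zeta^{\tau-2}$ only at infinity), so it needs a compactly supported correction; this is exactly what the paper's barrier $b$ provides in a cleaner way. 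The trade-off: the paper's barrier trick is shorter and uses only the maximum principle, while your blowup method is more robust in situations where a good barrier is not readily available.
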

\begin{proof}
  When $\lambda=0$, 
  this follows from the standard theory for the Laplacian acting in
  weighted spaces on the asymptotically conical manifold $V_1$ (see
  e.g. Lockhart-McOwen~\cite{LM85}):
  \[ \Delta_{V_1} : C^{k,\alpha}_{\tau}(V_1) \to
    C^{k-2,\alpha}_{\tau-2}(V_1). \]
  For our choice
  of weight the Laplacian is self-adjoint, and moreover any decaying element in
  $f \in \ker \Delta_{V_1}$ decays at the rate of at least $d(\cdot,
  o)^{2-m}$. It follows by integration by parts that $f$ is constant,
  but since it decays, we have in fact $f=0$. Hence $\Delta_{V_1}$ is
  invertible. 

  When $\lambda > 0$, then we can also solve Equation~\eqref{eq:d1}
  using that $\Delta_{V_1} - \lambda$ is an essentially self-adjoint
  operator on $L^2$, whose kernel is trivial. Moreover using the
  Schauder estimates it follows that the solution $f$ decays faster
  than any inverse power of $\zeta$. What remains is to obtain a uniform
  estimate for this decay, independent of $\lambda$ (in particular as
  $\lambda \to 0$). 

  For this, we first let $b\in C^{k,\alpha}_\tau$ be a solution of
  \[ \Delta_{V_1} b = - \zeta^{\tau-2}. \]
  By the maximum principle we have that $b > 0$. We claim that for
  sufficiently large $C$, depending on $\Vert u\Vert_{C^0_{\tau-2}}$,
  we have an estimate $|f| < Cb$, independent
  of $\lambda$. Let us show that $f < Cb$: if this estimate were to fail, then the
  function $f - Cb$ would achieve a maximum at some point $x_{max}$, using the fast
  decay of $f$. In particular $f(x_{max}) > 0$, and at the same time
  by the maximum principle
  \[ 0 &\geq \Delta(f - Cb)(x_{max}) \\
    &= u(x_{max}) + \lambda f(x_{max})
    + C\zeta^{\tau -2}(x_{max}) \\
    &> u(x_{max}) + C \zeta^{\tau-2}(x_{max}). \]
   If $C$ is chosen large depending on $\Vert u\Vert_{C^0_{\tau-2}}$,
   this is a contradiction. In a similar way one can prove that $f >
   -Cb$ for the same $C$. 
\end{proof}

The next result is analogous to Proposition~\ref{prop:solve2}.
\begin{prop}\label{prop:solve3}
  Suppose that $\chi$ is a smooth function on $\mathbf{C}\times V_1$
  such that $\hat{\chi}$ has compact support.  We can then solve the
  equation $\Delta f = \chi$ such that
  \begin{enumerate}
    \item $f\in C^{k,\alpha}_\tau$ for any $\tau > 2-m$, 
    \item If $\hat\chi$ is supported  away from $\{0\}\times
  V_1$, then in addition for any $a > 0$ there is a constant $C$ such that $\Vert
      f\Vert_{C^{k,\alpha}_\tau(|z| > A)}
      < C(1 + A)^{-a}$ for any $A > 0$. 
  \end{enumerate}
\end{prop}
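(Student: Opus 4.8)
The plan is to adapt the Fourier-transform argument used for Propositions~\ref{prop:solve1} and \ref{prop:solve2}, now inverting the operator in each Fourier mode by means of Proposition~\ref{prop:V1invert}. Since $\hat\chi$ has compact support, $\chi$ is supported in $\mathbf{C}\times K$ for some compact $K\subset V_1$, and $\hat\chi$ is supported in $K_0\times K$ for a compact $K_0\subset\mathbf{C}$. Taking the Fourier transform in the $\mathbf{C}$-direction, the equation $\Delta f=\chi$ becomes, for each $\xi\in\mathbf{C}$,
\[ \Delta_{V_1}\hat f(\xi,\cdot) - |\xi|^2\,\hat f(\xi,\cdot) = \hat\chi(\xi,\cdot). \]
For each fixed $\xi$ I would apply Proposition~\ref{prop:V1invert} with $\lambda=|\xi|^2$ to the compactly supported function $\hat\chi(\xi,\cdot)$ on $V_1$, obtaining a solution $\hat f(\xi,\cdot)$ together with the bound $\Vert\hat f(\xi,\cdot)\Vert_{C^0_\tau(V_1)}\le C$ for $\tau\in(2-m,0)$, where the constant depends only on $\sup_{\xi\in K_0}\Vert\hat\chi(\xi,\cdot)\Vert_{C^0_{\tau-2}}$ and on $\tau$, and in particular is uniform in $\xi$. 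Outside $K_0$ I set $\hat f(\xi,\cdot)=0$: this is forced, since $\Delta_{V_1}-|\xi|^2$ has trivial kernel on $C^{k,\alpha}_\tau(V_1)$ in this range of weights (for $\lambda>0$ by essential self-adjointness together with the maximum principle, and for $\lambda=0$ by the integration-by-parts argument recalled in the proof of Proposition~\ref{prop:V1invert}), so the solution is unique and vanishes wherever $\hat\chi$ does.

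Then I would take $f$ to be the inverse Fourier transform of $\hat f$; the integral runs over the compact set $K_0$ with a uniformly $C^0_\tau(V_1)$-bounded integrand, so $\Vert f(z,\cdot)\Vert_{C^0_\tau}$ is bounded independently of $z\in\mathbf{C}$, and the Schauder estimates (as in the proofs of Propositions~\ref{prop:solve1} and \ref{prop:solve2}) upgrade this to $f\in C^{k,\alpha}_\tau(\mathbf{C}\times V_1)$. Since $\tau\in(2-m,0)$ is arbitrary and $\zeta\ge1$, so that $C^{k,\alpha}_\tau\subset C^{k,\alpha}_{\tau'}$ for $\tau'\ge\tau$, this yields $f\in C^{k,\alpha}_\tau$ for every $\tau>2-m$, which is item~(1). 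That $f$ really solves $\Delta f=\chi$ follows on taking the Fourier transform back, which is legitimate because $\hat f$ is compactly supported in $\xi$ (it vanishes off $K_0$) and $\chi$ is smooth.

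For item~(2), when $\hat\chi$ is supported away from $\{0\}\times V_1$ one has $|\xi|^2\ge c>0$ on its support, so each $\hat f(\xi,\cdot)$ solves a massive equation $(\Delta_{V_1}-\lambda)\hat f=\hat\chi$ with $\lambda$ bounded below, and by the Schauder argument inside the proof of Proposition~\ref{prop:V1invert} it decays faster than any inverse power of $\zeta$ on $V_1$. Differentiating the equation in $\xi$ repeatedly, $\partial^l_\xi\hat f$ satisfies an equation of the same type whose right-hand side is $\partial^l_\xi\hat\chi$ plus a sum of terms of the form (bounded function of $\xi$)$\,\cdot\,\partial^i_\xi\hat f$ with $|i|<|l|$; inductively all of these decay faster than any inverse power of $\zeta$, with bounds uniform in $\xi\in K_0$. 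Hence $\xi\mapsto\hat f(\xi,\cdot)$ is a smooth, compactly supported, $C^0_\tau(V_1)$-valued function, and integrating by parts $N$ times in $\xi$ in the inverse Fourier transform gives $\Vert f(z,\cdot)\Vert_{C^0_\tau}\le C_N(1+|z|)^{-N}$; a further application of the Schauder estimates promotes this to the asserted $C^{k,\alpha}_\tau$ decay on the regions $\{|z|>A\}$.

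The Schauder bookkeeping and the Fourier manipulations are routine. The step that requires genuine care — and the reason Proposition~\ref{prop:V1invert} was stated with a $\lambda$-independent constant — is the uniformity of the mode-by-mode estimate as $\xi\to0$ in item~(1): the solution $\hat f(\xi,\cdot)$ changes character between $\lambda>0$ (rapid decay on $V_1$) and $\lambda=0$ (decay only at the rate $\zeta^\tau$), and one must check that $\xi\mapsto\hat f(\xi,\cdot)$ is nonetheless continuous at the origin in $C^0_\tau(V_1)$, so that the inverse Fourier integral makes sense. This should follow from the $\lambda$-independent bound together with local elliptic estimates and uniqueness: any subsequential local limit of $\hat f(\xi_i,\cdot)$ along $\xi_i\to0$ lies in $C^{k,\alpha}_\tau$ and solves the $\xi=0$ equation, hence equals $\hat f(0,\cdot)$.
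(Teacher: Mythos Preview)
Your proposal is correct and follows essentially the same approach as the paper: Fourier-transform in the $\mathbf{C}$-direction, invert $\Delta_{V_1}-|\xi|^2$ mode-by-mode via Proposition~\ref{prop:V1invert} with its $\lambda$-independent bound, take the inverse transform over the compact support $K_0$, and upgrade via Schauder; for (2), differentiate in $\xi$ and use the rapid decay at positive mass to obtain smoothness of $\hat f$ in $\xi$. Your final paragraph on continuity of $\xi\mapsto\hat f(\xi,\cdot)$ at $\xi=0$ is a point the paper passes over, and your compactness-plus-uniqueness argument for it is the right way to handle it.
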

\begin{proof}
  The proof is similar to the proofs of
  Propositions~\ref{prop:solve1},~\ref{prop:solve2}. After Fourier
  transforming, the relevant equations are
  \[ \label{eq:aaa6} \Delta_{V_1} \hat{f} - |\xi|^2 \hat{f} = \hat{\chi}.\] 
  Proposition~\ref{prop:V1invert} implies that we can solve these
  equations with uniform estimates on $\Vert
  \hat{f}\Vert_{C^0_\tau}$. Taking the inverse Fourier transform we
  obtain a solution of $\Delta f = \chi$, with $f\in
  C^0_\tau$. The Schauder estimates then imply that $f\in
  C^{k,\alpha}_\tau$.

  In order to get the decay property (2), we can argue as in
  Propositions~\ref{prop:solve1}, \ref{prop:solve2}, by
  differentiating Equation~\eqref{eq:aaa6}. For $\xi\ne0$, the
  solutions of \eqref{eq:aaa6} decay faster than any inverse power of $\zeta$ in the
  $V_1$-direction, and so inductively we find that each derivative
  $\partial^l_\xi \hat{f}$ has the same decay. Therefore $\hat{f}$ is
  a smooth function $\mathbf{C}\to C^0_\tau(V_1)$ with compact
  support,  and from this  we obtain the
  required decay for $f$ in the $\mathbf{C}$ direction. 
\end{proof}

We can next follow the proofs of Corollary~\ref{cor:ker0} and
Proposition~\ref{prop:closedrange1} closely to prove
\begin{prop}\label{prop:closedrange3}
  Let $\tau\in (2-m,0)$. There exists a constant $C$ such that for any
  $f\in C^{k,\alpha}_\tau(\mathbf{C}\times V_1)$ we have
  \[ \Vert f\Vert_{C^{k,\alpha}_\tau} \leq C \Vert
    \Delta f\Vert_{C^{k-2,\alpha}_{\tau-2}}. \]
\end{prop}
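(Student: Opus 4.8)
The plan is to transcribe, with $V_0$ replaced by $V_1$, the two ingredients used for the model cone $X_0$: first a triviality statement for $\ker\Delta$ on $\mathbf{C}\times V_1$ in the weight range $\tau\in(2-m,0)$ (the analogue of Corollary~\ref{cor:ker0}), and then a Schauder-plus-blow-up argument promoting this to the uniform estimate (the analogue of Proposition~\ref{prop:closedrange1}).

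For the kernel statement, let $f\in C^{k,\alpha}_\tau(\mathbf{C}\times V_1)$ with $\Delta f=0$ and $\tau\in(2-m,0)$. Following the proof of Corollary~\ref{cor:ker0}, I would pick $\tau_1<\tau<\tau_2<0$, form a weighted space $L^2_\sigma(V_1)$ with $\sigma$ interpolating between $\zeta^{\tau_1}$ and $\zeta^{\tau_2}$, and view $f$ as a bounded map $\mathbf{C}\to L^2_\sigma(V_1)$; its distributional Fourier transform $\hat f$ in the $\mathbf{C}$-direction is then a distribution of finite order valued in $L^2_\sigma(V_1)$. To see that $\hat f$ is supported at the origin, I would test against $g$ whose Fourier transform is smooth with compact support away from $\{0\}\times V_1$, solve $\Delta h=\hat g$ with $h$ enjoying the fast $\mathbf{C}$-decay of Proposition~\ref{prop:solve3}(2), and integrate by parts. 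Hence $f$ is a polynomial in $z,\bar z$ with coefficients in $L^2_\sigma(V_1)$; boundedness forces $f$ to be independent of $z$, so $f\in\ker\Delta_{V_1}$ with decay rate $\tau<0$, and the integration-by-parts argument from the $\lambda=0$ case of Proposition~\ref{prop:V1invert} forces $f=0$.

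For the estimate itself I would argue as in Proposition~\ref{prop:closedrange1}. The Schauder estimates for the bounded-geometry metric $\zeta^{-2}g$ reduce the claim to $\|f\|_{C^0_\tau}\le C\|\Delta f\|_{C^{k-2,\alpha}_{\tau-2}}$, which I prove by contradiction: take $f_i$ with $\|f_i\|_{C^0_\tau}=1$, $\|\Delta f_i\|_{C^{k-2,\alpha}_{\tau-2}}\to 0$, pick $(z_i,x_i)$ with $|f_i(z_i,x_i)|>\frac{1}{2}\zeta(x_i)^\tau$, and translate so $z_i=0$. Since $\zeta\ge 1$ there are two cases. If $\zeta(x_i)$ stays bounded, pass to a subsequence $x_i\to x$, extract a local $C^{k,\alpha'}$ limit $f\in C^{k,\alpha}_\tau(\mathbf{C}\times V_1)$ with $\Delta f=0$ and $|f(0,x)|\ge\frac{1}{2}\zeta(x)^\tau>0$, contradicting the kernel statement. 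If $\zeta(x_i)\to\infty$, rescale the metric by $\zeta(x_i)^{-2}$ and take the pointed limit based at $(0,x_i)$: since $V_1$ was built in Section~\ref{sec:smoothing1} to be asymptotic to the cone $V_0$, its blow-down is $V_0$ and the limit is $\mathbf{C}\times V_0$ with the product cone metric, based at $(0,p)$ with $r(p)=1$, and the weight $\zeta$ rescales to the cone distance $r$. The rescaled functions $\zeta(x_i)^{-\tau}f_i$ then converge locally in $C^{k,\alpha'}$ to $f\in C^{k,\alpha}_\tau(\mathbf{C}\times V_0)$ with $\Delta f=0$ and $|f(0,p)|\ge\frac{1}{2}$, contradicting Corollary~\ref{cor:ker0}.

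The one point that requires genuine care is the blow-down in the second case: one must verify that rescaling $(V_1,g_{V_1})$ near a point at distance $D_i\to\infty$ by $D_i^{-2}$ converges, in the pointed sense and locally smoothly, to the cone $V_0$ (using that the smoothing region of $V_1$ sits at bounded radius), and that a uniform $C^0_\tau$ bound on $\mathbf{C}\times V_1$ therefore passes to a uniform $C^0_\tau$ bound on $\mathbf{C}\times V_0$. Everything else is a routine adaptation of the arguments already given for $X_0$.
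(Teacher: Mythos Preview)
Your proposal is correct and matches the paper's approach exactly: the paper simply states that Proposition~\ref{prop:closedrange3} follows by rerunning the proofs of Corollary~\ref{cor:ker0} and Proposition~\ref{prop:closedrange1}, and your write-up is a faithful elaboration of this. The one structural difference from Proposition~\ref{prop:closedrange1} is that the two cases in the blow-up argument are now governed by $\zeta(x_i)$ bounded versus $\zeta(x_i)\to\infty$ (rather than $w(x_i)$ bounded away from zero versus $w(x_i)\to 0$), and the rescaled limit in the second case is obtained from the asymptotically conical geometry of $V_1$ at infinity rather than from the edge structure of $L$ near its singular circle; in both situations the limit is $\mathbf{C}\times V_0$ and the contradiction comes from Corollary~\ref{cor:ker0}, exactly as you say.
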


Finally we can prove the following. 
\begin{prop}\label{prop:CV1invert2}
  The Laplacian
  \[ \Delta_{\mathbf{C}\times V_1} : C^{k,\alpha}_{\tau}(\mathbf{C}\times V_1) \to
    C^{k-2,\alpha}_{\tau-2}(\mathbf{C}\times V_1) \]
  is invertible for $\tau\in (2-m, 0)$. 
\end{prop}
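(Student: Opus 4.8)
The plan is to repeat the proof of Proposition~\ref{prop:RLinvert} almost verbatim, now using the analysis of $\mathbf{C}\times V_1$ developed above in place of that of $\mathbf{R}\times L$. Injectivity and closed range are immediate from the a priori estimate of Proposition~\ref{prop:closedrange3}: if $\Delta f=0$ then $\Vert f\Vert_{C^{k,\alpha}_\tau}\le C\Vert\Delta f\Vert_{C^{k-2,\alpha}_{\tau-2}}=0$, and the same estimate shows that the image of $\Delta$ is closed. It therefore remains to prove surjectivity.

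For surjectivity, fix $u\in C^{k-2,\alpha}_{\tau-2}(\mathbf{C}\times V_1)$. First I would choose a sequence of smooth functions $\chi_i$ whose Fourier transforms $\hat\chi_i$ in the $\mathbf{C}$-direction have compact support, with $\chi_i\to u$ locally uniformly and $\Vert\chi_i\Vert_{C^{k-2,\alpha}_{\tau-2}}\le C$ for a constant $C$ depending only on $u$ --- the same mollification used in the proof of Proposition~\ref{prop:RLinvert} (convolution in the $\mathbf{C}$-variable with a fixed Schwartz kernel, together with a cutoff, all of which cost only bounded factors in the weighted norm since $\tau<0$ and $\zeta$ is bounded below). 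Proposition~\ref{prop:solve3} then produces $f_i\in C^{k,\alpha}_\tau(\mathbf{C}\times V_1)$ with $\Delta f_i=\chi_i$, and Proposition~\ref{prop:closedrange3} gives $\Vert f_i\Vert_{C^{k,\alpha}_\tau}\le C$ uniformly in $i$. Interior Schauder estimates and a diagonal argument yield, after passing to a subsequence, a limit $f_i\to f$ locally in $C^{k,\alpha'}$ for $\alpha'<\alpha$, with $\Vert f\Vert_{C^{k,\alpha}_\tau}\le C$; passing to the limit in $\Delta f_i=\chi_i$ using $\chi_i\to u$ locally uniformly gives $\Delta f=u$, so $\Delta$ is onto.

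The conceptual point --- and the reason one cannot simply invoke a density argument --- is exactly as in Proposition~\ref{prop:RLinvert}: the band-limited functions $\chi$ to which Proposition~\ref{prop:solve3} applies are \emph{not} dense in $C^{k-2,\alpha}_{\tau-2}$ in the H\"older norm, only in a weaker, locally uniform topology. This is precisely why the uniform a priori estimate of Proposition~\ref{prop:closedrange3} is needed: it converts the weak convergence of the approximating data into a genuinely convergent subsequence of solutions in the weighted H\"older space. Apart from this, every ingredient --- the Fourier reduction, the uniform solvability on $V_1$ from Proposition~\ref{prop:V1invert}, and the blow-up estimate behind Proposition~\ref{prop:closedrange3} --- has already been established, so I expect no new difficulty to arise beyond bookkeeping.
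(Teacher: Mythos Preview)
Your proposal is correct and follows essentially the same approach as the paper's own proof, which simply states that the argument is identical to that of Proposition~\ref{prop:RLinvert}, using Proposition~\ref{prop:solve3} and Proposition~\ref{prop:closedrange3} in place of Proposition~\ref{prop:solve2} and Proposition~\ref{prop:closedrange1}. Your observation that injectivity follows directly from the a priori estimate of Proposition~\ref{prop:closedrange3} is fine, and your surjectivity argument via approximation by band-limited data together with the uniform bound and a compactness/diagonal extraction is exactly what the paper does.
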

\begin{proof}
  The proof of this, based on Corollary~\ref{cor:ker0} and
 Propositions~\ref{prop:solve3}
  and~\ref{prop:closedrange3}
   is essentially identical to the proof of Proposition~\ref{prop:RLinvert}. 
\end{proof}

\section{Inverting the Laplacian}\label{sec:Laplaceinverse}
In this section we study the mapping properties of the Laplacian in
the weighted spaces that we have defined. The main result is the
following. 
\begin{prop}\label{prop:invert}
  Suppose that we choose $\tau\in (4-2n,0)$ and $\delta$ avoids a
  discrete set of indicial roots. For sufficiently large $A > 0$ the
  Laplacian
  \[ \Delta : C^{2,\alpha}_{\delta, \tau}(\rho^{-1}[A,\infty), \omega) \to
  C^{0,\alpha}_{\delta-2, \tau-2}(\rho^{-1}[A,\infty),\omega) \]
  is surjective with inverse bounded independently of $A$. 
\end{prop}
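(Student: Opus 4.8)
The plan is to construct an approximate right inverse $Q$ for $\Delta$ by patching together the inverses on the three model spaces supplied by Section~\ref{sec:modelspace}, then to correct it to a genuine right inverse via a Neumann series. Given $u \in C^{0,\alpha}_{\delta-2,\tau-2}(\rho^{-1}[A,\infty),\omega)$, first extend $u$ to all of $X_1$ using the extension operator $E$ of Proposition~\ref{prop:extensionop}. Next decompose $Eu$ using a partition of unity subordinate to a cover of $X_1$ by regions of three types: the ``conical'' region $\mathcal{U} = \{R > \Lambda\rho^{1/d}\}$ where $(X_1,\omega)$ is modeled on $X_0$ (Proposition~\ref{prop:R>L}), a collar of patches $\mathcal{V}$ near the singular rays where $\omega$ is modeled on $\mathbf{C}\times V_1$ after rescaling by $|z_0|^{-2/d}$ (Proposition~\ref{prop:R<L}), and — to deal with the non-compactness of $\mathbf{C}\times V_0$ in the $\mathbf{C}$ direction within the conical region at intermediate $R$ — an intermediate layer where the Fourier-analytic model $\mathbf{C}\times V_0$ of Corollary~\ref{cor:ker0} is relevant. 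On each model space we invert: on $X_0$ use Proposition~\ref{prop:RLinvert} with the conformal-to-cylinder picture (recall $\tau \in (2-m,0)$ corresponds exactly to $\tau \in (4-2n,0)$), on $\mathbf{C}\times V_1$ use Proposition~\ref{prop:CV1invert2}, and on $\mathbf{C}\times V_0$ use Proposition~\ref{prop:solve1}/\ref{cor:ker0}. Reassembling with the cutoff functions gives $Q_0 u$ with $\Delta Q_0 u = u + S u$, where $S$ collects the commutator terms $[\Delta,\chi_i]$ and the error between $\omega$ and each model metric.

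**Controlling the error term $S$.**

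The heart of the argument is the estimate $\|S\|_{C^{0,\alpha}_{\delta-2,\tau-2}\to C^{0,\alpha}_{\delta-2,\tau-2}} < \tfrac12$ for $A$ large. This has two contributions. The metric-comparison errors are handled by Propositions~\ref{prop:R>L} and~\ref{prop:R<L}: the difference $G^*g_{X_0}-g$, resp. $|z_0|^{2/d}H^*g_{\mathbf{C}\times V_1}-g$, has $C^{k,\alpha}_{0,0}$-norm less than $\epsilon$, and since $\Delta$ is a second-order operator this translates into an operator-norm bound of size $C\epsilon$ on the relevant patch (the weight shift by $(-2,-2)$ is exactly absorbed because the model inverses gain two weights). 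The commutator errors $[\Delta,\chi_i]$ are first-order operators supported in the overlap annuli; there the cutoffs $\gamma_j(R\rho^{-\alpha})$ (and their analogues separating the $\mathbf{C}\times V_0$ layer) have $|\nabla\chi_i|_\omega \lesssim (\rho w)^{-1}$ and $|\nabla^2\chi_i|_\omega \lesssim (\rho w)^{-2}$ by the bounded-geometry structure, so $[\Delta,\chi_i]$ maps $C^{2,\alpha}_{\delta,\tau}\to C^{0,\alpha}_{\delta-1,\tau-1}$, which embeds into $C^{0,\alpha}_{\delta-2,\tau-2}$ with small norm provided the overlap regions are pushed out to large $\rho$, i.e. $A$ large. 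Crucially, one needs the a priori bound $\|Q_0\|\le C$ uniform in $A$: this comes from the uniform-in-$A$ bounds in the model Propositions together with the uniform bound on $\|E\|$ from Proposition~\ref{prop:extensionop}.

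**Conclusion and the main obstacle.**

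Once $\|S\| < \tfrac12$, the operator $I+S$ is invertible on $C^{0,\alpha}_{\delta-2,\tau-2}(X_1,\omega)$, so $Q := Q_0(I+S)^{-1}E$ satisfies $\Delta Q u = u$ on $\rho^{-1}[A,\infty)$ after restriction, and $\|Q\| \le 2\|Q_0\|\,\|E\| =: C$ independently of $A$; this gives the claimed surjectivity with uniformly bounded right inverse. The step I expect to be the main obstacle is organizing the decomposition so that the $\mathbf{C}$-direction of the conical model is correctly captured: in the region $\kappa^{-1}\rho^{1/d} < R < \kappa\rho$ the metric looks like $\mathbf{C}\times V_0$ only after rescaling, and the Fourier-transform inversion of Proposition~\ref{prop:solve1} requires $\hat\chi$ supported away from $\{0\}\times V_0$, so one must split off the low-frequency ($\xi\approx 0$) part of the data and feed it instead into the cylinder model $\mathbf{R}\times L$ via Proposition~\ref{prop:RLinvert} — keeping track of the weight bookkeeping across this frequency split, and verifying the resulting cutoff errors are still small, is the delicate bookkeeping at the core of the proof. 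A secondary subtlety is that the three families of model patches overlap in pairs on annular regions where \emph{two} rescalings are simultaneously valid; one checks directly that the transition functions between the two local models are uniformly controlled, as in the proof of Proposition~\ref{prop:extensionop}, so no new error arises.
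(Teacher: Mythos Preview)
Your overall architecture (extend, decompose by cutoffs, invert on model spaces, correct by Neumann series) matches the paper, but the argument has a genuine gap in how the commutator errors are made small, and it introduces an unnecessary third model.

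\textbf{The commutator estimate does not work as stated.} With cutoffs satisfying $|\nabla\chi|\lesssim(\rho w)^{-1}$, the term $[\Delta,\chi]f = 2\nabla\chi\cdot\nabla f + (\Delta\chi)f$ lands directly in $C^{0,\alpha}_{\delta-2,\tau-2}$ (not $C^{0,\alpha}_{\delta-1,\tau-1}$: the weights multiply, so $(-1,-1)+(\delta-1,\tau-1)=(\delta-2,\tau-2)$), with operator norm bounded by a \emph{fixed} constant independent of $A$. There is no embedding $C^{0,\alpha}_{\delta-1,\tau-1}\hookrightarrow C^{0,\alpha}_{\delta-2,\tau-2}$ with small norm for large $A$; in fact $\rho w \gtrsim \rho^{1/d}\to\infty$, so the inclusion goes the wrong way. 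Pushing the overlap out to large $\rho$ therefore does \emph{not} make $S$ small. The paper's mechanism is different: it uses logarithmic cutoffs such as
\[
\beta_1 = \gamma_1\!\left(\frac{\ln(R\Lambda^{-1/2}\rho^{-1/d})}{\ln\Lambda^{1/4}}\right),
\]
supported on an annulus $\Lambda^{3/4}\rho^{1/d} < R < \Lambda\rho^{1/d}$ of logarithmic width $\sim\ln\Lambda$, so that $\|\nabla\beta_1\|_{C^{k,\alpha}_{-1,-1}} < C/\ln\Lambda$. An analogous cutoff $\beta_2$ in the $V_1$ direction and cutoffs $\chi_i,\tilde\chi_i$ in the $z$-direction at scale $B|z_i|^{1/d}$ play the same role. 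The order of limits is: first choose $\Lambda$ large to make the commutator errors small, then $B$ large (depending on $\Lambda$), and only then $A$ large (depending on $\Lambda,B$) to make the metric-comparison errors from Propositions~\ref{prop:R>L},~\ref{prop:R<L} small.

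\textbf{The third model and the frequency split are not needed.} The paper uses only two models in this proof: $X_0$ (via Proposition~\ref{prop:RLinvert}) on $\{R>\Lambda\rho^{1/d}\}$, and $\mathbf{C}\times V_1$ (via Proposition~\ref{prop:CV1invert2}) on each patch $\{|z-z_i|<2B|z_i|^{1/d},\ R<2\Lambda\rho^{1/d}\}$. The space $\mathbf{C}\times V_0$ and Proposition~\ref{prop:solve1} appear only in Section~\ref{sec:modelspace}, as ingredients in the blowup argument establishing invertibility on $X_0$; they are not invoked again in the parametrix construction, and no low/high frequency splitting in the $\mathbf{C}$-direction is required. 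The ``main obstacle'' you anticipate is therefore a red herring; the actual delicate point is the logarithmic-cutoff bookkeeping above.
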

The proof of this result will take up the remainder of this section.
The strategy is to construct an approximate inverse for the
Laplacian by localizing the problem on the different regions of 
$\rho^{-1}[A, \infty)$ for sufficiently large $A$, studied in
Propositions~\ref{prop:R>L} and \ref{prop:R<L}, and then using the
inverses constructed on corresponding model spaces in 
Section~\ref{sec:modelspace}. 

Suppose that we have a function $u \in
C^{0,\alpha}_{\delta-2,\tau-2}(\rho^{-1}[A,\infty),\omega)$, with
norm $\Vert u\Vert_{C^{0,\alpha}_{\delta-2,\tau-2}} < 1$. 
The goal is to construct, once $A$ is sufficiently large, a
function $f = Pu$ on $X_1$ with $\Vert
f\Vert_{C^{2,\alpha}_{\delta,\tau}(X_1,\omega)} < C$ for a uniform $C$
such that
\[ \Vert \Delta f -
  u\Vert_{C^{0,\alpha}_{\delta-2,\tau-2}(\rho^{-1}[A,\infty),\omega)}
  < \frac{1}{2}. \]
 Then the operator $P$ is an approximate inverse for $\Delta$, so
 $\Delta P$ is invertible, and as a consequence 
 $\Delta$ has a bounded right inverse.

 Using the extension map defined in Proposition~\ref{prop:extensionop}
 we can assume that $u$ is actually defined on all of
$X_1$ and
\[ \Vert u\Vert_{C^{0,\alpha}_{\delta-2,\tau-2}(X_1,\omega)} < C, \]
for a constant $C$ independent of $A$. 
We will decompose $u$ using cutoff functions into various different
pieces. Let us recall the cutoff functions $\gamma_1,\gamma_2$ from
before, so that $\gamma_1(s)$ is supported where $s > 1$, and
$\gamma_1 + \gamma_2=1$. Let us choose a large number $\Lambda$, and
write $u= u_1 + u_2$, where
\[ u_i = \gamma_i(R\Lambda^{-1}\rho^{-1/d}) u. \]
So $u_1$ is supported on the set where $R > \Lambda
\rho^{1/d}$. Let us use the notation $\mathcal{U} = \{R >
\Lambda\rho^{1/d}\}\cap \{\rho > A\}$ from Proposition~\ref{prop:R>L}.
We then have a map $G:\mathcal{U} \to X_0$ such that
\[ \label{eq:m1} \Vert G^*\omega_{X_0} - \omega\Vert_{C^{k,\alpha}_{0,0}} <
  \epsilon. \]
We can use the map $G$ to view $u_1$  (at least on the set where $\rho
> A$) as a function on $X_0$, supported away from $\mathbf{C}\times
0$. 
Proposition~\ref{prop:RLinvert} implies 
\begin{prop}\label{prop:model1}
  As long as $\tau\in (4-2n, 0)$, and $\delta$ avoids a discrete set
  of indicial roots, the Laplacian
  \[ \Delta_{X_0} : C^{k,\alpha}_{\delta,\tau}(X_0) \to
    C^{k-2,\alpha}_{\delta-2,\tau-2}(X_0) \]
  is invertible. 
\end{prop}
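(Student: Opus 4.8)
The plan is to deduce this directly from Proposition~\ref{prop:RLinvert} by using the conformal rescaling of the cone $X_0 = \mathbf{C}\times V_0$ to the cylinder that was already set up above. Recall that away from the singular rays $\mathbf{C}\times\{0\}$ the metric $g_{X_0} = dr^2 + r^2 h_L$ is the cone over the (incomplete) link $L$, and that the rescaled metric $\tilde g_{X_0} = r^{-2}g_{X_0} = dt^2 + h_L$, with $t = \ln r$, is a cylinder. Under this rescaling the equation $\Delta_{g_{X_0}} f = u$ becomes
\[ \mathcal{L} f = e^{2t}u, \qquad \mathcal{L} = \Delta_{h_L} + \partial_t^2 + (2n-2)\partial_t, \]
exactly as recorded in the passage preceding Proposition~\ref{prop:RLinvert}. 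So the whole content of Proposition~\ref{prop:model1} is really the invertibility of $\mathcal{L}$, which is what was proved on the cylinder model.

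The one thing I would do carefully is match up the weighted spaces. By the very definition of the norms on $X_0$ in terms of $\tilde g_{X_0}$, a function lies in $C^{k,\alpha}_{\delta,\tau}(X_0)$ if and only if it lies in $C^{k,\alpha}_{\delta,\tau}(\mathbf{R}\times L)$; and since $e^{2t}\cdot e^{-(\delta-2)t} = e^{-\delta t}$, multiplication by $e^{2t}$ carries $C^{k-2,\alpha}_{\delta-2,\tau-2}(X_0)$ isomorphically onto $C^{k-2,\alpha}_{\delta,\tau-2}(\mathbf{R}\times L)$. Note that the transverse weight $\tau$ is untouched by this rescaling, because $r$ does not involve $w$; only the radial weight shifts, by $2$, on account of the factor $e^{2t}$. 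It follows that $\Delta_{X_0}:C^{k,\alpha}_{\delta,\tau}(X_0)\to C^{k-2,\alpha}_{\delta-2,\tau-2}(X_0)$ is invertible if and only if $\mathcal{L}:C^{k,\alpha}_{\delta,\tau}(\mathbf{R}\times L)\to C^{k-2,\alpha}_{\delta,\tau-2}(\mathbf{R}\times L)$ is. Finally, conjugating by $e^{\delta t}$ as in the definition $\mathcal{L}_\delta(f)=e^{\delta t}\mathcal{L}(e^{-\delta t}f)$ identifies this last invertibility with that of $\mathcal{L}_\delta:C^{k,\alpha}_{\tau}(\mathbf{R}\times L)\to C^{k-2,\alpha}_{\tau-2}(\mathbf{R}\times L)$, which is precisely Proposition~\ref{prop:RLinvert}, valid for $\tau\in(2-m,0)=(4-2n,0)$ and for $\delta$ outside the discrete set of indicial roots appearing there. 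These are exactly the hypotheses in the statement, so the claim follows.

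I do not expect a genuine obstacle here: the analytic heart of the matter — triviality of the kernel (Corollary~\ref{cor:ker02}, which in turn rests on the exponential-decay solvability of Proposition~\ref{prop:solve2} and the product model Corollary~\ref{cor:ker0}), the closed-range estimate via the blow-up argument of Proposition~\ref{prop:closedrange1}, and the surjectivity argument of Proposition~\ref{prop:RLinvert} — has all already been carried out on the cylinder $\mathbf{R}\times L$. The only things to be careful about in writing this up are the exact bookkeeping of the weight shift by $2$ in the radial direction, and confirming that $X_0\setminus(\mathbf{C}\times\{0\})$ is indeed a metric cone over the link $L$ whose completion is the depth-$1$ iterated edge space to which the cylindrical theory applies; both are immediate from the set-up. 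In the eventual application $u_1$ will in fact be supported away from $\mathbf{C}\times\{0\}$, but the proposition as stated needs no such restriction.
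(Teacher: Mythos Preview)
Your proposal is correct and is exactly the paper's approach: the paper simply says ``Proposition~\ref{prop:RLinvert} implies'' before stating Proposition~\ref{prop:model1}, and you have spelled out the bookkeeping of that implication. One small slip: the identity you quote should read $e^{-\delta t}\cdot e^{2t} = e^{-(\delta-2)t}$ rather than $e^{2t}\cdot e^{-(\delta-2)t} = e^{-\delta t}$, but your conclusion that multiplication by $e^{2t}$ shifts the radial weight from $\delta-2$ to $\delta$ is correct.
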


We can therefore define $Pu_1$ on $X_0$ satisfying $\Delta_{X_0}
Pu_1 = u_1$, and satisfying the estimate
\[ \Vert Pu_1\Vert_{C^{2,\alpha}_{\delta,\tau}(X_0)} < C, \]
for a uniform $C$. In order to transfer this function back to $X_1$,
we use another cutoff function
\[ \beta_1 = \gamma_1\left(\frac{\ln (R\Lambda^{-1/2}\rho^{-1/d})}{\ln
      \Lambda^{1/4}}\right). \]
This has the property that $\beta_1 = 0$ on the region where $R <
\Lambda^{3/4} \rho^{1/d}$, while $\beta_1=1$ on the support of
$\gamma_1(R\Lambda^{-1}\rho^{-1/d})$, which is where $u_1$ is
supported. Moreover in our weighted spaces
we have the estimate
\[ \label{eq:a5} \Vert \nabla \beta_1
  \Vert_{C^{k,\alpha}_{-1,-1}(\rho^{-1}(1,\infty)\cap X_0)} <
  \frac{C}{\ln \Lambda}. \]
We have
\[ \Delta_{X_0}(\beta_1Pu_1) = u_1 +
2  \nabla\beta_1\cdot \nabla(Pu_1) + (\Delta_{X_0}\beta_1) Pu_1, \]
and so using the multiplication properties
\[ \Vert fg\Vert_{C^{k,\alpha}_{a+b, c+d}} \leq C \Vert
  f\Vert_{C^{k,\alpha}_{a,c}} \Vert g\Vert_{C^{k,\alpha}_{b,d}} \]
together with \eqref{eq:a5} we obtain
\[ \label{eq:aa1} \Vert \Delta_{X_0}(\beta_1 Pu_1) - u_1
  \Vert_{C^{0,\alpha}_{\delta-2,\tau-2}(\rho^{-1}(1,\infty)\cap
    X_0)} < \frac{C}{\ln \Lambda}. \]
Using the map $G$ again to view $\beta_1Pu_1$ as a function on $X_1$, 
and using \eqref{eq:m1} to compare the Laplacians on
$X_0$ and $X_1$ we find that
\[ \label{eq:aa2} \Vert \Delta_\omega(\beta_1 Pu_1) - u_1
  \Vert_{C^{0,\alpha}_{\delta-2,\tau-2}(\rho^{-1}[A,\infty))} <
  \epsilon, \]
once $\Lambda$ and $A$ are sufficiently large. 

We next need to examine the piece $u_2$, which is supported in the
region where $R < 2\Lambda \rho^{1/d}$, and we are assuming that 
in addition $\rho > A$. Note that here we have $\rho \sim
|z|$. Geometrically this region can be thought of roughly as
a fibration over the set $\{ |z| > A \} \subset \mathbf{C}$, whose
fiber over $z$ is the region $\{ r < 2\Lambda \} \subset V_1$, scaled
down by a factor of $|z|^{1/d}$. We decompose $u_2$ into pieces whose
supports are localized in the $z$-plane. Proposition~\ref{prop:R<L}
tells us that suitably
scaled, on these regions we can approximate our space with a
corresponding region in the product $\mathbf{C} \times V_1$. By
Proposition~\ref{prop:CV1invert2}, we can invert
the Laplacian there. 

Let us choose a large $B > 0$. We construct cutoff functions $\chi_i$
on $\mathbf{C}$ such that $\sum\chi_i = 1$ on the set where
$|z| > A$ as follows. Consider
$(\mathbf{C}, \tilde{g})$, where $\tilde{g} = B^{-2}|z|^{-2/d} g_{Euc}$ is a
conformal scaling of the Euclidean metric. Since $d > 1$, 
on the set where $|z| > A$ for sufficiently large $A$, this metric is
close to being Euclidean on larger and larger scales. 
We can then cover this region with disks of
radius 2 (in the metric $\tilde{g}$) centered at points $z_i$,
such that the corresponding disks
of radius 1 are disjoint, and define the cutoff funtions $\chi_i$
supported in the disks of radius 2, and equal to 1 on the disks of
radius 1. Scaling back the metric on $\mathbf{C}$ we have that
$\chi_i=1$ on the ball of radius $B|z_i|^{1/d}$ around
$z_i$, and $\chi_i=0$ outside of the ball of radius $2B|z_i|^{1/d}$
around $z_i$. In addition $|\nabla^l \chi_i|_{g_{Euc}} =
O(B^{-l}|z_i|^{-l/d})$ for all $l \geq 0$. 

We define another set of cutoff functions
$\tilde{\chi}_i$ in a similar way, which we will use to transfer our local
solutions back to $X_1$ (analogous to $\beta_1$ above). The
$\tilde{\chi_i}$ equal 1 on the support of $\chi_i$, and are supported
in the balls of radius $3B|z_i|^{1/d}$. Furthermore $|\nabla^l
\tilde{\chi}_i| = O(B^{-l}|z_i|^{-l/d})$ as well. An additional
important property of these cutoff
functions, which can be seen more clearly in terms of the conformally
scaled metric $\tilde{g}$, is that any $z$ with $|z| > A$ is in the support of only a
fixed bounded number $N$ of the $\tilde{\chi}_i$ ($N$ is independent of the choices of
large $B, A$). 

We now decompose $u_2$ into the pieces $u_2 = \sum_i \chi_i u_2$, at
least on the region where $|z| > A$. By construction the function
$\chi_i u_2$ is supported on a region where $z \in B(z_i, 2B
|z_i|^{1/d})$ for a point $z_i \in \mathbf{C}$, and in addition $R <
2\Lambda |z|^{1/d}$. By Proposition~\ref{prop:R<L}, on this region the
scaled metric $|z_i|^{-2/d}\omega$ can be approximated by the product
metric $\omega_{\mathbf{C}}\times \omega_{V_1}$ on $\mathbf{C}\times
V_1$, using the map $H$. Here we have the following result from
Section~\ref{sec:modelspace}. 
\begin{prop}\label{prop:CV1invert}
  The Laplacian
  \[ \Delta_{\mathbf{C}\times V_1} : C^{k,\alpha}_{\tau}(\mathbf{C}\times V_1) \to
    C^{k-2,\alpha}_{\tau-2}(\mathbf{C}\times V_1) \]
  is invertible for $\tau\in (4-2n, 0)$. 
\end{prop}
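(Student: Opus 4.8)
The plan is first to observe that this proposition coincides with Proposition~\ref{prop:CV1invert2} --- since $m = 2n-2$ we have $2-m = 4-2n$ --- so there is nothing genuinely new to do, only to assemble the pieces already established for the model space $\mathbf{C}\times V_1$ in Section~\ref{sec:modelspace}. In more detail, I would organize the argument exactly as the proof of Proposition~\ref{prop:RLinvert}, in three steps: injectivity, closed range, and surjectivity.

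For injectivity I would rerun the distributional Fourier-transform argument of Corollary~\ref{cor:ker0}. If $f\in C^{k,\alpha}_\tau(\mathbf{C}\times V_1)$ with $\Delta f = 0$ and $\tau\in(4-2n,0)$, take the Fourier transform $\hat f$ in the $\mathbf{C}$-direction; the weight range $(4-2n,0)$ is the good one for $\Delta_{V_1}$ on the asymptotically conical manifold $V_1$ (Lockhart--McOwen~\cite{LM85}, equivalently Proposition~\ref{prop:V1invert} with $\lambda=0$), so one may pair $\hat f$ with $\hat g$ for compactly supported test functions $g$ and integrate by parts against a decaying solution $h$ of $\Delta h = \hat g$ supplied by Proposition~\ref{prop:solve3}. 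This shows $\hat f$ is a finite-order distribution supported at $\xi=0$, hence $f$ is a polynomial in $z,\bar z$ with coefficients in a weighted $L^2$-space on $V_1$; boundedness forces $f$ to be independent of $z$, and triviality of $\ker\Delta_{V_1}$ in this weight range then gives $f=0$.

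For the closed range I would invoke Proposition~\ref{prop:closedrange3}, the a priori estimate $\Vert f\Vert_{C^{k,\alpha}_\tau}\le C\Vert\Delta f\Vert_{C^{k-2,\alpha}_{\tau-2}}$; this is proved by the usual blowup contradiction, reducing first to a $C^0_\tau$ bound via the Schauder estimates for the bounded-geometry conformal metric $\zeta^{-2}g$, and then observing that a violating sequence, after translating in $\mathbf{C}$ and rescaling around its concentration points, converges to a nonzero bounded harmonic function either on $\mathbf{C}\times V_1$ itself (contradicting the injectivity step) or, when the concentration escapes to infinity along the $V_1$-factor, on the product of flat $\mathbf{C}$ with the cone $V_0$, contradicting Corollary~\ref{cor:ker0}. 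Granting the a priori estimate, surjectivity then follows exactly as in Proposition~\ref{prop:RLinvert}: given $u\in C^{k-2,\alpha}_{\tau-2}$, approximate it by smooth $\chi_i$ with $\hat\chi_i$ of compact support, $\chi_i\to u$ locally uniformly and $\Vert\chi_i\Vert_{C^{k-2,\alpha}_{\tau-2}}\le C$; Proposition~\ref{prop:solve3} produces $f_i$ with $\Delta f_i = \chi_i$, the a priori estimate gives $\Vert f_i\Vert_{C^{k,\alpha}_\tau}\le C$ uniformly in $i$, and a subsequence converging locally in $C^{k,\alpha'}$ yields $f$ with $\Delta f = u$ and $\Vert f\Vert_{C^{k,\alpha}_\tau}\le C$. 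Together with injectivity this produces the bounded two-sided inverse.

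The step I expect to be the real obstacle is surjectivity. The only elements of the range available for free are the Fourier-constructed solutions of Proposition~\ref{prop:solve3}, namely those whose right-hand side has compactly supported Fourier transform, and these do not span a dense subspace of $C^{k-2,\alpha}_{\tau-2}$; so closedness of the range together with a plentiful but non-dense supply of image elements is not by itself enough. The approximation-and-compactness device above circumvents this, but it genuinely uses the \emph{uniform} a priori estimate of Proposition~\ref{prop:closedrange3} rather than mere closedness of the range. Everything else is a routine transcription of the $\mathbf{R}\times L$ and $\mathbf{C}\times V_0$ arguments already carried out in Section~\ref{sec:modelspace}.
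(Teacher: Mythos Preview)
Your proposal is correct and matches the paper's approach exactly: the paper also observes that this is Proposition~\ref{prop:CV1invert2} (with $2-m=4-2n$), whose proof it says ``is essentially identical to the proof of Proposition~\ref{prop:RLinvert}'' using Corollary~\ref{cor:ker0} and Propositions~\ref{prop:solve3} and~\ref{prop:closedrange3}. Your three-step breakdown (injectivity via the distributional Fourier argument, the uniform a priori estimate via blowup to $\mathbf{C}\times V_1$ or $\mathbf{C}\times V_0$, and surjectivity via approximation by compactly-Fourier-supported data plus compactness) is precisely the scheme the paper has in mind, and your identification of the surjectivity step as the one requiring the \emph{uniform} estimate rather than mere closed range is on point.
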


In order to apply this, we need to view $\chi_i u_2$ as a function on
$\mathbf{C}\times V_1$, using the map $H$, and relate its weighted
norm in $C^{0,\alpha}_{\tau-2}$ on $\mathbf{C}\times V_1$ to the
norm in $C^{0,\alpha}_{\delta-2,\tau-2}$ on $X_1$. For this, note
that on our region we have $\rho \sim |z_i|$, and so by the definition
of the weight function $w$ we have
\[ \rho^{-2}w^{-2}&\sim \max\{|z_i|^{1/d},R\}^{-2} \\
\rho^{\delta-2}w^{\tau-2} &\sim |z_i|^{\delta-\tau}
\max\{|z_i|^{1/d},R\}^{\tau-2}. \]
At the same time the weight function $\hat{\zeta}$ used to define the
weighted spaces on $\mathbf{C} \times V_1$ (using the
notation from Proposition~\ref{prop:R<L}), is comparable to $\max\{1,
|z_i|^{-1/d}R\}$. It follows that the estimate $\Vert
u_2\Vert_{C^{0,\alpha}_{\delta-2, \tau-2}} < C$ on $X_1$ translates to 
\[ \label{eq:uu1} \Vert u_2\Vert_{C^{0,\alpha}_{\tau-2}(\mathbf{C}\times V_1)} < C
|z_i|^{\delta-\tau + \frac{\tau-2}{d}} \]
on the support of $\chi_i$. Note also that by construction, on the
support of $u_2$ we have  
\[ \label{eq:uu2}|\nabla^l \chi_i| < CB^{-l} < CB^{-l}\Lambda^l \hat{\zeta}^{-l}, \]
 thinking of $\chi_i$ as a function on $\mathbf{C}\times V_1$ and
 using the Euclidean metric on $\mathbf{C}$ (since
 $\hat{\zeta} < C\Lambda$). It follows that once $B$ is sufficiently
 large, depending on $\Lambda$, the estimate \eqref{eq:uu1} implies
\[ \Vert \chi_i u_2\Vert_{C^{0,\alpha}_{\tau-2}(\mathbf{C}\times V_1)} < C
|z_i|^{\delta-\tau + \frac{\tau-2}{d}}. \]
We now apply Proposition~\ref{prop:CV1invert}, but note that 
$\Delta_{|z_i|^{-2/d}\omega} = |z_i|^{2/d}
\Delta_\omega$. We therefore use the Proposition to define $P(\chi_i
u_2)$ by
\[ \Delta_{\mathbf{C}\times V_1} P(\chi_i u_2) = |z_i|^{2/d} \chi_i
  u_2, \]
satisfying the bound
\[ \Vert P(\chi_i u_2)\Vert_{C^{2,\alpha}_\tau} < C |z_i|^{\delta-\tau
  + \frac{\tau}{d}}. \]
We need to transfer this function back to the manifold $X_1$. Note
that in terms of the coordinate $\hat{z}_i$ from
Proposition~\ref{prop:R<L}
on the $\mathbf{C}$ factor and the
distance function $\hat{\zeta}$ on $V_1$, the function
$\chi_i u_2$ is supported in the region where $|\hat{z}_i| < 2B$ and $\hat{\zeta} < 4
\Lambda$. We use the cutoff functions $\tilde{\chi}_i$ from above, 
which equal 1 on the supports of $\chi_i$, and are supported, 
in these coordinates, where $|\hat{z}_i| < 3B$. In addition we use the cutoff
function
\[ \beta_2 = \gamma_2\left( \frac{\ln (\hat{\zeta} 4^{-1} \Lambda^{-1})}{\ln
      \Lambda}\right), \]
which equals 1 where $\hat{\zeta} < 4\Lambda$, vanishes where
$\hat{\zeta} >
4\Lambda^2$ and has the property that in our weighted
spaces
\[ \label{eq:a3} \Vert \nabla \beta_2\Vert_{C^{k,\alpha}_{-1}} < \frac{C}{\ln
    \Lambda}. \]
We now need an estimate analogous to \eqref{eq:aa1} for the difference
\[ \Delta_{\mathbf{C}\times V_1}(\tilde{\chi}_i\beta_2 P(\chi_i u_2)) -
|z_i|^{2/d} \chi_i u_2 =2 \nabla (\beta_2\tilde{\chi}_i)\cdot \nabla P(\chi_i u_2) +
\Delta_{\mathbf{C}\times V_1}(\beta_2\tilde{\chi}_i) P(\chi_i u_2).\] 
Using the estimate \eqref{eq:a3} for $\beta_2$ and an estimate
analogous to \eqref{eq:uu2} for $\tilde{\chi}_i$ (with $\Lambda$
replaced by $\Lambda^2$ and $B$ chosen correspondingly larger), we find that
\[ \Vert \Delta_{\mathbf{C}\times V_1} (\tilde{\chi}_i\beta_2 P(\chi_i u_2)) -
|z_i|^{2/d} \chi_i u_2 \Vert_{C^{0,\alpha}_{\tau-2}(\mathbf{C}\times
  V_1)} < C\epsilon |z_i|^{\delta-\tau + \frac{\tau}{d}}. \]
Proposition~\ref{prop:R<L} allows us to estimate the difference
between $|z_0|^{-2/d}\omega$ and the product metric on $\mathbf{C}\times
V_1$ under the identification using the map $H$, and this leads to
\[  \Vert \Delta_{|z_i|^{-2/d}\omega} (\tilde{\chi}_i\beta_2 P(\chi_i u_2)) -
|z_i|^{2/d} \chi_i u_2 \Vert_{C^{0,\alpha}_{\tau-2}(\mathbf{C}\times
  V_1)} < C\epsilon |z_i|^{\delta-\tau + \frac{\tau}{d}}. \]
Dividing through by $|z_i|^{2/d}$, and translating the estimate back
to our weighted spaces on $X_1$, we get
\[ \label{eq:uu3} \Vert \Delta_\omega(\tilde{\chi}_i\beta_2 P(\chi_i u_2)) - \chi_i
u_2 \Vert_{C^{0,\alpha}_{\delta-2,\tau-2}} < C\epsilon. \]

Finally we define
\[ Pu = \beta_1 Pu_1 + \sum_i \beta_2 \tilde{\chi}_i P(\chi_i u_2). \]
We use the estimates \eqref{eq:aa2} and \eqref{eq:uu3}, together with
the observation that any given point is contained in at most a fixed
number of our regions, to deduce that
\[ \Vert \Delta(Pu) - u\Vert_{C^{0,\alpha}_{\delta-2,\tau-2}} <
  C\epsilon < 1/2, \]
for sufficiently small $\epsilon$, which by the above discussion we
can achieve by first choosing $\Lambda$, then $B$, and finally $A$
sufficiently large. In this case $\Delta P$ is invertible, with
$\Vert (\Delta P)^{-1}\Vert < 2$. Since by
construction the operator $P$ has bounded norm independent of $A$, we
have constructed a right inverse $P(\Delta P)^{-1}$ for $\Delta$ for
sufficiently large $A$, with norm independent of $A$. This completes
the proof of Proposition~\ref{prop:invert}. 

\section{Calabi-Yau metrics on $\mathbf{C^n}$}\label{sec:perturb}
In Section~\ref{sec:approx1} we wrote down a form $\omega$ on the
hypersurface $X_1$, whose Ricci
potential decays in a suitable weighted space by
Proposition~\ref{prop:decayest2}. 
Our goal is to use the linear theory developed in
Section~\ref{sec:Laplaceinverse} to improve
the decay of the Ricci potential enough to be able
to apply Hein~\cite[Proposition 4.1]{HeinThesis} to construct a global
Calabi-Yau metric on $X_1$. Since we will use a similar method in
Section~\ref{sec:singularity} below, we will actually perturb $\omega$
to a metric $\tilde{\omega}$ which is Calabi-Yau on the set $\rho^{-1}[A,\infty)$ for
sufficiently large $A$.

\begin{prop}\label{prop:perturbed}
  Suppose that $A$ is sufficiently large, $\tau <0$ is sufficiently
  close to 0, and $\delta <2/d$ is as in Proposition~\ref{prop:decayest2}. 
  Then there exists a small
  $u\in C^{2,\alpha}_{\delta, \tau}$ such that 
  \[ (\omega + \ddb u)^n = (\sqrt{-1})^{n^2} \Omega\wedge \overline{\Omega} \]
  on the set $\rho^{-1}[A,\infty)$. 
\end{prop}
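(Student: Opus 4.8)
The plan is to solve the Monge--Ampère equation on the end $\rho^{-1}[A,\infty)$ by a contraction mapping argument built on the linear theory of Proposition~\ref{prop:invert}. Since by definition $\omega^n = e^h(\sqrt{-1})^{n^2}\Omega\wedge\overline{\Omega}$, the desired equation is equivalent to $\log\frac{(\omega+\ddb u)^n}{\omega^n}=-h$, which I write as $\Delta u + \mathcal{N}(u) = -h$ with $\mathcal{N}(u)=F(g^{-1}\ddb u)$ and $F(T)=\log\det(I+T)-\operatorname{tr}(T)$. Here $F$ is smooth near $T=0$ with $F(0)=0$, $DF(0)=0$, so $\mathcal{N}$ is quadratically small in $\ddb u$, and $\omega+\ddb u>0$ as soon as $|g^{-1}\ddb u|_\omega<1/2$.

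First I would fix the weights. Starting from $\delta<2/d$ and $\tau<0$ as in the statement, I note that $\tau$ close to $0$ lies in the admissible range $(4-2n,0)$ of Proposition~\ref{prop:invert} for all $n\ge 3$, and that --- enlarging $\delta$ slightly if necessary while keeping $\delta<2/d$ and avoiding the discrete set of indicial roots --- Proposition~\ref{prop:decayest2} gives $h\in C^{0,\alpha}_{\delta-2,\tau-2}(\rho^{-1}[A,\infty),\omega)$ with $\|h\|\to 0$ as $A\to\infty$. The quantitative fact I would isolate at the outset is that
\begin{equation}
\eta(A):=\sup_{\rho>A}\rho^{\delta-2}w^{\tau-2}\longrightarrow 0\qquad(A\to\infty):
\end{equation}
where $w\sim 1$, this is at most $A^{\delta-2}$, while along the singular rays $w\sim\rho^{1/d-1}$, so the exponent is $(\delta-2)+(1/d-1)(\tau-2)$, which converges to $\delta-2/d<0$ as $\tau\to 0$.

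Next I would run the iteration. Let $G$ be the right inverse of $\Delta$ on $\rho^{-1}[A,\infty)$ from Proposition~\ref{prop:invert}, with norm bounded independently of $A$, and seek $u=G\psi$; the equation becomes the fixed-point problem $\psi=-h-\mathcal{N}(G\psi)=:\Phi(\psi)$ in $C^{0,\alpha}_{\delta-2,\tau-2}(\rho^{-1}[A,\infty))$. Since $\|G\psi\|_{C^{2,\alpha}_{\delta,\tau}}\le C\|\psi\|$ and hence $|g^{-1}\ddb(G\psi)|_\omega\le C\rho^{\delta-2}w^{\tau-2}\|\psi\|\le C\eta(A)\|\psi\|$, for $A$ large and $\|\psi\|$ bounded the endomorphism $g^{-1}\ddb(G\psi)$ is pointwise small, so $F$ may be applied and $\omega+\ddb(G\psi)$ is a metric. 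Writing $\mathcal{N}(G\psi)=F(g^{-1}\ddb(G\psi))$ via Taylor's formula as a term bilinear in $g^{-1}\ddb(G\psi)$ with coefficients bounded in $C^{0,\alpha}$, the multiplication property $\|fg\|_{C^{0,\alpha}_{a+b,c+d}}\le C\|f\|_{C^{0,\alpha}_{a,c}}\|g\|_{C^{0,\alpha}_{b,d}}$ gives $\|\mathcal{N}(G\psi)\|_{C^{0,\alpha}_{2\delta-4,2\tau-4}}\le C\|\psi\|^2$, and therefore, re-expressing in the target weights,
\begin{equation}
\|\mathcal{N}(G\psi)\|_{C^{0,\alpha}_{\delta-2,\tau-2}}\le C\,\eta(A)\,\|\psi\|^2,
\end{equation}
together with the corresponding Lipschitz bound with constant $C\eta(A)(\|\psi_1\|+\|\psi_2\|)$. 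Hence for $A$ large $\Phi$ maps the ball $\{\|\psi\|\le 2\|h\|\}$ into itself and is a contraction; its fixed point $\psi_\ast$ gives the required small $u=G\psi_\ast\in C^{2,\alpha}_{\delta,\tau}$ solving $(\omega+\ddb u)^n=(\sqrt{-1})^{n^2}\Omega\wedge\overline{\Omega}$ on $\rho^{-1}[A,\infty)$, with $\omega+\ddb u$ positive there.

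The main obstacle is the nonlinear estimate near the singular rays. The quadratic error $\mathcal{N}(u)$ improves the $\rho$-decay (from weight $\delta-2$ to $2\delta-4$) but \emph{worsens} the $w$-weight (from $\tau-2$ to $2\tau-4$), so it is not automatic that $\mathcal{N}(u)$ returns to $C^{0,\alpha}_{\delta-2,\tau-2}$; the dangerous region is $R\lesssim\rho^{1/d}$, where $\omega$ is modeled on $\mathbf{C}$ times a $\rho^{2/d}$-scaled copy of $\omega_{V_1}$ and $w$ degenerates. The resolution is precisely the bound $\eta(A)\to 0$: the excess weight $\rho^{\delta-2}w^{\tau-2}$ carried by the extra Hessian factor is uniformly small on $\{\rho>A\}$ exactly because $\delta<2/d$ (with $\tau$ close to $0$), so the quadratic term is a genuine gain and a plain contraction mapping suffices, with no graded iteration or extra structure needed. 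A secondary, routine point is verifying that $\mathcal{N}$ maps the weighted Hölder spaces into themselves --- the composition-with-a-smooth-function estimates --- which again needs $|g^{-1}\ddb u|_\omega$ uniformly small, i.e. $A$ large.
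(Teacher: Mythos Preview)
Your proof is correct and follows essentially the same contraction-mapping route as the paper: both use the right inverse of $\Delta$ from Proposition~\ref{prop:invert}, both exploit that $\|h\|_{C^{0,\alpha}_{\delta-2,\tau-2}(\rho>A)}\to 0$ as $A\to\infty$ (after enlarging $\delta$ slightly), and the crucial point in each is that $\rho^{\delta-2}w^{\tau-2}$ is bounded on $\{\rho>A\}$---equivalently $\|u\|_{C^{2,\alpha}_{2,2}}\le C\|u\|_{C^{2,\alpha}_{\delta,\tau}}$ in the paper's phrasing, or your $\eta(A)\to 0$---which is exactly where $\delta<2/d$ and $\tau$ near $0$ enter. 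The only cosmetic difference is that the paper iterates directly in $u$ on a ball of fixed radius $\epsilon_0$, while you iterate in $\psi=\Delta u$ on a ball of radius $2\|h\|$; the underlying estimates are identical.
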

\begin{proof}
  Note first that if $\Vert u\Vert_{C^{2,\alpha}_{\delta,\tau}} <
  \epsilon$, then by the definition of the weighted norms we have
  \[ |\nabla^2 u|_\omega < C\epsilon \rho^{\delta-2} w^{\tau-2}. \]
  Since $w > C^{-1} \rho^{1/d-1}$ this implies 
  \[ |\nabla^2 u|_\omega < C\epsilon \rho^{\delta - 2/d -
    \tau(1-1/d)}. \]
  If $\tau$ is close to zero, and $\delta < 2/d$, then for
  sufficiently small $\epsilon$, the form $\omega+\ddb u$ defines a
  metric uniformly equivalent to $\omega$. This is the reason for our
  requirement that $\delta < 2/d$ in
  Proposition~\ref{prop:decayest2}. 

  Let us define
  \[ \mathcal{B} = \{ u\in C^{2,\alpha}_{\delta, \tau}\,:\, \Vert
  u\Vert_{C^{2,\alpha}_{\delta, \tau}} \leq \epsilon_0\}, \]
  where $\epsilon_0$ is sufficiently small so that $\omega + \ddb u$
  is uniformly equivalent to $\omega$. 
  Let us define the operator
  \[ F :  \mathcal{B} &\to C^{0,\alpha}_{\delta-2,
    \tau-2}(\rho^{-1}[A,\infty))\\
  u&\mapsto \left. \log\frac{(\omega+ \ddb u)^n}{(\sqrt{-1})^{n^2}\Omega\wedge
      \overline{\Omega}}\right|_{\rho^{-1}[A, \infty)}.\]
  Our goal is to find $u\in\mathcal{B}$ such that $F(u)=0$. 

  Let us write
  \[ \label{eq:Qdef} F(u) = F(0) + \Delta_\omega u + Q(u) \]
  for a suitable nonlinear operator $Q$. Denoting by $P$ the right inverse
  for $\Delta$ found in Proposition~\ref{prop:invert}, it is enough to
  solve
  \[ u = P( -F(0) - Q(u)), \]
  i.e. we are looking for a fixed point of the map $N(u) =
  P(-F(0)-Q(u))$. Note that we have a uniform bound for $P$
  independent of $A$ (for sufficiently large $A$), since the right
  inverse for one choice of $A$ also provides a right inverse for all
  larger choices of $A$. In addition either from an explicit formula
  for $Q$, or from differentiating Equation~\eqref{eq:Qdef} and
  estimating the difference $\Delta_{\omega+\ddb u} -
  \Delta_{\omega+\ddb v}$, 
  we see that as long as $u, v\in \mathcal{B}$  we have the estimate
  \[ \Vert Q(u) - Q(v)\Vert_{C^{0,\alpha}_{\delta-2, \tau-2}} \leq C
  (\Vert u\Vert_{C^{2,\alpha}_{2,2}} + \Vert
  v\Vert_{C^{2,\alpha}_{2,2}}) \Vert u -
  v\Vert_{C^{2,\alpha}_{\delta,\tau}}. \]
  For this note that the $C^{2,\alpha}_{2,2}$ norms of $u$ controls 
  $\ddb u$ in $C^{0,\alpha}_{0,0}$, which in turn is the
  difference between the metrics $\omega$ and $\omega + \ddb u$. 

  It follows that there is a constant $\epsilon_1$ such that 
  $N$ is a contraction on $\mathcal{B}$ as long as 
  \[ \Vert u\Vert_{C^{2,\alpha}_{2,2}} < \epsilon_1 \]
  for all $u\in \mathcal{B}$. This holds if $\epsilon_0$ in the
  definition of $\mathcal{B}$ is sufficiently small, since as above we have
  \[ \rho^{\delta} w^\tau \leq C \rho^{\delta-2 + (\tau-2)(1/d-1)}
  \rho^2 w^2, \]
  which implies 
  \[ \Vert u\Vert_{C^{2,\alpha}_{2,2}} \leq C \Vert
  u\Vert_{C^{2,\alpha}_{\delta,\tau}}. \]
  We can therefore assume that $\Vert N(u) - N(v)\Vert < \frac{1}{2}
  \Vert u-v\Vert$ for $u,v\in \mathcal{B}$. 
  
  Finally we just have to ensure that $N$ maps $\mathcal{B}$ to
  itself. For this first note that by the estimates of
  Proposition~\ref{prop:decayest2} we have $F(0) \in
  C^{0,\alpha}_{\delta'-2, \tau-2}$ for some $\delta' < \delta$ that is
  sufficiently close to $\delta$. It follows that
  \[ \Vert F(0)\Vert_{C^{0,\alpha}_{\delta-2,
      \tau-2}(\rho^{-1}[A,\infty))} < CA^{\delta'-\delta}, \]
  which can be made arbitrarily small by choosing $A$ large. 
  Next,  we have that if $u\in \mathcal{B}$, then 
  \[ \Vert N(u) \Vert_{C^{2,\alpha}_{\delta, \tau}} &\leq \Vert N(0)
  \Vert_{C^{2,\alpha}_{\delta, \tau}} + \Vert N(u) - N(0)
  \Vert_{C^{2,\alpha}_{\delta,\tau}} \\
  &\leq C \Vert
  F(0)\Vert_{C^{0,\alpha}_{\delta-2,\tau-2}(\rho^{-1}[A,\infty))} +
  \frac{1}{2}\Vert u\Vert_{C^{2,\alpha}_{\delta, \tau}} \\
  &\leq CA^{\delta'-\delta} +  \frac{\epsilon_0}{2}. \]
  For sufficiently large $A$ we therefore have $N(u)\in
  \mathcal{B}$. Therefore we can find a fixed point of $N$, as required.
\end{proof}

We can now complete the proof of Theorem~\ref{thm:1}, by applying
Hein~\cite[Proposition 4.1]{HeinThesis} to the metric $\tilde{\omega}
= \omega + \ddb u$ constructed in the previous proposition. Note that
by elliptic regularity $\tilde{\omega}$ is actually
smooth. Since $\tilde{\omega}$ is asymptotically a small perturbation
of $\omega$, Proposition~\ref{prop:tancone} shows that the tangent
cone at infinity of $(X_1,\tilde{\omega})$ is the cone $X_0$.  
To apply Hein's result we need to check that $(X_1, \tilde{\omega})$
satisfies the condition $SOB(2n)$ (see \cite[Definition
3.1]{HeinThesis}), and it has a $C^{3,\alpha}$
quasi-atlas. The condition $SOB(2n)$ includes the connectedness of
certain annuli on $X_1$, but for the proof it is actually 
enough to check the RCA condition
used in Degeratu-Mazzeo~\cite{DM14} for instance. Since
$\tilde{\omega}$ is uniformly equivalent to $\omega$, this condition
holds by Proposition~\ref{prop:RCA}. To check that
$(X_1,\tilde{\omega})$ satisfies $SOB(2n)$ it is then enough to show
that for a constant $C>0$ and any $s > C$ the volume of the ball
$B(x,s)$ satisfies
\[ C^{-1} s^{2n} < \mathrm{Vol}(B(x,s)) < Cs^{2n}, \]
for any $x\in X_1$. This can be seen for instance by using Colding's
volume convergence theorem~\cite{Colding} under Gromov-Hausdorff limits,
and by noting that $\tilde{\omega}$ is Ricci flat outside of a compact
set, and has a tangent cone at inifinity that is non-collapsed
(i.e. has Euclidean volume growth). 

The existence of a $C^{3,\alpha}$ quasi-atlas, i.e. charts
of a uniform size around each point in which the metric is controlled
in $C^{3,\alpha}$, can also be seen using Propositions~\ref{prop:R>L},
\ref{prop:R<L}. These results show that for the metric $\omega$ we
actually have charts of size $\rho w$ around any point, in which the
metric is controlled in $C^{k,\alpha}$. Note that since $w >
\kappa^{-2}\rho^{1/d-1}$ we have $\rho w > \kappa^{-2}\rho^{1/d}$,
which goes to infinity as $\rho\to\infty$. When we perturb the metric
to $\tilde{\omega}$, then by our construction we a priori only control
$\tilde{\omega}$ in $C^{2,\alpha}$ in these charts, but elliptic
regularity allows us to improve this to $C^{k,\alpha}$. 

We can therefore apply Proposition 4.1 from Hein~\cite{HeinThesis} to
further perturb $\tilde{\omega}$ to a global Calabi-Yau metric on
$X_1$. This perturbation does not change the tangent cone at infinity,
and so we obtain the required Calabi-Yau metric on $\mathbf{C}^n \cong
X_1$ with tangent cone $X_0 = \mathbf{C}\times V_0$ at infinity.

\section{Calabi-Yau metrics in a neighborhood of an isolated
  singularity} \label{sec:singularity}
In this section we show that the ideas developed in the previous
sections can also be used to construct Calabi-Yau metrics on
neighborhoods of certain isolated singularities $(X_1, 0)$, whose
tangent cone at the singularity has singular cross section, as described in the
introduction. This is a generalization of unpublished work of
Hein-Naber~\cite{HNpreprint}, who considered the case when $X_1$ is the hypersurface
\[ \label{eq:HNeq} z^p + x_1^2 + \ldots + x_n^2 = 0, \]
with $p > 2\frac{n-1}{n-2}$. 

We consider the situation where $X_1 \subset
\mathbf{C}^{n+1}$ is the hypersurface
\[ z^p + f(x_1,\ldots, x_n) = 0, \]
where as before, $V_0 = f^{-1}(0) \subset \mathbf{C}^n$ admits a Calabi-Yau
cone metric. As before, we let the degree of $f$ be $d$
for the homothetic action with
 weight $w=(w_1,\ldots, w_n)$ on $x$, and this time we
require that $p > d$. For the hypersurface \eqref{eq:HNeq} the
condition $p > d$ coincides with Hein-Naber's condition $p >
2\frac{n-1}{n-2}$. Our goal is to construct a Calabi-Yau metric in
a neighborhood of the singular point $0\in X_1$, whose tangent cone at
0 is $\mathbf{C}\times V_0$. 

As before, we have the nowhere vanishing holomorphic $n$-form
\[ \Omega = \frac{dz\wedge dx_2 \wedge\ldots \wedge
    dx_n}{ \partial_{x_1}f} \]
on $X_1$, and the first step is to write down a metric $\omega$ on
$X_1$ whose Ricci potential
\[ h = \log\frac{\omega^n}{(\sqrt{-1})^{n^2}\Omega\wedge \overline{\Omega}} \]
decays in a suitable weighted space near the origin. The definition
of $\omega$ is completely analogous to \eqref{eq:approx1}, given by
\[ \omega = \ddb\Big(|z|^2 + \gamma_1(R\rho^{-\alpha})r^2 +
  \gamma_2(R\rho^{-\alpha}) |z|^{2p/d} \phi(z^{-p/d}\cdot x)\Big), \]
where this time we choose $\alpha \in (1, p/d)$, and $\gamma_i,\rho, R$ are
just as before. Note that the potential is asymptotic to $|z|^2 +
r^2$ as $\rho\to 0$

In analogy with Proposition~\ref{prop:decayest2} we have the
following. 
\begin{prop}\label{prop:decayest3}
  The form $\omega$ defines a metric on $X_1\setminus \{0\}$ on
  the set where $\rho < P^{-1}$
  for sufficiently large $P$. In addition we can find a
  weight $\delta > 2p/d$ for which the Ricci potential $h$ of $\omega$ satisfies
  \[ |\nabla^i h|_{\omega} < \begin{cases} 
      C_i\rho^{\delta-2-i}\, \text{ if } R > \kappa\rho, \\
      C_i\rho^\delta R^{-2-i}\, \text{ if } R\in
      (\kappa^{-1}\rho^{p/d}, \kappa\rho),\\
      C_i \rho^{\delta-2p/d - ip/d}\, \text{ if } R <
        \kappa^{-1}\rho^{p/d}, \end{cases}
    \]
   for suitable $\kappa, C_i > 0$. Recall that by Lemma~\ref{lem:d>2} we can
   assume $d > 2$. 
\end{prop}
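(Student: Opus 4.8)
The plan is to run the proof of Proposition~\ref{prop:decayest2} essentially verbatim, making the following substitutions: the exponent $1$ in the equation $z+f(x)=0$ is replaced by $p$; the powers $\rho^{1/d}$ are replaced by $\rho^{p/d}$; the admissible range $\alpha\in(1/d,1)$ for the gluing exponent is replaced by $\alpha\in(1,p/d)$, which is nonempty precisely because $p>d$; and ``decay as $\rho\to\infty$'' becomes ``decay as $\rho\to 0$'', so that the upper constraint $\delta<2/d$ there turns into the lower constraint $\delta>2p/d$ here. Concretely I would split $X_1\cap\{\rho<P^{-1}\}$ into the five regions: (I) $R>\kappa\rho$; (II) $R\in(K/2,2K)$ with $4\rho^\alpha<K<\kappa\rho$; (III) the gluing region $K\in(\rho^\alpha,2\rho^\alpha)$; (IV) $R\in(K/2,2K)$ with $\kappa^{-1}\rho^{p/d}<K<\rho^\alpha/2$; and (V) $R<2\kappa^{-1}\rho^{p/d}$. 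In regions I--III the cutoffs reduce $\omega$ to $\ddb(|z|^2+r^2)$ (a blend of this with the rescaled $V_1$-potential where $\gamma_1,\gamma_2$ are non-constant, in region III) and I would compare $X_1$ with $X_0=\mathbf{C}\times V_0$ using the local holomorphic charts of Proposition~\ref{prop:hest1}; in regions IV--V the cutoffs reduce $\omega$ to $\ddb(|z|^2+|z|^{2p/d}\phi(z^{-p/d}\cdot x))$ and I would compare $X_1$ with a suitably rescaled copy of $\mathbf{C}\times V_1$, the $V_1$-factor being rescaled by $|z_0|^{p/d}$ through coordinates $\tilde z=z_0^{-p/d}(z-z_0)$, $\tilde x=z_0^{-p/d}\cdot x$.

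The changes in the arithmetic are the following. In region I, writing $\rho\in(D/2,2D)$ and scaling the metric by $D^{-2}$, the equation of $X_1$ becomes $D^{p-d}\tilde z^p+f(\tilde x)=0$ in the coordinates $\tilde z=D^{-1}z$, $\tilde x=D^{-1}\cdot x$, an $O(D^{p-d})$ perturbation of $f(\tilde x)=0$; arguing as in Propositions~\ref{prop:hest1} and~\ref{prop:decayest2} this gives $|\nabla^i h|_\omega\le C_iD^{p-d-i}$, so any $\delta<p-d+2$ is admissible, and since $d>2$ and $p>d$ one has $2p/d<p-d+2$. In region II, scaling by $K^{-2}$, the equation $K^{-d}(K\tilde z+z_0)^p+f(\tilde x)=0$ is an $O(K^{-d}|z_0|^p)=O(K^{-d}D^p)$ perturbation of that of $X_0$, giving $|\nabla^i h|_\omega\le C_iK^{-d-i}D^p$, and since $4\rho^\alpha<K<\kappa\rho$ this is bounded by $C_i\rho^\delta R^{-2-i}$ whenever $\delta<p+\alpha(2-d)$, which for $\alpha\in(1,p/d)$ exceeds $2p/d$. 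In region V, scaling by $|z_0|^{2p/d}$, the equation becomes $(z_0^{p/d-1}\tilde z+1)^p+f(\tilde x)=0$, an $O(D^{p/d-1})$ perturbation of $\mathbf{C}\times V_1$, and the corresponding K\"ahler potentials differ by $O(D^{p/d-1})$ as well, so $|\nabla^i h|_\omega\le C_iD^{p/d-1-ip/d}$ and $\delta<3p/d-1$ is admissible, again exceeding $2p/d$. Regions III and IV are the gluing and transition regions: there, besides the equation-mismatch errors above, one has the additional K\"ahler-potential error governed by~\eqref{eq:phiestimate} in the rescaled radial variable $K|z_0|^{-p/d}$ (respectively $1$ in region V), handled exactly as in the proof of Proposition~\ref{prop:decayest2}. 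Positivity of $\omega$ on $\{\rho<P^{-1}\}$ for $P$ large is automatic, since after the appropriate rescaling $\omega$ is, in each region, a small perturbation of a genuine cone or product K\"ahler metric.

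The main obstacle, just as in Proposition~\ref{prop:decayest2}, is the exponent bookkeeping in regions III and IV, where the equation-mismatch error and the K\"ahler-potential error must simultaneously be absorbed into the claimed weights, and where one must verify that the resulting constraints on $\delta$ are jointly compatible with the lower bound $\delta>2p/d$ required for the perturbation argument (the analogue of the condition $\delta<2/d$ in Theorem~\ref{thm:1}). This works because $p>d>2$, which is exactly what makes all the relevant exponent inequalities strict; one only uses $c>0$ from~\eqref{eq:phiestimate}, and the sharper $c>1$ (valid when $d>3$) would be needed only for a stronger conclusion $\delta<0$, which is not asserted here. No analytic input beyond~\eqref{eq:phiestimate} and Lemma~\ref{lem:d>2} is required.
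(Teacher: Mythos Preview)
Your proposal is correct and follows the paper's own proof essentially verbatim: the same five regions, the same rescalings, the same comparison models ($X_0$ in I--III, a rescaled $\mathbf{C}\times V_1$ in IV--V), and the same exponent bookkeeping, with the key inequalities $p>d>2$ used exactly where you say. One small imprecision: in Region~IV the paper scales by $K^{-2}$ (as in Regions~II--III) rather than by $|z_0|^{-2p/d}$, so the comparison is to $\mathbf{C}\times V_{K^{-d}z_0^p}$ at unit scale and the equation-mismatch error is $O(K^{1-d}D^{p-1})$; since you say you would follow Proposition~\ref{prop:decayest2} there, this is presumably what you intend.
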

\begin{proof}
  The proof of these estimates is very similar to the proof of
  Proposition~\ref{prop:decayest2}, estimating the Ricci potential by
  comparing $\omega$ to various model metrics in different regions, by
  scaling. We will keep the notation the same as in the proof of
  Proposition~\ref{prop:decayest2} to make the similarities
  apparent. The main difference is that now we are interested in
  estimating the errors as $\rho \to 0$ rather than $\rho \to
  \infty$. 
  
  \bigskip
  {\bf Region I}: Consider the region where $R > \kappa\rho$, and
  $\rho\in (D/2, 2D)$ for small $D$. We scale the metric to
  $D^{-2}\omega$ and use coordinates
  \[ \tilde{z} &= D^{-1}z, \quad \tilde{x} = D^{-1}\cdot x, \quad
    \tilde{r} = D^{-1}r. \]
  We have
  \[ D^{-2}\omega = \ddb( |\tilde{z}|^2 + \tilde{r}^2), \]
  and the equation of $X_1$ is
  \[ D^{p-d} \tilde{z}^p + f(\tilde{x}) = 0. \]
  As before, we obtain 
  \[ |\nabla^i h|_{D^{-2}\omega} \leq C_i D^{p-d}. \]
  If $p > d > 2$, then $p-d > 2p/d - 2$, and so we can choose $\delta
  > 2p/d$ satisfying $D^{p-d} < D^{\delta - 2}$. 

  \bigskip
  {\bf Region II}: Here $R\in (K/2, 2K)$ for some $K\in (4\rho^\alpha,
  \kappa\rho)$, and $\rho\in (D/2, 2D)$. Let $z$ be close to $z_0$
  such that $|z_0|\sim D$. In terms of coordinates
  \[ \tilde{z} = K^{-1}(z-z_0), \quad \tilde{x} = K^{-1}\cdot x, \quad
    \tilde{r} = K^{-1} r, \]
  we have
  \[ K^{-2}\omega = \ddb(|\tilde{z}|^2 + \tilde{r}^2), \]
  and the equation of $X_1$ is
  \[ K^{-d}(K\tilde{z}+z_0)^p + f(\tilde{x}) = 0, \]
  where $|\tilde{z}| \leq 1$. 
  Arguing as before, when we compare this to the equation
  $f(\tilde{x})=0$ we obtain an error of order $K^{-d}D^p$. This implies
  \[ |\nabla^i h|_{K^{-2}\omega} \leq C_i K^{-d}D^p = C_i K^{2-d}D^p K^{-2}. \]
  On this region $K > 4\rho^\alpha$ (and $d > 2$), so we have
  \[ K^{2-d}D^p < D^{p + (2-d)\alpha}. \]
  Since $\alpha < p/d$ we have $p + (2-d)\alpha > 2p/d$, so we can
  choose $\delta > 2p/d$. 

  \bigskip
  {\bf Region III}: Here $R\in (K/2, 2K)$ for $K\in (\rho^\alpha,
  2\rho^\alpha)$, and $\rho\in (D/2, 2D)$. We consider $z$ close to
  $z_0$, and since in this region $\rho$ is comparable to $|z|$, we
  have $|z_0| \sim D$. With the same scaling as in Region II, we have
  \[ K^{-2}\omega = \ddb\Big( |\tilde{z}|^2 + \gamma_1 \tilde{r}^2 +
    \gamma_2 K^{-2}|K\tilde{z}+z_0|^{2p/d}\phi\big((K\tilde{z}+z_0)^{-p/d}K\cdot
    \tilde{x}\big)\Big). \]
  The equation of $X_1$ is
  \[ K^{-d}(K\tilde{z}+z_0)^p + f(\tilde{x}) =0.\]
  As in Proposition~\ref{prop:decayest2} we compare this to the metric 
  \[ \ddb(|\tilde{z}|^2 + \tilde{r}^2) \]
  on $X_0$ with equation $f(\tilde{x})=0$. To compare the potentials,
  similarly to \eqref{eq:vv1} we have
  \[ \nabla^i\Big[ K^{-2}|K\tilde{z} + z_0|^{2p/d}
     \phi((K\tilde{z}+z_0)^{-p/d}K\cdot \tilde x) - \tilde{r}^2 \Big]=
     O\left( (K^{-1}D^{p/d})^{2+c}\right). \]

   Arguing as before we have an error of order $K^{-d}D^p$ from
   comparing the two  equations, which is bounded in the same way as
   in Region II.  Since in this region $K\sim D^\alpha$,
   the new error from comparing the K\"ahler potentials
   satisfies
   \[ (K^{-1}D^{p/d})^{2+c} = K^{-c}D^{\frac{p}{d}(2+c)}K^{-2} =
     D^{2\frac{p}{d} + c\left(\frac{p}{d}-\alpha\right)}K^{-2}. \]
   We need $\delta$ so that this is bounded by $D^\delta K^{-2}$ (as
   $D\to 0$).  Since $\alpha < p/d$, we can choose $\delta > 2p/d$. 
   \bigskip
   
  {\bf Region IV}: Here $R\in (K/2, 2K)$ with $K\in
  (\kappa^{-1}\rho^{p/d}, \rho^\alpha/2)$, and $\rho\in (D/2,2D)$. We
  choose $z$ close to $z_0$, with $|z_0|\sim D$. We scale the same way
  as in Regions II, III. As before, we have
  \[ K^{-2}\omega = \ddb\Big( |\tilde{z}|^2 +
    K^{-2}|K\tilde{z}+z_0|^{2p/d}\phi\big((K\tilde{z}+z_0)^{-p/d}K\cdot
    \tilde{x}\big)\Big), \]
  and the equation of $X_1$ is
  \[ K^{-d}(K\tilde{z}+z_0)^p + f(\tilde{x}) = 0. \]
  Similarly to the proof of
  Proposition~\ref{prop:decayest2} we now compare $K^{-2}\omega$ to 
  the product metric
  on $\mathbf{C}\times V_{K^{-d}z_0^p}$ with equation
  \[ K^{-d}z_0^p + f(\tilde{x}) = 0. \]
  The error given by the difference of the equations is of order
  $K^{1-d}z_0^{p-1} = O(K^{1-d}D^{p-1})$.

  Let us denote by $E$ the difference in K\"ahler potentials,
  \[ E = K^{-2}|K\tilde{z}+z_0|^{2p/d}\phi\big((K\tilde{z}+z_0)^{-p/d}K\cdot
    \tilde{x}\big) - K^{-2}|z_0|^{2p/d} \phi( Kz_0^{-p/d}\cdot
    \tilde{x}). \]
  Since
  \[ K(K\tilde{z}+z_0)^{-p/d} = z_0^{-p/d}K(1 + O(KD^{-1})), \]
  similarly to \eqref{eq:vv3}, \eqref{eq:vv4}, \eqref{eq:vv5} we get
  \[ |\nabla^i E| < C_i (|z_0|^{-p/d}K)^{-2-c}D^{-1}K =
    O(K^{-1-c}D^{\frac{(2+c)p}{d}-1}). \]
  Combining this with the error of order $K^{1-d}D^{p-1}$, we need
  $\delta > 2p/d$ such that
  \[ K^{1-d}D^{p-1} + K^{-1-c}D^{\frac{(2+c)p}{d}-1} < CD^\delta
    K^{-2}. \]
  This is equivalent to
  \[ K^{3-d}D^{p-1}\big(1 + (KD^{-p/d})^{d-2-c}\big) <
    CD^\delta. \]
  We can assume that $c>0$ is small, so that $d-2-c > 0$ (in fact from
  the estimate in Proposition~\ref{prop:hest1} we cannot expect that
  $c > d-2$ is possible). Then since $KD^{-p/d}$ is bounded away from
  zero, it is enough to choose $\delta$ so that
  \[ K^{3-d}D^{p-1}(KD^{-p/d})^{d-2-c} < CD^\delta, \]
  i.e.
  \[ K^{1-c}D^{\frac{(2+c)p}{d}-1} < CD^\delta. \]
  Since $K < D$, in order to be able to choose $\delta > 2p/d$, we need
  \[ 1-c + \frac{(2+c)p}{d}-1 > \frac{2p}{d}. \]
  This is equivalent to $p>d$, which holds in our setting.  
  \bigskip
  
 {\bf Region V}: Here $R < 2\kappa^{-1} \rho^{p/d}$, $\rho\in
 (D/2,2D)$, and $z$ is close to $z_0$ satisfying $|z_0|\sim D$.
 We rescale the metric by $|z_0|^{p/d}$. We introduce new coordinates 
  \[ \tilde{z} = z_0^{-p/d}(z-z_0), \quad \tilde{x} = z_0^{-p/d}\cdot x, \quad
    \tilde{r} = |z_0|^{-p/d}r, \]
   so that $|\tilde{z}| < 1$.  We have
  \[ |z_0|^{-2p/d}\omega = \ddb \Big[ |\tilde{z}|^2 +
    |z_0^{p/d}\tilde{z} + z_0|^{2p/d}|z_0|^{-2p/d} \phi\big(
    (z_0^{p/d}\tilde{z}+z_0)^{-p/d} z_0^{p/d} \cdot \tilde{x}\big)\Big], \]
  and the equation of $X_1$ is
  \[ z_0^{-p}(z_0^{p/d}\tilde{z}+z_0)^p + f(\tilde{x}) = 0. \]
  Note that 
  \[ z_0^{-p}(z_0^{p/d}\tilde{z}+z_0)^p = 1 + O(z_0^{p/d-1}) = 1 +
    O(D^{p/d-1}), \]
  and so comparing to $\mathbf{C}\times V_1$ with equation
  $1+f(\tilde{x})=0$ we introduce an error of order $D^{p/d-1}$.

  The difference in the corresponding K\"ahler potentials is
  \[ E = |z_0^{p/d}\tilde{z} + z_0|^{2p/d}|z_0|^{-2p/d} \phi\big(
    (z_0^{p/d}\tilde{z}+z_0)^{-p/d} z_0^{p/d} \cdot \tilde{x}\big) -
    \phi(\tilde{x}). \]
  Since
  \[ (z_0^{p/d}\tilde{z}+z_0)^{-p/d} z_0^{p/d} = 1 + O(z_0^{p/d-1}) =
    1 + O(D^{p/d-1}), \]
  we have $|\nabla^i E|_{|z_0|^{-2p/d}\omega} =O(D^{p/d - 1})$. 
  Therefore we need to be able to choose $\delta > 2p/d$ for which
  \[ D^{p/d-1} < CD^{\delta - 2p/d}. \]
  This is possible, since we assumed $p/d > 1$. 
\end{proof}

Abusing notation, let us now denote by 
$\omega$ a metric on $X_1\setminus \{0\}$ which agrees with the
$\omega$ constructed above on a neighborhood of $0$.  
The rest of the proof of Theorem~\ref{thm:2} is essentially the same
as the proof of Proposition~\ref{prop:perturbed}, in order to find a
$u$ such that
\[ \label{eq:e2} (\omega + \ddb u)^n = (\sqrt{-1})^{n^2} \Omega \wedge \overline{\Omega} \]
on the set where $\rho < A^{-1}$ for sufficiently large $A$. The key
ingredient is to find a right inverse for the Laplacian
\[ \label{eq:DD2}
  \Delta_\omega : C^{k,\alpha}_{\delta,\tau}(\rho^{-1}(0, A^{-1}]) \to
C^{k-2,\alpha}_{\delta-2, \tau-2}(\rho^{-1}(0, A^{-1}]), \]
for sufficiently large $A$,
where the weighted spaces are defined analogously to before. Here we
use smooth weight functions $\rho, w$ satisfying that $\rho^2$ agrees
with $|z|^2 + R^2$ near the origin (this is essentially the radial
distance from the singular point), and
\[ w = \begin{cases} 1\, &\text{ if } R > 2\kappa\rho, \\ 
  R / (\kappa \rho)\, &\text{ if } R\in (\kappa^{-1}\rho^{p/d},
  \kappa\rho), \\
  \kappa^{-2} \rho^{p/d-1}\, &\text{ if } R < \frac{1}{2}
  \kappa^{-1}\rho^{p/d}. \end{cases} 
\]
 The reason for the definition of $w$
is that near the singular rays $\mathbf{C}\times \{0\}$ our metric
$\omega$ is now modeled on $D^{2p/d}\omega_{V_1}$, transverse to the
$\mathbf{C}$-factor.

We have results analogous to
Propositions~\ref{prop:R<L} and \ref{prop:R>L}. Here we consider two
types of regions:
\[ \mathcal{U} = \{\rho < A^{-1}, R > \Lambda\rho^{p/d}\}, \]
for large $A, \Lambda$, and
\[ \mathcal{V} = \{ |z-z_0| < B|z_0|^{p/d}, \rho < A^{-1}, R <
  \Lambda\rho^{p/d}\}, \]
where $z_0, B$ are viewed as fixed. In an identical way to
Propositions~\ref{prop:R<L} and \ref{prop:R>L} we can define maps
$G:\mathcal{U}\to X_0$ and $H:\mathcal{V}\to \mathbf{C}\times V_1$. 
The Riemannian metrics $g, g_{X_0}, g_{\mathbf{C}\times V_1}$
defined by $\omega, \omega_0$ and the product metrics on
$\mathbf{C}\times V_1$ then satisfy
\[  \Vert G^*g_{X_0} - g\Vert_{C^{k,\alpha}_{0,0}} < \epsilon \]
if $\Lambda > \Lambda(\epsilon)$ and $A > A(\epsilon)$, and
\[ \Vert |z_0|^{2p/d} H^* g_{\mathbf{C}\times V_1} -
  g\Vert_{C^{k,\alpha}_{0,0}} < \epsilon, \]
for any $z_0, B, \Lambda$, once $A > A(\epsilon, \Lambda, B)$.

Just as in Proposition~\ref{prop:tancone}, these estimates imply that
the tangent cone of $(X_1,\omega)$ at 0 is given by the cone $X_0 =
\mathbf{C}\times V_0$. Moreover 
the existence of a right inverse for the Laplacian in \eqref{eq:DD2}
once $A$ is sufficiently large follows exactly the argument from
Section~\ref{sec:Laplaceinverse}. 

Proposition~\ref{prop:decayest3} implies that the Ricci potential $h$
of $\omega$ satisfies $h \in C^{0,\alpha}_{\delta'-2, \tau-2}$ for some
$\delta' > 2p/d$, and any $\tau < 0$. If follows that for slightly
smaller $\delta > 2p/d$ we have
\[ \Vert h\Vert_{C^{0,\alpha}_{\delta-2, \tau-2}(\rho^{-1}(0,A^{-1}])} <
CA^{\delta-\delta'}, \]
which can be made arbitrarily small by choosing $A$ large.  We can now
follow the proof of Proposition~\ref{prop:perturbed} to solve 
Equation~\ref{eq:e2} with $u\in \mathcal{B}$, where
\[ \mathcal{B} = \{ u\in C^{2,\alpha}_{\delta, \tau}\,:\, \Vert
  u\Vert_{C^{2,\alpha}_{\delta,\tau}} < \epsilon_0 \}, \]
for sufficiently small $\epsilon_0$, with $\tau < 0$ very close to 0. 
As in the proof of Proposition~\ref{prop:perturbed}, the key estimate
that we need is that
\[ \Vert u \Vert_{C^{2,\alpha}_{2,2}(\rho^{-1}(0, A^{-1}])} \leq C \Vert
u\Vert_{C^{2,\alpha}_{\delta, \tau}(\rho^{-1}(0, A^{-1}])}.  \]
This follows since the inequality  $w > C^{-1}
\rho^{p/d-1}$, together with $\delta > 2p/d$ implies
\[ \rho^\delta w^\tau < C\rho^2 w^2. \]
In particular if $u\in \mathcal{B}$ with sufficiently small
$\epsilon_0$, then $\omega + \ddb u$ is uniformly equivalent to
$\omega$. The rest of the argument is identical to the proof of
Proposition~\ref{prop:perturbed}. Finally, the tangent cone of $\omega + \ddb
u$ at 0 agrees with the tangent cone of $\omega$, since if $u\in
C^{2,\alpha}_{\delta, \tau}$ for our choices of $\delta, \tau$, then
$|\ddb u|_\omega\to 0 $ as $\rho\to 0$.

\end{document}